\newtheorem{pro}{Proposition}[section]
\newtheorem{lem}[pro]{Lemma}
\newtheorem{exa}[pro]{Example}
\newtheorem{theo}[pro]{Theorem}
\newtheorem{defi}[pro]{Definition}
\newtheorem{cor}[pro]{Corollary}
\newtheorem{remk}[pro]{Remark}
\newcommand{\ep}{\varepsilon}
\newcommand{\al}{\alpha}
\newcommand{\vp}{\varphi}
\newcommand{\la}{\lambda}
\newcommand{\sun}{\odot}
\newcommand{\lra}{\longrightarrow}
\newcommand{\lmt}{\longmapsto}
\newcommand{\nrm}[1]{\mbox{ $ \displaystyle \left\| {#1} \right\| $} }
\newcommand{\nri}[1]{\mbox{ $ \nrm{ {#1} }_{\infty} $} }
\newcommand{\fk}[1]{ \left( {#1} \right) }
\newcommand{\bk}[1]{ \left\{ {#1} \right\} }
\newcommand{\btr}[1]{\mbox{ $ \left| {#1} \right| $ }}
\newcommand{\ce}{{\bf\Bbb C}}
\newcommand{\topowsOT}{{w^*}}
\newcommand{\re}{{\bf\Bbb R}}
\newcommand{\rep}{{\bf\Bbb R^+}}
\newcommand{\za}{{\bf\Bbb N}}
\newcommand{\jz}{{\bf\Bbb J}}
\newcommand{\topoks}{{w^*}}
\newcommand{\scS}{{\mathcal{S}}}
\newcommand{\scST}{{\mathcal{T}}}
\newcommand{\scSTO}{{\overline{\scST}^{\topoks}}}
\newcommand{\scSTS}{{\scST^{\sun}}}
\newcommand{\scSTOS}{\overline{\scSTS}^{\topoks}}
\newcommand{\scTO}{{\overline{\mbox{co}}^{\topoks}\scST}}
\newcommand{\scTOS}{{\overline{\mbox{co}}^{\topoks}\scSTS}}
\newcommand{\scATO}{{\overline{\mbox{ac}}^{\topoks}\scST}}
\newcommand{\LT}{{L_{\scST}}}
\newcommand{\scJ}{{\mathcal{J}}}
\newcommand{\Xsr}{X^{\sun}_{rev}}
\newcommand{\semig}[2]{\bk{{#1}(t)}_{t\in {#2} }}
\newcommand{\seq}[2]{\mbox{$ \bk{ #1_{#2} }_{{#2} \in \za} $} }
\newcommand{\net}[3]{\mbox{$ \bk{ {#1}_{#2} }_{{#2} \in {#3}} $} }
\newcommand{\supp}[1]{ \mbox{ $ {\rm supp} \left\{ {#1} \right\} $ } }
\newcommand{\ilm}[1]{  \lim_{ {#1} \to \infty}  }
\newcommand{\netlim}[2]{  \lim_{ {#1}\in {#2}}  }
\newcommand{\Funk}[5]{ \begin{array}{ccccc}
                       {#1} & : & {#2} & \lra & {#3} \\
                            &   & {#4} & \lmt & \displaystyle{#5} 
                       \end{array}                       }
\newcommand{\funk}[3]{ \begin{array}{ccccc}
                       {#1} & : & {#2} & \lra & {#3}
                       \end{array}                       }
\begin{document}
\title{Compactification of bounded semigroup representations}
\author{Josef Kreulich, \\ Universit\"at Duisburg-Essen}
\keywords{Right semitopological semigroups, compactification, $C_0-$semigroups, almost periodicity}

\maketitle
\begin{abstract}
This study uses methods to identify compactifications of semigroups $S\subset L(X)$ that reside in the space $L(X).$ These methods generalize, in some sense, the deLeeuw-Glicksberg theory to a greater class of vectors. This provides an abstract approach to several notions of almost periodicity, mainly involving right semitopological semigroups \cite{RuppertLNM} and adjoint theory. Moreover, the given setting is refined to the case of bounded $C_0-$semigroups.
\end{abstract}

\section{Introduction}
The main idea of this work is the result that $L(Y,X^*)$ is a dual space and therefore has a $w^*-$topology. Consequently, for subspaces $Y\subset X^*,$ the operator space $L(Y)\subset L(Y,X^*)$ has a $w^*-$topology. This result applies to several cases, such as sun-dual semigroups $$X^{\sun}:=\bk{x^*\in X^*:\nrm{\cdot}-\lim_{t\to 0}T^*(t)x^*=x^*}\subset X^*$$ and bounded uniformly continuous functions. The cases support results for several notions of almost periodicity and therefore ergodic results as well. The advantage of the compactification residing in the same operator space is that the idempotent becomes a projection on the given space, which serves to split the space into so-called reversible and flight vectors. In this scope, we consider the weak-star topology instead of the weak topology and address, by necessity, the well-known fact that bounded linear operators between dual spaces are
$weak-weak$-continuous but
not necessarily $weak^*-weak^*$-continuous. This approach provides answers based on locally convex theory to enlarge the boundaries within which results on almost periodicity are true. A brief overview of why some results apply is given. 

The used approach extends several splittings, and therefore asymptotic considerations to more general spaces and therefore Cauchy problems considered in \cite{RuessSemi},\cite{RuessInt}, \cite{RuessErgdc}.

\insert\footins{\footnotesize The author thanks Professor Ruess for his suggestions and advice. \\
Josef.Kreulich@gmail.com}

\section{Preliminaries and Notation}

The main literature on functional analysis is \cite{RudinFA} on $C_0-\mbox{semigroups}$; please refer to \cite{HillePhillips}, \cite{Pazy}, and \cite{Nagel}, and for special results on sun-dual semigroups, to \cite{HillePhillips} and \cite{NeervenLNM}. To obtain the main definitions and results of deLeeuw-Glicksberg theory \cite{DeGli1} and \cite{DeGli2}, \cite[pp. 103ff, § 2.4]{Krengel} is sufficient. The results of harmonic analysis are taken from \cite{HewittRoss}, the general topology results on nets from \cite{Kelley}, and the locally convex topological space results from \cite{Jarchow} and \cite{Koethe2}. For the special results on right semitopological semigroups, refer to \cite{RuppertLNM}.

For the notions of locally convex topology, recall that
$\sigma(X,Y)$ is the weakest topology such that the space of continuous linear functionals
$(X,\sigma(X,Y))^*=Y$ (i.e., the $w$-topology =$\sigma(X,X^*)$, and the $w^*$-topology = $\sigma(X^*,X)$).

\section{Dual Representations and Left-Semitopological Semigroups}
In this section we give the general setting, it fit into general theory, and two main results on the compactness of subgroups, and a condition on $\semig{T}{S}$ which implies the abelian structure for the compactification.
We start this section with a remark that points to the main topological ideas of this study. It is the interaction of two dual spaces, the dual $X^*$ itself, and $L(X,X^*).$ This point of view gives lead to structures and ergodic results for the semigroup.

\begin{remk} \cite[ Prop. 3, p.330]{Jarchow} Let $X,Y$ be Banach spaces, and let $L(X,Y)$ be the Banach space of bounded and linear operators.
As $L(X,Y^*)=(X\otimes_{\pi}Y)^*,$ a bounded net $\net{T}{\la}{\Lambda}\subset L(X,Y^*)$ is convergent to $T\in L(X,Y^*)$ in the $w^*-$topology if
$$
\netlim{\la}{\Lambda}<T_{\la}x,y>=<Tx,y> \mbox{ pointwise on } x\in X, \ y \in Y.
$$
For this study, it is only of interest that $B_{L(X,Y^*)}$ is compact with respect to the predefined topology, which can be verified by an application of Tychonov's theorem. This implies that $L(X,Y^*)$ is a dual space, and on $B_{L(X,Y^*)}$, the $w^*-$topology and the predefined topology coincide; compare \cite{Kaiser}.

\end{remk}

As we discuss semigroups with a certain continuity property, we recall

\begin{defi}\cite[p. 8, Definition 1.2, 1.3]{RuppertLNM}
\begin{enumerate}
\item Let $A, B $ and $C$ be topological Hausdorff spaces. A map
$$\Funk{\pi}{A\times B}{C}{(x,y)}{\pi(x,y)} $$
is called right (left) continuous if it is continuous in the right (left) variable; that is, for every fixed $x_0\in X$ ($y_0\in Y$), the map $\bk{y\mapsto \pi(x_0,y)}$ $(\bk{x\mapsto\pi(x,y_0)})$ is continuous.
\item Let $\scS$ be a nonvoid topological space that is provided with an associative multiplication
$$
\Funk{ \mu}{\scS \times \scS}{\scS}{(x,y)}{\mu(x,y)=xy.}
$$
Then, the pair $(\scS,\mu)$ is called a right (left) semitopological semigroup if $\mu$ is right (left) continuous.
\end{enumerate}
\end{defi}

\begin{defi} \cite[p. 103]{Krengel}
A semigroup is called (semi)-topological is a semigroup with unit which is a Hausdorff space in which the multiplication is (separately) jointly continuous.
\end{defi}

Except for the section Ideal Theory, where $S$ is only right(left)-topological and not necessarily Abelian,  we assume $S$ to be an Abelian semitopological semigroup. 

The next definition can be derived from the idea of considering the semigroup as a restriction of a dual operator.
\begin{defi} \label{defi-dual-representation}
For an Abelian topological semigroup $S$, a Banach space $X,$ and $Y\subset X^*$, a set $\semig{T}{S}\subset L(Y)$ is called a dual semigroup representation if
\begin{enumerate}
\item $T(s+t)=T(s)T(t)$;
\item for given $x\in X$ and $y\in Y$,
$$
\Funk{T_y}{S}{\ce}{s}{<x,T(s)y>}
$$
is continuous;
\item $\overline{\bk{T(t)x:t\in S}}^{w^*}\subset Y$;
\item for all $s\in S$ and $y\in Y,$
$$
\Funk{T(s)}{(\overline{ac\bk{T(t)y:t\in S}}^{w^*},w^*)}{(\overline{ac\bk{T(t)y:t\in S}}^{w^*},w^*)}{x}{T(s)x}
$$
is continuous.
\end{enumerate}
\end{defi}

\begin{remk}
Note, that in general the operator need not to be $w^*-w^*-$continuous, and therefore need not to be a restriction of dual operator. The definition mainly weights topological attributes of the orbit against the continuity properties of the operator. For example, choose, $S=\za,$ $T\in L(X^*)$ with $T$ compact, or weakly compact, $\nrm{T}\le 1,$ and $T(n)=T^n.$ Then 
$\overline{ac\bk{T(t)y:t\in S}}^{w^*}=\overline{ac\bk{T(t)y:t\in S}}^{\nrm{\cdot}},$ 
or $=\overline{ac\bk{T(t)y:t\in S}}^w$ and the norm, or the weak topology coincide with the $w^*-$topology.
\end{remk}

\begin{defi}
\begin{enumerate}
\item Let $\semig{T}{S}\subset L(Y)$ be a dual representation of a semigroup $S.$
$y\in Y$ is called reversible if for every net $\net{s}{\al}{A}\subset S$, there exists a net
$\net{t}{\gamma}{\Gamma}\subset S$ such that
$w^*-\netlim{\gamma}{\Gamma}w^*-\netlim{\al}{A}T(t_{\gamma})T(s_{\al})y=y.$ Let $Y_{rev}$ be the set of reversible vectors.
\item $y\in Y$ is called a flight vector if for a net $\net{s}{\al}{A}\subset S, $ we have $w^*-\netlim{\al}{A}T(s_{\al})y=0.$ Let $Y_0$ be the set of flight vectors.
\end{enumerate}
\end{defi}

With the above definition,
the following becomes
straightforward.
\begin{pro} \label{rev-equals-flight-equals-zero}
If $x\in Y$ is a flight vector and reversible, then $x=0.$
\end{pro}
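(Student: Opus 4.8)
The plan is to feed the net that witnesses the flight‑vector property into the reversibility condition, and then to collapse the resulting iterated $w^*$‑limit to $0$ by means of the \emph{localized} $w^*$‑continuity of the operators supplied by part (4) of Definition~\ref{defi-dual-representation}.

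First, since $x\in Y_0$, I would fix a net $\bk{s_\al}_{\al\in A}\subset S$ with $w^*-\netlim{\al}{A}T(s_\al)x=0$, and then apply the definition of a reversible vector to \emph{this} net, obtaining a net $\bk{t_\gamma}_{\gamma\in\Gamma}\subset S$ with
$$
w^*-\netlim{\gamma}{\Gamma}\Bigl(w^*-\netlim{\al}{A}T(t_\gamma)T(s_\al)x\Bigr)=x .
$$
The goal is then to show that each inner limit already equals $0$.

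To that end, set $K:=\overline{ac\bk{T(t)x:t\in S}}^{w^*}$. Each $T(s_\al)x$ lies in the orbit $\bk{T(t)x:t\in S}$, hence in $K$; each $T(t_\gamma)T(s_\al)x=T(t_\gamma+s_\al)x$ again lies in that orbit, hence in $K$; and $0\in K$ as the $w^*$‑limit of the net $\bk{T(s_\al)x}_{\al\in A}\subset K$. So $T(t_\gamma)$ maps $K$ into $K$, and by part (4) of Definition~\ref{defi-dual-representation}, applied with $s=t_\gamma$ and $y=x$, the restriction $T(t_\gamma):(K,w^*)\to(K,w^*)$ is continuous. Since $T(t_\gamma)$ is linear and $T(t_\gamma)0=0$, continuity gives, for every $\gamma\in\Gamma$,
$$
w^*-\netlim{\al}{A}T(t_\gamma)T(s_\al)x=T(t_\gamma)\Bigl(w^*-\netlim{\al}{A}T(s_\al)x\Bigr)=T(t_\gamma)0=0 .
$$
Feeding this back into the displayed identity above gives $x=w^*-\netlim{\gamma}{\Gamma}0=0$.

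The only genuine point to watch is the one highlighted in the Remark after Definition~\ref{defi-dual-representation}: the $T(s)$ need not be $w^*$‑$w^*$‑continuous on all of $Y$, so one may not push the inner limit through $T(t_\gamma)$ without justification. The argument must be localized to $K$, and the crux is the verification that everything that appears — the orbit points $T(s_\al)x$, their images $T(t_\gamma)T(s_\al)x$, and the limit vector $0$ — stays inside $K$, which is precisely what renders the continuity clause (4) in the definition of a dual representation usable here. The remaining manipulations with iterated limits are routine, using that $(K,w^*)$ is Hausdorff so that the limits are well defined.
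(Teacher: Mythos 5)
Your argument is correct: the paper itself offers no proof (it merely calls the statement ``straightforward''), and your proof supplies exactly the details one would want, in particular the key observation that the inner limit may be pushed through $T(t_\gamma)$ only because all the relevant vectors ($T(s_\al)x$, $T(t_\gamma+s_\al)x$, and the limit $0$) lie in $\overline{ac\bk{T(t)x:t\in S}}^{w^*}$, where Definition~\ref{defi-dual-representation}(4) guarantees $w^*$--$w^*$ continuity. No gaps.
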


\begin{pro} \label{first-results}
Let $\semig{T}{S}$ be a dual semigroup representation.
\begin{enumerate}
\item $(\semig{T}{S},w^*)$ is a left semitopological semigroup.
\item $\semig{T}{S}$ has a compactification $\scST=\overline{\semig{T}{S}}^{w^*} \subset L(Y)$, which is a left semi-topological semigroup.
\item $\overline{co}^{w^*}\semig{T}{S}\subset L(Y)$ is a compact left semitopological semigroup.
\item  $\overline{ac}^{w^*}\semig{T}{S}\subset L(Y)$ is a compact left semitopological semigroup, and $T(s)$  commutes with every operator $U\in \overline{ac}^{w^*}\semig{T}{S}.$
\item $Y=Y_{a}\oplus Y_0$, where $Y_a\subset Y_{rev}$ and $Y_0\subset Y_{fl}.$
\end{enumerate}
\end{pro}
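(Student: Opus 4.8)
The plan is to dispatch the five assertions in order, (1)--(4) being essentially formal once the ambient dualities are fixed, and (5) --- the deLeeuw-Glicksberg splitting --- carrying the genuine content. For (1) I would use the Remark cited from \cite{Jarchow} to identify $L(Y)\subset L(Y,X^*)=(Y\otimes_\pi X)^*$, so that $w^*$-convergence of a bounded net $W_\la\to W$ in $L(Y)$ means precisely $\dual{W_\la y,x}\to\dual{Wy,x}$ for all $y\in Y$, $x\in X$. For a fixed $V\in L(Y)$ the right multiplication $W\mapsto WV$ is then $w^*$-continuous, since $\dual{WVy,x}=\dual{W(Vy),x}$ and $Vy\in Y$; hence $(L(Y),w^*)$, and with it $(\semig{T}{S},w^*)$ via $T(s+t)=T(s)T(t)$, is a left semitopological semigroup. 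For (2): boundedness of $\semig{T}{S}$ puts it into a multiple of $B_{L(Y,X^*)}$, which is $w^*$-compact by the Remark, so $\scST=\overline{\semig{T}{S}}^{w^*}$ is $w^*$-compact; condition (3) of Definition~\ref{defi-dual-representation} forces $Uy\in\overline{\bk{T(t)y:t\in S}}^{w^*}\subset Y$ for each $U\in\scST$, i.e.\ $\scST\subset L(Y)$. For the semigroup property, right multiplication by $T(t)$ is $w^*$-continuous and sends $\semig{T}{S}$ into itself, so $\scST\,T(t)\subset\scST$; for $\scST\cdot\scST$ I would write $U=w^*\text{-}\lim_\al T(s_\al)$, $V=w^*\text{-}\lim_\beta T(t_\beta)$, note $Vy\in K_y:=\overline{\mbox{ac}\,\bk{T(t)y:t\in S}}^{w^*}$, and invoke condition (4) --- the $w^*$-$w^*$-continuity of $T(s_\al)$ on $K_y$ --- to get $T(s_\al)V=w^*\text{-}\lim_\beta T(s_\al+t_\beta)\in\scST$, whence $UV=w^*\text{-}\lim_\al T(s_\al)V\in\scST$.

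For (3) and (4), convex (resp.\ absolutely convex) combinations $\sum\la_iT(s_i)$ and $\sum\mu_jT(t_j)$ multiply to $\sum_{i,j}\la_i\mu_jT(s_i+t_j)$, again a convex (resp.\ absolutely convex) combination, so $\mbox{co}\,\semig{T}{S}$ and $\mbox{ac}\,\semig{T}{S}$ are sub-semigroups; their $w^*$-closures are $w^*$-compact and contained in $L(Y)$ exactly as in (2), and the semigroup property passes to the closures by the argument just given. The commutation assertion in (4) is where condition (4) is again essential: $T(s)$ commutes with every element of $\mbox{ac}\,\semig{T}{S}$ by abelianness of $S$, and if $U=w^*\text{-}\lim_\gamma U_\gamma$ with $U_\gamma\in\mbox{ac}\,\semig{T}{S}$, then for $y\in Y$ we have $Uy\in K_y$, so by condition (4), $T(s)(Uy)=w^*\text{-}\lim_\gamma T(s)(U_\gamma y)=w^*\text{-}\lim_\gamma U_\gamma(T(s)y)=U(T(s)y)$, the last step using $T(s)y\in Y$; hence $T(s)U=UT(s)$.

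For (5) I would run the deLeeuw-Glicksberg machinery (cf.\ \cite{Krengel},\cite{RuppertLNM}) inside $\mathcal{Q}:=\overline{ac}^{w^*}\semig{T}{S}$, which by (3)/(4) is a $w^*$-compact left semitopological semigroup all of whose elements commute with every $T(s)$. By compactness $\mathcal{Q}$ possesses an idempotent $P$, which one takes minimal; standard compact-semigroup structure theory then gives that $P\mathcal{Q}P$ carries a group structure with unit $P$. Then $P\in L(Y)$ is an idempotent operator, hence a bounded projection commuting with all $T(s)$, and with $Y_a:=PY$, $Y_0:=(I-P)Y=\ker P$ one gets the topological direct sum $Y=Y_a\oplus Y_0$. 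For $y=Py\in Y_a$ and an arbitrary net \net{s}{\al}{A}, a $w^*$-cluster point $U$ of $T(s_\al)y$ lies in $\mathcal{Q}Py$, and the group structure of $P\mathcal{Q}P$ yields a net \net{t}{\gamma}{\Gamma} with $w^*\text{-}\lim_\gamma w^*\text{-}\lim_\al T(t_\gamma)T(s_\al)y=y$, so $y\in Y_{rev}$; for $y\in Y_0=\ker P$, since $P$ is a $w^*$-limit of absolutely convex combinations of the $T(s)$ and $Py=0$, the semigroup/ideal structure produces a single-operator net with $w^*\text{-}\lim_\al T(s_\al)y=0$, so $y$ is a flight vector, giving $Y_0\subset Y_{fl}$ (and $Y_a\cap Y_0=\bk{0}$ is re-confirmed by Proposition~\ref{rev-equals-flight-equals-zero}).

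The obstacle running through (2)--(4) is the failure of $w^*$-$w^*$-continuity for operators in $L(Y)$ flagged in the Remark after Definition~\ref{defi-dual-representation}: without it, neither closure under composition nor invariance of orbit closures inside $Y$ is automatic, and every interchange-of-limits step above must be routed through the sets $K_y$, where condition (4) supplies the missing continuity and condition (3) the membership in $Y$. The genuinely non-formal part, and where I expect the real effort, is (5): producing the idempotent projection $P$ in this $w^*$-setting and --- more delicately --- translating between the hull semigroup $\mathcal{Q}$, where $P$ is manufactured, and the bare operators $T(s)$, in terms of which reversibility and the flight property are defined; that bookkeeping, rather than the abstract existence of idempotents, is the crux.
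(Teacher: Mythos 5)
Your parts (1)--(4) follow essentially the paper's own route: left continuity of $W\mapsto WV$ from the pointwise-on-$Y$ description of the $w^*$-topology of $L(Y)$, membership $Uy\in Y$ from condition (3) of Definition~\ref{defi-dual-representation}, and closure under products together with the commutation statement via iterated limits routed through condition (4). No objection there.

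Part (5), however, contains a genuine error: you manufacture the minimal idempotent $P$ inside $\mathcal{Q}=\overline{ac}^{w^*}\semig{T}{S}$, whereas the splitting asserted in the proposition requires the minimal idempotent of $\scST=\overline{\semig{T}{S}}^{w^*}$ itself. This is not bookkeeping. Reversibility and the flight property are both defined by nets of \emph{single} operators $T(s_{\al})$, and a minimal idempotent of the closed absolutely convex hull is in general strictly ``smaller'' than that of $\scST$. Concretely, take $S=\za$ and $T(n)=(-1)^{n}I$ on any $Y$: then $0=\frac{1}{2}\fk{T(0)+T(1)}$ lies in $\mathcal{Q}$ and is its minimal idempotent, so your $Y_{0}=\ker P$ is all of $Y$; yet the only $w^*$-cluster points of $\bk{T(n)y}$ are $\pm y$, so $Y_{fl}=\bk{0}$ and the inclusion $Y_{0}\subset Y_{fl}$ fails. (The minimal idempotent of $\scST=\bk{\pm I}$ is $I$, giving the correct, trivial, splitting.) The same defect undermines your reversibility step: the inverse you extract from the group $P\mathcal{Q}P$ is a $w^*$-limit of absolutely convex combinations, so it does not by itself ``yield a net $\net{t}{\gamma}{\Gamma}\subset S$''; you need that inverse to lie in $\scST$, which is exactly what you get when $P\in\scST$, since then $P\scST P\subset\scST$ by Theorem~\ref{right_topo_semigroups}(4) applied to $\scST$. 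Likewise your phrase ``the semigroup/ideal structure produces a single-operator net with $w^*$-$\lim_{\al}T(s_{\al})y=0$'' is precisely the point needing proof, and it is immediate only when $P$ itself is a $w^*$-limit of single operators: then $T(s_{\al})(I-P)y\to P(I-P)y=0$. The paper's proof does exactly this --- $P$ minimal in $\scST$, an element $PVP\in\scST$ supplying the reversing net, and the identity $P(I-P)y=0$ supplying the flight net --- and your argument is repaired by replacing $\mathcal{Q}$ with $\scST$ throughout part (5).
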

\begin{proof}
Let $\net{s}{\al}{A}$ be a $w^*-$ convergent net with limit $s$, $y\in Y.$ Then, for a given $V\in L(Y)$, we have
$$
\netlim{\al}{A}<x,T(s_{\al})Vy>=<x,T(s)Vy>.
$$

For nets $\net{t}{\gamma}{\Gamma}$ and $\net{s}{\al}{A}\subset S,$ let $R=w^*- \lim_{\gamma}T(t_{\gamma})$ and $S=w^*-\lim_{\alpha}T(s_{\alpha}).$
From Def. \ref{defi-dual-representation}(3), we have $R,S\in L(Y),$ and from Def. \ref{defi-dual-representation}(4), we have
$$
w^*- \netlim{\gamma}{\Gamma} w^*-\netlim{\alpha}{A}  T(t_{\gamma})T(s_{\alpha})= w^*- \netlim{\gamma}{\Gamma} T(t_{\gamma} )S=RS,
$$
which proves $RS\in \overline{\bk{T(t)}_{t\ge0}}^{w^*}.$ The left continuity is straightforward. For the convex or absolute convex hull, the proof is quite similar.

Let $s\in S$ and $R=w^*- \lim_{\gamma}T(t_{\gamma}).$ Then, due to \ref{defi-dual-representation}(4),
$$T(s)R=w^*- \netlim{\gamma}{\Gamma}T(s)T(t_{\gamma})=w^*- \netlim{\gamma}{\Gamma}T(t_{\gamma})T(s)=RT(s).$$

Applying Thm. \ref{right_topo_semigroups}, we obtain a minimal idempotent $P$; hence, $Y= P Y\oplus (I-P)Y.$
Let $\net{s}{\al}{A}\subset $ with $U=w^*-\lim_{\alpha}T(s_{\alpha}).$ Applying Thm. \ref{right_topo_semigroups}(4), we find $PVP\in\scST$ such that $PVPUPy=Py$; hence, $Py\in Y_{rev}.$ The identity $P(I-P)y=0$ proves $(I-P)Y\subset Y_{fl}.$

\end{proof}

\begin{exa}
Let $\semig{T}{\rep}\subset L(X)$ be a $C_-$semigroup, where $\semig{T^{\sun}}{\rep}$ is a dual representation on $X^{\sun}.$
\end{exa}
\begin{proof}
Clearly, $\semig{T^{\sun}}{\rep}$ are $w^*-w^*$-continuous operators.
Let $S\in \scST$; then, for $x\in X^{\sun},$ we have $ T^{\sun}(t)Sx=ST^{\sun}(t)x\to Sx$ as $t\to 0.$ Hence
$$\overline{O(x)}^{w^*}=\bk{Sx:S\in\scST}\subset X^{\sun},$$
which concludes the proof.
\end{proof}

\begin{theo} \label{PTP-is-compact-group}
Let $P$ be a minimal idempotent.
\begin{enumerate}
\item If for all $y\in Y,$
$$
\Funk{P}{(\overline{\bk{T(t)y:t\in S}}^{w^*},w^*)}{(\overline{\bk{T(t)y:t\in S}}^{w^*},w^*)}{x}{Px}
$$
is continuous, then $P\scST P$ is a compact group.
\item If $P=V^*_{|Y }$ with $V\in L(X),$ then $P\scST P$ is a compact group.
\item If a minimal idempotent $P$ commutes with every operator of the compactification, then $P\scST P$ is a compact group.
\end{enumerate}
\end{theo}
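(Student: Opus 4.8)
The plan is to prove the three statements in sequence, observing that (2) and (3) will reduce to (1) once the right continuity hypothesis is established, and then to invoke the structure theory of compact right (equivalently, here, left) semitopological semigroups — specifically the result that a compact left semitopological semigroup with a minimal idempotent $P$ has the property that $P\scST P$ is a group (this is the standard consequence packaged in the cited \cite{RuppertLNM} and used via Thm.~\ref{right_topo_semigroups}). So the real content is: under each hypothesis, show that $P\scST P$, as a subsemigroup of the compact left semitopological semigroup $\scST$, is itself \emph{jointly} continuous — that is, a topological semigroup — so that the classical fact ``a compact topological semigroup which is algebraically a group with identity $P$ is a topological group'' applies, and compactness is inherited from $\scST$ since $P\scST P$ is $w^*$-closed (the map $U\mapsto PUP$ is $w^*$-continuous in $U$ for fixed $P$, its image is the continuous image of a compact set).

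For part (1): I would first record that $P\scST P$ is a compact subsemigroup of $\scST$ (closedness as above; note $P\in\scST$ so $P\scST P\subset\scST$). By Proposition~\ref{first-results} it is left semitopological. The hypothesis that $P\colon(\overline{O(y)}^{w^*},w^*)\to(\overline{O(y)}^{w^*},w^*)$ is continuous for every $y$ is exactly what is needed to upgrade ``left continuous'' to ``jointly continuous'' on $P\scST P$: given $U\in P\scST P$ and a net $U_\al\to U$ in $w^*$, and given $V\in P\scST P$, one wants $U_\al V\to UV$. Writing $U_\al=PU_\al P$, $V=PVP$, and testing against $x\in X$, $y\in Y$: $\langle x, U_\al V y\rangle = \langle x, PU_\al P(Vy)\rangle$. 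Here $Vy\in PY\subset\overline{ac\{T(t)y\}}^{w^*}$; the sub-net $U_\al P(Vy)$ stays in the orbit closure and converges $w^*$ to $UP(Vy)$ by Def.~\ref{defi-dual-representation}(4) applied along $\scST$ (or directly by the left semitopological property of $\scST$), and then applying $P$ preserves this convergence by hypothesis. This gives right continuity, hence joint continuity, hence $P\scST P$ is a compact topological semigroup; since it is algebraically a group with unit $P$ (by the minimal-idempotent structure theorem, Thm.~\ref{right_topo_semigroups}(4)), it is a compact topological group.

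For part (2): if $P = V^*_{|Y}$ with $V\in L(X)$, then $P$ is the restriction of a $w^*$-$w^*$-continuous operator on $X^*$, so in particular $P$ is $w^*$-$w^*$-continuous on all of $Y$, a fortiori on each orbit closure $\overline{O(y)}^{w^*}$; thus the hypothesis of (1) is satisfied and the conclusion follows. For part (3): if $P$ commutes with every $U\in\scST$, then for $U\in\scST$ and $y\in Y$ we have $PUy = UPy$, so $P$ restricted to $\overline{O(y)}^{w^*}$ agrees with the map $x\mapsto$ (limit expressions involving the $U$'s acting on $Py$); more directly, commutation lets us write, for a net $U_\al\to U$ and any $V$, $PU_\al PVP y = U_\al P^2 VP y = U_\al PVPy$, and by left semitopologicity of $\scST$ this converges $w^*$ to $UPVPy = PUPVPy$, giving joint continuity of multiplication on $P\scST P$ without needing separate continuity of $P$ itself. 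Again $P\scST P$ is then a compact topological group.

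The main obstacle is the passage from ``left semitopological'' to ``jointly continuous'' on $P\scST P$: one must be careful that the hypotheses are used at exactly the point where a net in the \emph{left} variable $U_\al$ is applied to a \emph{fixed} element $Vy$ of the orbit closure, and that the element $Vy$ really does lie in $\overline{ac\{T(t)y:t\in S\}}^{w^*}$ (so that Def.~\ref{defi-dual-representation}(4), or the established left-semitopological structure of $\scST$ from Proposition~\ref{first-results}, can be invoked to control $w^*$-limits of $U_\al(Vy)$). A secondary point requiring care is verifying that $P\scST P$ is $w^*$-closed and hence compact: this is immediate from $U\mapsto PUP$ being $w^*$-continuous with compact domain $\scST$. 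Once joint continuity and compactness are in hand, the classical Ellis-type theorem — a compact Hausdorff topological semigroup that is algebraically a group is a topological group — delivers the conclusion, and the group property itself is furnished by the minimality of $P$ via Thm.~\ref{right_topo_semigroups}.
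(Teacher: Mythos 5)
Your overall architecture matches the paper's: compactness of $P\scST P$ plus the minimal-idempotent structure theorem (Theorem~\ref{right_topo_semigroups}(4)) for the group property, with (2) reducing to (1) via $w^*$-$w^*$-continuity of dual operators and (3) handled by commutation. However, you have inverted where the hypotheses do their work, and this creates a genuine gap. You call the closedness of $P\scST P$ ``immediate from $U\mapsto PUP$ being $w^*$-continuous.'' It is not immediate: $\scST$ is only a \emph{left} semitopological semigroup, so $U\mapsto UP$ is continuous but $W\mapsto PW$ is not, and the continuity of $U\mapsto PUP$ is precisely the point where hypothesis (1) (resp.\ (2), (3)) must be invoked. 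The paper's entire proof consists of this step: for a net $\net{T}{\al}{A}$ with $w^*$-limit $T$ in $\scST$ one has $T_{\al}Px\to TPx$ pointwise $w^*$ with the net $\bk{T_{\al}Px}$ contained in the orbit closure $\overline{\bk{T(t)Px:t\in S}}^{w^*}$, and only then does the assumed continuity of $P$ on that set yield $PT_{\al}Px\to PTPx$, hence closedness and compactness of $P\scST P$. You do have exactly this ingredient in your discussion of part (1), but you deploy it toward a different goal and label the step that needs it ``secondary.''

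The second problem is your claim of joint continuity. The net computation you give --- $U_{\al}\to U$, $V$ fixed, conclude $U_{\al}V\to UV$ --- is continuity in the \emph{left} variable, which is already known from Proposition~\ref{first-results} and does not use hypothesis (1) in any essential way; it gives neither right-variable continuity nor joint continuity. Right-variable continuity $UV_{\al}\to UV$ would require the fixed operator $U=PU'P$ to be $w^*$-continuous on the relevant orbit closures, and the middle factor $U'\in\scST$ is a $w^*$-limit of the $T(t)$ and need not be $w^*$-continuous, so this does not follow from continuity of $P$ alone. Fortunately none of this is needed: the theorem asserts only that $P\scST P$ is a compact group, the group structure is supplied algebraically by Theorem~\ref{right_topo_semigroups}(4), and the upgrade to a topological group is deferred in the paper to Ellis's theorem (see the proof of Proposition~\ref{start-harmonic-analysis}). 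If you delete the joint-continuity/Ellis material and move your use of the hypotheses to the closedness argument, your proof coincides with the paper's; parts (2) and (3) then go through exactly as you wrote them.
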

\begin{proof}
To verify the compactness of group $P \scST P,$ let $\net{T}{\al}{A}$ be a net. Due to the compactness of $\scST $, we have the cluster point $T$. Without loss of generality, $T$ is the limit. Then, $\bk{T_{\al}Px}\subset \overline{ac\bk{T(t)Px:t\in S}}^{w^*},$ $w^*-\netlim{\al}{A}T_{\al}Px=TP.$ Consequently,
$$\netlim{\al}{A} <x,PT_{\al}Px>=<x,PTPx>.$$
which proves the claim. Dual operators are $w^*-w^*-$continuous. The case in which $P$ commutes is straightforward.
\end{proof}

\begin{cor}
If $\scST$ is Abelian, then $P\scST P$ is a compact Abelian group.
\end{cor}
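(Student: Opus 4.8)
The plan is to obtain this as an immediate consequence of Theorem~\ref{PTP-is-compact-group}. First I would record that a minimal idempotent $P$ always belongs to $\scST=\overline{\semig{T}{S}}^{w^*}$, since it arises from the structure theory of compact left semitopological semigroups (Thm.~\ref{right_topo_semigroups}) applied to the compactification itself. Hence, if $\scST$ is Abelian, then $P$ trivially commutes with every operator of $\scST$, so hypothesis (3) of Theorem~\ref{PTP-is-compact-group} is met and $P\scST P$ is a compact group.

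It then remains only to see that this group is commutative. Because $P\in\scST$ and $\scST$ is closed under multiplication, $P\scST P=\bk{PUP:U\in\scST}$ is a subset of $\scST$ (indeed $PUP=P^2U=PU$ by commutativity), and therefore it inherits the commutative multiplication of $\scST$. Since this inherited operation is exactly the one that endows $P\scST P$ with its group structure (with $P$ as unit), the group $P\scST P$ is Abelian, and it is compact by Theorem~\ref{PTP-is-compact-group}. This finishes the proof.

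I do not anticipate a genuine obstacle: the content is entirely carried by Theorem~\ref{PTP-is-compact-group}. The only point deserving a line of care is the verification that the group multiplication on $P\scST P$ is the restriction of the semigroup multiplication of $\scST$, so that commutativity transfers; once that is noted the corollary is immediate.
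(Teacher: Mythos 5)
Your proposal is correct and follows essentially the same route as the paper, whose entire proof is the remark that the minimal idempotent $P$ belongs to $\scST$; you simply make explicit the two consequences the paper leaves implicit, namely that $P\in\scST$ together with commutativity puts you in case (3) of Theorem~\ref{PTP-is-compact-group}, and that $P\scST P\subset\scST$ inherits the Abelian multiplication.
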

\begin{proof}
Use the fact that the minimal idempotent $P \in \scST.$
\end{proof}

Next, we recall some definitions of certain classes of vectors and functions.
\begin{defi} Let $\semig{T}{S}$ be a dual representation on $Y\subset X^*.$
\begin{enumerate}
\item The orbit of a vector $y\in Y$ is given by
$$
O(x):=\bk{T(t)x:t\in S}.
$$
\item A vector $x\in Y$ is called Eberlein-weak almost periodic (E.-wap) if
$
O(x)$ is weakly relatively compact in $Y.$
\item A vector $x\in Y$ is called almost periodic if $S= \re$ and
$O(x)$ is relatively compact in $Y.$
\item We define for a Banach space $X$ that a function $f\in C_b(S,X)$ is called E.-wap if the orbit with respect to the translation semigroup is rel.~ $\sigma(C_b(\re,X),C_b(\re,X)^*)-$compact.
\begin{eqnarray*}
W(S,X)&:=&\bk{f\in C_b(S,X): f \mbox{ is E.-wap}}, \\
W_0(S,X)&:=& \bk{f\in W(S,X): f_{t_n} \to 0 \mbox{ weakly for some } \seq{t}{n}\subset \jz },\\
AP(\re,X)&:=&\bk{f\in C_b(\re,X): f \mbox{ is almost periodic}}.
\end{eqnarray*}
In the scalar-valued case, $X$ is left.
\item Let $M \subset Y^*$ be a closed and separating space and $S=\re$; a vector $y\in Y$ is called $M-$weakly almost periodic if $\bk{t\mapsto x^*T(t)x} \in AP(\re)$ for all $x^*\in M.$

\end{enumerate}
\end{defi}

\begin{theo} \label{T-abelian-condition}
Let $S$ be Abelian and $\semig{T}{S}$ be a dual representation on $Y$. If $y\in Y,$ we have $UVy=VUy$ for all $U,V\in \scST$ if and only if
$$
\Funk{i_x}{S}{W(S)}{s}{<x,T(s)y>}
$$
for a set of separating vectors $x\in X.$
\end{theo}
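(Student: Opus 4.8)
The statement is a biconditional linking commutativity of the compactification's action on a fixed vector $y$ to the membership of the associated coordinate functions $i_x$ in the Eberlein-weak-almost-periodic space $W(S)$. I would organize the proof around the characterization of $W(S,X)$ via weak relative compactness of translation orbits, together with the fact that $w^*$-convergent nets in $\scST$ act pointwise on $Y$. Throughout, the guiding dictionary is: a net $\net{s}{\al}{A}\subset S$ produces, after passing to a subnet, a limit operator $U=w^*\text{-}\lim_\al T(s_\al)\in\scST$ (by Prop. \ref{first-results}(2)), and $\langle x,Uy\rangle = \lim_\al \langle x,T(s_\al)y\rangle = \lim_\al (i_x)(s_\al)$; so the behavior of $i_x$ under the translation semigroup is exactly mirrored by the action of $\scST$ on $y$, tested against $x$.

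\emph{Necessity ($\Rightarrow$).} Assume $UVy=VUy$ for all $U,V\in\scST$. Fix a separating $x\in X$ (indeed any $x\in X$) and let $i_x(s)=\langle x,T(s)y\rangle$. I would show the translation orbit $\{(i_x)_r : r\in S\}$ is relatively weakly compact in $C_b(S)$. Given a net $\net{r}{\al}{A}\subset S$, pass to a subnet so that $T(r_\al)\to U$ in the $w^*$-topology on $\scST$. The candidate weak limit of $(i_x)_{r_\al}$ is the function $s\mapsto \langle x, T(s)Uy\rangle$. To upgrade $w^*$-convergence of the operators to \emph{weak} convergence of the functions in $C_b(S)$ — i.e. convergence against all of $C_b(S)^*$, in particular against double limits along iterated nets — one invokes precisely the commutativity hypothesis: iterating a second net $\net{t}{\gamma}{\Gamma}$ with $T(t_\gamma)\to V$, the two iterated limits $\langle x, VUy\rangle$ and $\langle x, UVy\rangle$ agree, which is the Grothendieck-type double-limit criterion for weak relative compactness (the Eberlein--Šmulian / Grothendieck characterization of $W(S,X)$, as used in \cite{RuessSemi}). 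Hence $i_x\in W(S)$.

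\emph{Sufficiency ($\Leftarrow$).} Conversely, suppose $i_x\in W(S)$ for every $x$ in a separating subset of $X$. Take $U,V\in\scST$, say $U=w^*\text{-}\lim_\al T(s_\al)$ and $V=w^*\text{-}\lim_\gamma T(t_\gamma)$. For a fixed separating $x$, $\langle x,UVy\rangle$ and $\langle x,VUy\rangle$ are the two iterated limits of the double net $(i_x)(s_\al+t_\gamma)=\langle x,T(s_\al+t_\gamma)y\rangle$ — here $S$ being Abelian makes $T(s_\al+t_\gamma)=T(s_\al)T(t_\gamma)=T(t_\gamma)T(s_\al)$, so the \emph{same} double net computes both orders. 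Since $i_x\in W(S)$, Grothendieck's double-limit theorem forces the two iterated limits to coincide, so $\langle x,UVy\rangle=\langle x,VUy\rangle$. As $x$ ranges over a separating set, $UVy=VUy$. Finally one passes from pairs of \emph{limit} operators to arbitrary pairs in $\scST$ by density and $w^*$-continuity of the left action (Prop. \ref{first-results}(1)), noting the set of $U$ with $UVy=VUy$ is $w^*$-closed for each fixed $V$ of the special form, and then repeating in the other slot.

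\emph{Main obstacle.} The delicate point is the equivalence between $w^*$-operator convergence on $\scST$ and \emph{weak} convergence of the scalar functions $i_x$ in $C_b(S)$: $w^*$-limits only control pairings against the "rank-one" functionals $V\mapsto\langle x,Vy\rangle$, whereas $W(S)$-membership is about weak compactness in $C_b(S)$, whose dual is far larger (finitely additive measures on $S$). The bridge is exactly Grothendieck's iterated-limit criterion — weak relative compactness of a bounded subset of $C_b(S)$ is equivalent to the two iterated limits along any pair of sequences/nets from the set and from $S$ agreeing — and matching "iterated limits of $i_x$-translates" with "iterated $w^*$-limits of $T(s_\al)T(t_\gamma)$ applied to $y$ and tested on $x$" is where the Abelian hypothesis and Def. \ref{defi-dual-representation}(4) do the real work. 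I expect verifying that this matching is legitimate (that the relevant limits exist along appropriate subnets and that boundedness of $\scST$ supplies the uniform bound needed for Grothendieck's theorem) to be the technical heart of the argument; everything else is bookkeeping with separating families and $w^*$-closedness.
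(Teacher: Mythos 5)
Your proposal is correct and follows essentially the same route as the paper: both directions hinge on identifying $\langle x,UVy\rangle$ and $\langle x,VUy\rangle$ with the two iterated limits of the double net $\langle x,T(t_{\gamma}+s_{\alpha})y\rangle$ (using that $S$ is Abelian and that every element of $\scST$ is a $w^*$-limit of a net $T(s_{\alpha})$), and then applying Grothendieck's double-limit criterion in each direction, exactly as the paper does. The only difference is that the paper's proof goes on to extend the commutativity to $\overline{ac}^{w^*}\semig{T}{S}$ via bounded absolutely convex combinations of translates and point evaluations, a strengthening beyond the stated theorem that you were not required to supply.
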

\begin{proof}
For nets $\net{t}{\gamma}{\Gamma},\net{s}{\al}{A}\subset S, let $
$R=w^*- \lim_{\gamma}T(t_{\gamma})$ and $S=w^*-\lim_{\alpha}T(s_{\alpha}).$
The identity
$$
<x,RSy>=\netlim{\alpha}{A}\netlim{\alpha}{A}<x, T(t_{\gamma}+s_{\alpha})y>
=\netlim{\alpha}{A}\netlim{\alpha}{A}<x, T(t_{\gamma}+s_{\alpha})y>=<x,SRy>
$$
proves the claim in both directions. If $<x,T(s)y>\in W(S),$ the double limit criteria hold, and the operators commute; if they commute, the double limit criteria \cite{Groth} imply $<x,T(s)y>\in W(S).$

Now, let $U,V\in\overline{ac}^{w^*}\semig{T}{S}.$ Then, we find nets
$\bk{t_i^{\gamma}}_{i\in\za,\gamma\in\Gamma},$ $\bk{s_i^{\lambda}}_{i\in\za,\lambda\in\Lambda}\subset\rep$
and $\bk{\alpha_i^{\gamma}}_{i\in\za,\gamma\in\Gamma},$ $\bk{\beta_i^{\lambda}}_{i\in\za,\lambda\in\Lambda}\subset \re$,
with
$\sum_{i=1}^{n_{\gamma}} \btr{\alpha_i^{\gamma}}\le1$
and
$\sum_{i=1}^{m_{\lambda}}\btr{\beta_i^{\lambda}}\le 1,$ such that
$$
U_{\gamma}:=\sum_{i=1}^{n_{\gamma}}\alpha_i^{\gamma}T(t_i^{\gamma})\mbox{ with } \netlim{\gamma}{\Gamma}U_{\gamma}=U \mbox{ and }
V_{\lambda}=\sum_{i=1}^{m_{\lambda}}\beta_i^{\lambda}T(s_i^{\lambda}), \mbox{ with } \netlim{\lambda}{\Lambda}V_{\lambda}=V.
$$
Now, define $g(s):=<x,T(s)x^*>,$, which is assumed to be E.-wap, and the bounded linear functional $\delta_t(g):=g(t)$; then, the duality reads as
\begin{eqnarray*}
\lefteqn{<x,VUx^{*}>=\netlim{\lambda}{\Lambda}<x,V_{\la}U^{*}x^{*}>} \\
&=&\netlim{\lambda}{\Lambda}\netlim{\gamma}{\Gamma}
\sum_{i=1}^{n_{\gamma}}\sum_{j=1}^{m_{\lambda}}\alpha_i^{\gamma}\beta_j^{\lambda}<x,T(t_i^{\gamma}+s_{j}^{\lambda})x^{*}>,\\
&=&\netlim{\lambda}{\Lambda}\netlim{\gamma}{\Gamma}
\sum_{i=1}^{n_{\gamma}}\sum_{j=1}^{m_{\lambda}}\alpha_i^{\gamma}\beta_j^{\lambda}g(t_i^{\gamma}+s_{j}^{\lambda}), \\
&=&\netlim{\lambda}{\Lambda}\netlim{\gamma}{\Gamma}
<\sum_{i=1}^{n_{\gamma}}\alpha_i^{\gamma}g(\cdot+t_i^{\gamma}),\sum_{j=1}^{m_{\lambda}}\beta_j^{\lambda}\delta_{s_j^{\lambda}}>_{(C_b(S),C_b(S)^*)}\\
&=&\netlim{\gamma}{\Gamma}\netlim{\lambda}{\Lambda}
<\sum_{i=1}^{n_{\gamma}}\alpha_i^{\gamma}g(\cdot+t_i^{\gamma}),\sum_{j=1}^{m_{\lambda}}\beta_j^{\lambda}\delta_{s_j^{\lambda}}>_{(C_b(S),C_b(S)^*)}.
\end{eqnarray*}
As $O(g)$ is weakly relatively compact in $C_b(S),$
its closed absolutely convex hull is weakly compact. Furthermore, because $\nrm{\delta_t}\le 1,$ the absolute convex combination is bounded. Hence, we have separated the limits and determined that the interchanged limits coincide.
\end{proof}

Next, we show how some results of \cite{DeGli1} embed into the given context.
\begin{pro} \label{embed-DeGli-theory}
Let $j_X:X\to X^{**}$ denote the canonical embedding, $S$ be an Abelian semigroup, and $X$ be a Banach space $\semig{T}{S}\subset L(X)$ such that
\begin{enumerate}
\item $T(s+t)=T(s)T(t)$;
\item for given $x\in X,$ $x^* \in X^*$
$$
\Funk{T_x}{S}{\ce}{s}{<x^*,T(s)x>}
$$
is continuous.
\item $\bk{T(t)x:t\in S}$ is weakly relatively compact.
\end{enumerate}
Then, $\semig{T^{**}_{|j_XX}}{S}$ is a dual representation on $j_XX,$ and for all $x^*\in X^*,x\in X,$ we have
$$
\bk{s\mapsto <x^*,T^{**}(s)j_Xx>}\in W(S).
$$
\end{pro}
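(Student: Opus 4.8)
The plan is to verify the three conditions of Definition \ref{defi-dual-representation} for the family $\semig{T^{**}_{|j_XX}}{S}$ on $Y=j_XX\subset X^{**}=(X^*)^*$, and then to deduce the $W(S)$-membership from the weak compactness hypothesis (3) together with the characterization in Theorem \ref{T-abelian-condition}. Throughout, the ambient dual pairing is $\langle X^*,X^{**}\rangle$, so the relevant $w^*$-topology on $j_XX$ is $\sigma(X^{**},X^*)$ restricted to $j_XX$, which on $j_XX$ agrees with $\sigma(X,X^*)$ via the isometry $j_X$. This identification is the bookkeeping device that makes everything run.

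First I would record the algebraic identity $T^{**}(s+t)=T^{**}(s)T^{**}(t)$, which is immediate from biadjointness and condition (1); and note $j_X T(s)=T^{**}(s)j_X$, so the orbit of $j_Xx$ under $T^{**}$ is $j_X(O(x))$. Condition (2) of Definition \ref{defi-dual-representation} then reads $s\mapsto\langle x^*,T^{**}(s)j_Xx\rangle=\langle x^*,j_X T(s)x\rangle=\langle x^*,T(s)x\rangle$, which is continuous by hypothesis (2). For condition (3), hypothesis (3) says $O(x)$ is weakly relatively compact in $X$; its $\sigma(X,X^*)$-closure $K$ is then weakly compact in $X$, and $j_X(K)$ is $\sigma(X^{**},X^*)$-compact in $X^{**}$ — this is precisely the statement that a weakly compact set in $X$ has $j_X$-image that is $w^*$-compact in $X^{**}$ (the two topologies coincide on such a set). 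Hence $\overline{\{T^{**}(t)j_Xx:t\in S\}}^{w^*}=j_X(K)\subset j_XX=Y$. The same argument applied to absolutely convex hulls — using that the closed absolutely convex hull of a weakly compact set in a Banach space is weakly compact (Krein–Šmulian) — gives that $\overline{ac\{T^{**}(t)j_Xx:t\in S\}}^{w^*}$ lies in $j_X(\overline{ac}^{\,w}K)$, again a weakly compact subset of $X$ identified with a $w^*$-compact subset of $j_XX$. On such a set the $w^*$-topology is the $j_X$-image of the weak topology of $X$, and $T^{**}(s)$ restricted there is $j_X T(s) j_X^{-1}$; since $T(s)\in L(X)$ is weak-weak continuous, $T^{**}(s)$ is $w^*$-$w^*$ continuous on that set. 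That settles condition (4), so $\semig{T^{**}_{|j_XX}}{S}$ is a dual representation on $j_XX$.

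For the final assertion, fix $x^*\in X^*$ and $x\in X$ and set $g(s):=\langle x^*,T^{**}(s)j_Xx\rangle=\langle x^*,T(s)x\rangle$. By hypothesis (3), $O(x)$ is weakly relatively compact in $X$, hence — again by Krein–Šmulian — so is its closed absolutely convex hull; consequently the family $\{T^{**}(t)j_Xx:t\in S\}$ has $w^*$-compact absolutely convex closure inside $Y$, and any two elements $U,V$ of $\overline{ac}^{\,w^*}\semig{T^{**}}{S}$ arise as iterated $w^*$-limits of absolutely convex combinations of the $T^{**}(t)$. Running the double-limit interchange exactly as in the proof of Theorem \ref{T-abelian-condition} — the weak compactness of $\overline{ac}^{\,w}(O(x))$ in $X$ supplies the Grothendieck double-limit condition that permits separating the limits — one gets $UVj_Xx=VUj_Xx$ for all such $U,V$. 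By the equivalence in Theorem \ref{T-abelian-condition} (taking the separating set of vectors in $X^*$ acting on $j_XX$), this commutation is equivalent to $\{s\mapsto\langle x^*,T^{**}(s)j_Xx\rangle\}\in W(S)$ for every $x^*$, which is the claim.

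The main obstacle, and the only point requiring care rather than bookkeeping, is the passage from "weakly compact in $X$" to "$w^*$-compact in $j_XX\subset X^{**}$" together with the assertion that the two induced topologies agree on such sets — and, relatedly, invoking Krein–Šmulian to keep absolutely convex hulls weakly compact so that condition (4) and the double-limit argument survive the passage to $\overline{ac}^{\,w^*}$. Once that identification is in place, conditions (1)–(3) are formal, condition (4) reduces to the weak-weak continuity of $T(s)\in L(X)$, and the $W(S)$-statement is a direct citation of Theorem \ref{T-abelian-condition} applied in the bidual.
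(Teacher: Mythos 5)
Your proof is correct and follows essentially the same route as the paper's: the key step in both is the identity $T^{**}(t)j_Xx = j_XT(t)x$ together with the observation that weak relative compactness of the orbit forces $\overline{O(j_Xx)}^{w^*} = \overline{O(j_Xx)}^{w} \subset j_XX$, after which the remaining conditions and the $W(S)$ membership follow from the Grothendieck double-limit criterion. The paper dismisses everything beyond this orbit-closure computation as ``straightforward,'' so your filled-in verification of conditions (1)--(4) (via Krein--\v{S}mulian and weak-weak continuity) and the double-limit argument routed through Theorem \ref{T-abelian-condition} is simply a more detailed rendering of the same proof.
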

\begin{proof}
Because $T^{**}(t)j_Xx=j_XT(t)x,$ and due to the weak compactness of the orbit, $\overline{O(j_Xx)}^{w^*}=\overline{O(j_Xx)}^{w}\subset j_XX.$ The rest is straightforward.
\end{proof}

\begin{pro} \label{weak-almost-periodicity-implies-X-a}
Let $\jz\in\bk{\re,\rep}$ and $x^{\sun}\in X^{\sun}.$
\begin{enumerate}
\item Let $<x,T^{\sun}(\cdot)x^{\sun}>\in W(\jz)$; then, for the splitting $x^{\sun}=x_a^{\sun}+x_0^{\sun},$ we have $<x,T^{\sun}(\cdot)x^{\sun}_a>\in AP(\re)_{|\jz}$ and $<x,T^{\sun}(\cdot)x^{\sun}_0>\in W_0(\jz).$
\item If $<x,T^{\sun}(\cdot)x^{\sun}>\in AP(\re)_{|\jz}$ for a separating set of vectors $x\in X,$ then $x^{\sun}\in X_a^{\sun}.$
\item If $<x,T^{\sun}(\cdot)x^{\sun}>\in W_0(\jz)$ for a separating set of vectors $x\in X,$ then $x^{\sun}\in X_0^{\sun}.$
\end{enumerate}
\end{pro}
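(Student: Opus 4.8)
\textbf{Proof proposal for Proposition \ref{weak-almost-periodicity-implies-X-a}.}

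The plan is to transfer the abstract splitting $Y = Y_a \oplus Y_0$ from Proposition \ref{first-results}(5), applied to the dual representation $\semig{T^\sun}{\jz}$ on $Y = X^\sun$, into the concrete function-space language via the maps $i_x : s \mapsto \langle x, T^\sun(s)x^\sun\rangle$. For (1), I would first note that the hypothesis $\langle x, T^\sun(\cdot)x^\sun\rangle \in W(\jz)$ for the separating family of $x\in X$ forces, by Theorem \ref{T-abelian-condition}, that the operators $U,V \in \scST = \overline{\semig{T^\sun}{\jz}}^{w^*}$ commute on $x^\sun$, so the compactification restricted to the relevant orbit is Abelian. Then the minimal idempotent $P$ acts on $\overline{ac\{T^\sun(t)x^\sun\}}^{w^*}$ and produces $x^\sun_a := Px^\sun$, $x^\sun_0 := (I-P)x^\sun$. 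The key computation is that $\langle x, T^\sun(\cdot)x^\sun_a\rangle = \langle x, T^\sun(\cdot)Px^\sun\rangle$ is, for each separating $x$, a uniform limit of translates-and-averages of the E.-wap function $g(s) = \langle x, T^\sun(s)x^\sun\rangle$, living in the weakly compact (hence by Eberlein--Šmulian, weakly sequentially compact) closed absolutely convex hull of $O(g)$ in $C_b(\jz)$; on the Abelian compact group $P\scST P$ the orbit of $Px^\sun$ is norm-compact, so $\langle x, T^\sun(\cdot)x^\sun_a\rangle$ has relatively compact orbit in $C_b(\jz)$, i.e. is almost periodic (the restriction-to-$\jz$ of an element of $AP(\re)$). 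For the flight part, $(I-P)x^\sun \in Y_0$ means there is a net $s_\al$ with $w^*-\lim T^\sun(s_\al)x^\sun_0 = 0$; combined with E.-wap of $g$, one extracts a sequence $t_n$ along which the translates $g(\cdot + t_n)$ converge weakly to $0$ in $C_b(\jz)$, which is exactly membership in $W_0(\jz)$.

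For (2), I would argue the converse direction: assuming $\langle x, T^\sun(\cdot)x^\sun\rangle \in AP(\re)_{|\jz}$ for a separating set of $x$, almost periodicity gives relative norm-compactness of the orbit $O(g)$ in $C_b$, which in particular is weakly relatively compact, so $x^\sun \in X^\sun$ is E.-wap and part (1) applies, giving the splitting $x^\sun = x^\sun_a + x^\sun_0$. It remains to show $x^\sun_0 = 0$. For this I would use that $x^\sun_0 \in Y_0$ is a flight vector: there is a net $s_\al$ with $w^*-\lim T^\sun(s_\al)x^\sun_0 = 0$, hence $\langle x, T^\sun(s_\al)x^\sun_0\rangle \to 0$ for every $x$. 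But $\langle x, T^\sun(\cdot)x^\sun_0\rangle = \langle x, T^\sun(\cdot)x^\sun\rangle - \langle x, T^\sun(\cdot)x^\sun_a\rangle$ is a difference of two almost periodic functions, hence itself in $AP(\re)_{|\jz}$; an almost periodic function whose values along some net tend to $0$ — here using that translates along $s_\al$ of an a.p.\ function subconverge, together with the group structure / minimality of the mean — must be identically $0$. Since this holds for a separating family of $x$, $x^\sun_0 = 0$ and $x^\sun = x^\sun_a \in X_a^\sun$. Part (3) is the mirror image: if $\langle x, T^\sun(\cdot)x^\sun\rangle \in W_0(\jz)$ for separating $x$, then in particular it lies in $W(\jz)$, so (1) gives the splitting; one then shows the almost periodic summand $\langle x, T^\sun(\cdot)x^\sun_a\rangle$ vanishes because $W_0 \cap AP(\re)_{|\jz} = \{0\}$ (a nonzero a.p.\ function cannot have a sequence of translates converging weakly to $0$, since weak convergence of translates of an a.p.\ function implies norm convergence and the limit would again be a.p.\ of the same ``size''), whence $x^\sun_a = 0$ on a separating family and $x^\sun = x^\sun_0 \in X_0^\sun$.

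The main obstacle I anticipate is making rigorous the passage from the $w^*$-convergence of operator nets $T^\sun(s_\al)$ on $X^\sun$ to genuine weak (or norm) convergence of the corresponding translated functions $g(\cdot + s_\al)$ in $C_b(\jz)$, and conversely. The two topologies are a priori different, and the whole point of the paper is that they need not coincide; what saves the argument is the E.-wap hypothesis, which via Eberlein--Šmulian upgrades weak relative compactness of $O(g)$ to sequential compactness, and the double-limit (Grothendieck) criterion invoked in Theorem \ref{T-abelian-condition}, which ties together the interchangeability of limits on the operator side with the weak topology of $C_b$. I would therefore route all the convergence arguments through the functional $\delta_t$ and the weakly compact absolutely convex hull of $O(g)$, exactly as in the proof of Theorem \ref{T-abelian-condition}, rather than trying to work directly in $L(X^\sun)$. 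A secondary technical point is the bookkeeping for $\jz = \rep$ versus $\jz = \re$: almost periodic functions on $\rep$ extend uniquely to $AP(\re)$, and the flight/reversible decomposition is insensitive to this, but one should state the extension step explicitly to justify writing $AP(\re)_{|\jz}$.
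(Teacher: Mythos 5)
Your overall architecture coincides with the paper's: transfer the abstract splitting $x^{\sun}=Px^{\sun}+(I-P)x^{\sun}$ of Proposition \ref{first-results}(5) to the scalar functions $s\mapsto\langle x,T^{\sun}(s)x^{\sun}\rangle$, identify the two summands with the reversible and flight parts of $W(\jz)$ under the translation semigroup, invoke the classical deLeeuw--Glicksberg identifications $W(\jz)_{rev}=AP(\re)_{|\jz}$ and $W(\jz)_{fl}=W_0(\jz)$, and for (2) and (3) conclude via the fact that a vector which is simultaneously reversible and a flight vector is zero (Proposition \ref{rev-equals-flight-equals-zero}), applied over a separating family. Your treatment of (2) and (3), including the extraction of a sequence via Eberlein--\v{S}mulian and the observation that $W_0(\jz)\cap AP(\re)_{|\jz}=\{0\}$, is sound and matches the paper's (much terser) argument.

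There is, however, one genuinely false step in your part (1): the claim that ``on the Abelian compact group $P\scST P$ the orbit of $Px^{\sun}$ is norm-compact,'' from which you deduce relative norm-compactness of the orbit of $\langle x,T^{\sun}(\cdot)x^{\sun}_a\rangle$ in $C_b(\jz)$. The orbit $GPx^{\sun}$ is only the continuous image of $(G,w^*)$ under $S\mapsto SPx^{\sun}$, hence $w^*$-compact; norm-compactness of orbits of reversible vectors is precisely what fails in general, and the paper devotes Theorems \ref{Y_uds=Y_a} and \ref{separable-X_uds} to the gap between $Y_a$ and $Y_{uds}$ --- norm-compactness of $O(y)$ for $y\in Y_a$ is there shown to be \emph{equivalent} to the additional hypothesis of norm-separability of the orbit closure, so it cannot be assumed. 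The correct route (and the one the paper takes) stays entirely at the scalar level: the $w^*$-reversibility of $Px^{\sun}$ transfers, via the pointwise convergence $\langle x,T^{\sun}(\cdot+t_{\gamma}+s_{\al})Px^{\sun}\rangle\to\langle x,T^{\sun}(\cdot)Px^{\sun}\rangle$ and Grothendieck's double-limit criterion (pointwise and weak topologies agree on the weakly compact set $\overline{O(g)}^{w}$), into reversibility of the scalar function as an element of $W(\jz)$ with respect to the translation semigroup; the scalar deLeeuw--Glicksberg splitting then yields $\langle x,T^{\sun}(\cdot)x_a^{\sun}\rangle\in AP(\re)_{|\jz}$ with no compactness claim about the vector orbit. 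You already identify this bridge as the ``main obstacle'' in your closing paragraph and propose routing convergence through $\delta_t$ and the weakly compact hull of $O(g)$; that is exactly the repair, so the norm-compactness detour should simply be deleted rather than patched.
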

\begin{proof}
As $<x,T^{\sun}(t)x^{\sun}>=<x,T^{\sun}(t)x_a^{\sun}>+<x,T^{\sun}(t)x_0^{\sun}>,$
we learn from the definition of the vector-valued splitting that $<x,T^{\sun}(t)x_0^{\sun}>$ is a flight vector, and $<x,T^{\sun}(t)x_a^{\sun}>$ is reversible with respect to the scalar translation group on $W(\jz).$ The splitting of \cite{DeGli1} gives $W(\jz)_{rev}=AP(\re)_{|\jz},$ and $W(\jz)_{fl}=W_0(\jz).$

Using almost periodic functions that are reversible, we have
$$<x,T^{\sun}(t)x^{\sun}>-<x,T^{\sun}(t)x_a^{\sun}>=<x,T^{\sun}(t)x_0^{\sun}>, \mbox{ for all } x\in X, t\in\re.
$$
By Pro 8.17, we find that $<x,T^{\sun}(t)x^{\sun}>-<x,T^{\sun}(t)x_a^{\sun}>=0$ for a separating set of vectors $x\in X$ and all $ t\in\re,$ and therefore, $x^{\sun}=x_a^{\sun}.$ For the last claim, use
$<x,T^{\sun}(t)x^{\sun}>-<x,T^{\sun}(t)x_0^{\sun}>=<x,T^{\sun}(t)x_a^{\sun}>$ and similar arguments.
\end{proof}

As our main interest reduces to bounded sets in operator spaces such as $L(X,Y^*)$, for a given locally convex topology on a Banach space $X,$ we define the convergence of bounded nets.
\begin{defi}
Let $\tau$ be a locally convex topology on $X^{*}.$
\begin{enumerate}
\item $\tau$ is call representation compatible if for all $y\in Y$, the identity map
$$
\funk{id}{(\overline{ac\bk{T(t)y:t\in S}}^{w^*},w^*)}{(\overline{ac\bk{T(t)y:t\in S}}^{\tau},\tau)}
$$
is a homeomorphism.
\item Let $Y\subset X^{*}$; for the net $$\net{T}{\al}{A}: Y\to Y,$$
we call the net $\net{T}{\al}{A}$ $\tau-OT$ convergent if there exists a $T\in L(Y)$ such that
$$\tau-\netlim{\al}{A}T_{\la}x^{*}=Tx^{*} \mbox{ for all } x^{*}\in Y.
$$
\end{enumerate}
\end{defi}

\begin{pro} \label{acT-compact}
Let $\semig{T}{S}$ be a dual representation and $\tau$ be a representation-compatible topology. Then,
$$
\Funk{\iota}{\fk{\overline{ac}^{w^*}\fk{\semig{T}{S}},w^*}}{\fk{\overline{ac}^{\tau-OT}\fk{\semig{T}{S}},\tau_Y-OT}}{T}{T}
$$
is a homeomorphism, and $(\overline{ac}^{\tau-OT}\fk{\semig{T}{S}},\tau-OT)$ is a compact right semitopological semigroup.
\end{pro}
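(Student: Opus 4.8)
The plan is to transport the compactness and the semigroup structure that we already have on $\overline{ac}^{w^*}(\semig{T}{S})$ (from Proposition \ref{first-results}(4)) through the identity map $\iota$, using representation compatibility to control the change of topology. First I would check that $\iota$ is well-defined: if $\net{T}{\al}{A}\subset ac\,\semig{T}{S}$ converges in the $w^*$-topology to some $T\in L(Y)$, then for each fixed $y\in Y$ the net $\net{T_\al y}{\al}{A}$ lies in the $w^*$-compact set $\overline{ac\{T(t)y:t\in S\}}^{w^*}$, on which, by representation compatibility, the $w^*$-topology and the $\tau$-topology coincide; hence $T_\al y\to Ty$ also in $\tau$, i.e. $\net{T}{\al}{A}$ is $\tau$-$OT$ convergent with the same limit $T$. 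This shows simultaneously that $\iota$ maps $\overline{ac}^{w^*}(\semig{T}{S})$ into $\overline{ac}^{\tau-OT}(\semig{T}{S})$ and that it is continuous as a map $(\,\cdot\,,w^*)\to(\,\cdot\,,\tau_Y\text{-}OT)$. Conversely, a $\tau$-$OT$ convergent net is, by the same homeomorphism applied in the reverse direction on each orbit closure, $w^*$-$OT$ convergent to the same operator, so $\iota$ is a bijection onto $\overline{ac}^{\tau-OT}(\semig{T}{S})$ with continuous inverse. Since $(\overline{ac}^{w^*}(\semig{T}{S}),w^*)$ is compact (Proposition \ref{first-results}(4)) and the target is Hausdorff, $\iota$ is automatically a homeomorphism; in particular $(\overline{ac}^{\tau-OT}(\semig{T}{S}),\tau_Y\text{-}OT)$ is compact.

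It remains to verify the algebraic and the separate-continuity statements on the $\tau$-side. For the semigroup structure, note that $\iota$ is the identity on operators, so the multiplication on $\overline{ac}^{\tau-OT}(\semig{T}{S})$ is just composition of operators, which is the same binary operation as on $\overline{ac}^{w^*}(\semig{T}{S})$; thus $\overline{ac}^{\tau-OT}(\semig{T}{S})$ is closed under composition because its $w^*$-counterpart is (Proposition \ref{first-results}(4)), and associativity is inherited. Right semitopological means: for each fixed $V$, the map $U\mapsto UV$ is $\tau_Y\text{-}OT$ continuous. Given a net $U_\al\to U$ in $\tau_Y\text{-}OT$, transport back via $\iota^{-1}$ to get $U_\al\to U$ in $w^*$; then $U_\al V\to UV$ in $w^*$ by the left-semitopological property established in Proposition \ref{first-results}(4) (here "left'' on the $T(s)$-side corresponds to multiplication fixed on the right, matching the statement); finally transport forward via $\iota$ to conclude $U_\al V\to UV$ in $\tau_Y\text{-}OT$. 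I would also record that, since $\overline{ac}^{w^*}(\semig{T}{S})$ is $w^*$-compact and $\iota$ is continuous, the homeomorphism claim and the compactness claim are really two facets of one argument: a continuous bijection from a compact space to a Hausdorff space.

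The main obstacle is the very first step, namely making the double use of representation compatibility rigorous: representation compatibility is stated orbit-by-orbit, i.e. on each $\overline{ac\{T(t)y:t\in S\}}^{w^*}$, and one must be sure that the limit operator $T$ of a $w^*$-convergent net in $ac\,\semig{T}{S}$ genuinely maps $Y$ into $Y$ (so that $T\in L(Y)$ and the orbit sets make sense for $T$ as well) — this is where Definition \ref{defi-dual-representation}(3) and the fact that each $T_\al y$ stays inside the fixed orbit closure are needed. A secondary subtlety is that $\tau_Y\text{-}OT$ as defined uses the topology $\tau$ restricted to $Y$ pointwise, and one should confirm that the set $\overline{ac}^{\tau-OT}(\semig{T}{S})$, defined as the $\tau$-$OT$ closure of $ac\,\semig{T}{S}$, coincides with $\iota$ of the $w^*$-closure rather than being strictly larger — but this follows precisely from the two-sided homeomorphism property, since a $\tau$-$OT$ limit of operators from $ac\,\semig{T}{S}$ pulls back to a $w^*$-$OT$ limit and hence lies in $\overline{ac}^{w^*}(\semig{T}{S})$. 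Once these points are pinned down the rest is routine bookkeeping.
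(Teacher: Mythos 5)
Your argument is correct and is essentially the paper's own proof, just written out in full: the paper likewise observes that any $T\in\overline{ac}^{w^*}\fk{\semig{T}{S}}$ sends $y$ into $\overline{ac\bk{T(t)y:t\in S}}^{w^*}$, where representation compatibility identifies the $w^*$- and $\tau$-topologies, and then invokes Tychonov for compactness. Your extra care about well-definedness, the coincidence of the two closures, and the transport of the (left/right) semitopological structure fills in details the paper leaves implicit, but introduces no new method.
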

\begin{proof}
By definition, if $T\in\overline{ac}^{w^*}\fk{\semig{T}{S}},$ then we have $Tx \in \overline{ac\bk{T(t)x:t\in S}}^{w^*},$ which concludes the proof by an application of Tychonov.
\end{proof}

\section{Harmonic Analysis of Dual Representations}
The next results provide an abstract approach to these types of periodicity. Using the previous condition, which proves the Abelian structure, we apply the existing Haar measure, Pettis measurability criteria, and Mackey topology to obtain results in the style of \cite{DeGli1}. Additionally, two theorems give an answer, in some sense, regarding the distance between the reversible part and almost periodicity. This approach lead to two main theorems.

To identify almost periodicity in \cite{DeGli1}, the class of vectors below is defined.

\begin{defi}
For $\semig{T}{S}$ a dual representation, a vector $y\in Y$ is an eigenvector with a unimodular eigenvalue if for a map
$\funk{\la}{\scST}{\ce}$ with $\btr{\la(T)}=1$, we have $Ty=\la(T)y$ for all $T\in \scST$. We define
$$Y_{uds}:=\overline{span}\bk{y\in Y: y \mbox{ is an eigenvector with a unimodular eigenvalue }}.
$$
\end{defi}

\begin{pro} \label{start-harmonic-analysis}
Let $Y\subset X^*$, $\semig{T}{S}$ be a dual representation, $P$ be a minimal idempotent, $\tau$ be a representation-compatible topology, and
$$
\bk{S\ni t \mapsto <T(t)y,x>}\in W(S) \mbox{ for all } y\in Y, x\in X.
$$
Then, the minimal idempotent $P$ is unique, and $G:=P\scST P$ is a compact Abelian group. Furthermore, let $\Gamma $ be the character group og $G$, $\gamma\in \Gamma,$ and let $\rho$ denote the normalized Haar measure on $G.$ Then,
$$
S_{\gamma}:=\int_G\overline{\gamma}(S)Sd\rho(S)
$$
exists in the sense of \cite[Def 3.26, p. 74]{RudinFA} in $(L(Y),\tau-OT)$, and
$
S_{\gamma}\in \overline{ac}^{\tau-OT}\fk{\semig{T}{S}}.
$
For a given $V\in \overline{ac}^{\tau-OT}\fk{\semig{T}{S}},$ we have, for all $x\in Y$,
$$
S_{\gamma}Vx=\int_G\overline{\gamma}(S)S Vx d\rho(S)
$$
in $(Y,\tau).$
\end{pro}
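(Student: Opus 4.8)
The plan is to first establish that $G=P\scST P$ is a compact Abelian group under the weak almost periodicity hypothesis, then use the Haar integral on $G$ to define $S_\gamma$ as a vector-valued (Bochner-type, in the sense of \cite[Def 3.26, p.~74]{RudinFA}) integral in the locally convex space $(L(Y),\tau\text{-}OT)$, and finally check that $S_\gamma V$ factors the integral through $V$. Uniqueness of $P$ and the Abelian-group structure of $G$ follow from Proposition \ref{first-results}, Theorem \ref{T-abelian-condition} and Theorem \ref{PTP-is-compact-group}(3): the weak almost periodicity of all functions $t\mapsto\langle T(t)y,x\rangle$ forces $UV=VU$ on $Y$ for all $U,V\in\scST$ (hence on $\scST$ itself), so the minimal idempotent commutes with everything, is therefore unique, and $P\scST P$ is a compact Abelian group. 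I would record this first as it is essentially a citation of the previous section.

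Next I would construct $S_\gamma$. Since $\tau$ is representation compatible, the orbit-closure $(\overline{ac\{T(t)y:t\in S\}}^{w^*},w^*)$ is homeomorphic to its $\tau$-version, and by Proposition \ref{acT-compact} the map $\iota$ identifies $(\overline{ac}^{w^*}(\semig{T}{S}),w^*)$ with $(\overline{ac}^{\tau\text{-}OT}(\semig{T}{S}),\tau\text{-}OT)$ as a compact right semitopological semigroup. The integrand $S\mapsto\overline{\gamma}(S)S$ is a continuous map from the compact group $G$ into this compact convex set $K:=\overline{ac}^{\tau\text{-}OT}(\semig{T}{S})$ (continuity of $S\mapsto\gamma(S)$ is part of $\gamma\in\Gamma$; $S\mapsto S$ into $(L(Y),\tau\text{-}OT)$ is continuous because, fixing $x\in Y$, $S\mapsto Sx$ lands in the $\tau$-compact set $\overline{ac\{T(t)x\}}^{\tau}$ on which $\tau$ agrees with $w^*$, where evaluation is continuous). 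Then one invokes the existence theorem for the integral of a continuous function with values in a locally convex space against a regular Borel (here Haar) probability measure: the integral $S_\gamma=\int_G\overline{\gamma}(S)S\,d\rho(S)$ exists in $(L(Y),\tau\text{-}OT)$, and since $\rho$ is a probability measure and $K$ is compact and absolutely convex (balanced, because $|\overline{\gamma}(S)|=1$ so the integrand ranges over $K$, and $K$ is closed under such scalings), the barycenter lies in $K=\overline{ac}^{\tau\text{-}OT}(\semig{T}{S})$. Equivalently, evaluating at $x\in Y$, $S_\gamma x=\int_G\overline{\gamma}(S)Sx\,d\rho(S)$ in $(Y,\tau)$, the integral of a continuous $G$-valued-in-a-compact-convex-set map, which exists by the same principle in the (quasi-complete on bounded sets) space $(\overline{ac\{T(t)x\}}^{\tau},\tau)$.

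Finally, for the factorization identity $S_\gamma Vx=\int_G\overline{\gamma}(S)SVx\,d\rho(S)$ with $V\in\overline{ac}^{\tau\text{-}OT}(\semig{T}{S})$ and $x\in Y$: apply $V$-post-composition. The key point is that the map $W\mapsto WVx$ from $(\overline{ac}^{\tau\text{-}OT}(\semig{T}{S}),\tau\text{-}OT)$ to $(Y,\tau)$ is continuous and linear on the relevant convex set — continuity because by Proposition \ref{acT-compact} this is a right semitopological semigroup (multiplication continuous in the right variable, i.e.\ in $W$ with $V$ fixed) composed with the evaluation-at-$x$ map, which is $\tau\text{-}OT$-to-$\tau$ continuous. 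A continuous affine map commutes with the barycenter of a Borel probability measure, so it carries $S_\gamma=\int_G\overline{\gamma}(S)S\,d\rho(S)$ to $\int_G\overline{\gamma}(S)SVx\,d\rho(S)$; the left side is $S_\gamma Vx$. I expect the main obstacle to be the careful bookkeeping needed to justify that these integrals exist and their barycenters stay inside $\overline{ac}^{\tau\text{-}OT}(\semig{T}{S})$: one must verify enough completeness/compactness (the barycenter of a regular probability measure supported on a compact convex set exists and lies in that set — a Krein–Milman/Choquet-type fact valid once the ambient space's bounded sets behave well, which here is guaranteed because everything happens inside the $\tau$-compact orbit closures where $\tau$ coincides with $w^*$), and that the post-composition by $V$ really is continuous in the right variable, which is exactly the right-semitopological assertion of Proposition \ref{acT-compact}.
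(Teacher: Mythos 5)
Your proposal follows essentially the same route as the paper: uniqueness of $P$ from the commutativity forced by Theorem \ref{T-abelian-condition}, existence of the integral via the continuity of $S\mapsto\overline{\gamma}(S)S$ into the compact set $\overline{ac}^{\tau-OT}\fk{\semig{T}{S}}$ of Proposition \ref{acT-compact} together with Rudin's existence theorem, and the factorization identity by pushing the integral through the continuous linear evaluation $W\mapsto WVx$ from $(L(Y),\tau-OT)$ to $(Y,\tau)$. The only step the paper makes explicit that you elide is the appeal to Ellis' theorem that a compact semitopological group is a topological group, which is what licenses the existence of the normalized Haar measure on $G$ in the first place.
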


\begin{proof}
Note that for two minimal idempotents, by definition, $P_1=P_1P_2=P_2P_1=P_2.$
From Thm \ref{PTP-is-compact-group}, we find that $G$ is semitopological, and from an abstract harmonic analysis \cite{Ellis_cont}, we recall that any compact semitopological group is a topological group. Hence, we find the normalized Haar measure $\rho$ on $G,$ \cite[Thm 5.14, p. 123]{RudinFA}.

To prove the existence of the integral, we apply Theorem \cite[Thm. 3.27, pp. 74-75]{RudinFA} with respect to topology $\tau-OT.$
$$
\Funk{f}{(G,\tau-OT)}{L(Y),\tau-OT)}{S}{\overline{\gamma(S)}S}
$$
is continuous, and $f(G)\subset \overline{ac}^{\tau-OT}\fk{\semig{T}{S}},$ which is compact by Prop. \ref{acT-compact}.

Consequently, the integral exists and is an element of $\overline{ac}^{\tau-OT}\fk{\semig{T}{S}}.$ For the additional proof, note that, for $x^{\sun}\in Y,$

$$
\Funk{\delta_{x}}{(L(Y),\tau-OT)}{(Y,\tau)}{S}{Sx}
$$
is continuous, and for $V\in L(Y)$,
$$
S_{\gamma}Vx=\delta_{Vx}(S_{\gamma})=\delta_{Vx}\fk{\int_G\overline{\gamma}(S)Sd\rho(S)}=\int_G\overline{\gamma}(S)SVxd\rho(S);
$$
the claim becomes a consequence of \cite[p.85, Exercise 24]{RudinFA}.

\end{proof}

\begin{theo} \label{Y_uds=Y_a}
Let $\semig{T}{S}$ be a dual representation, $\tau$ be a representation-compatible topology, and
$$
\bk{S\ni t \mapsto <T(t)y,x>}\in W(S) \mbox{ for all } y\in Y, x\in X.
$$

Then,
$$
\overline{Y_a}^{\tau}\subset \overline{Y_{uds}}^{\tau}.
$$
\end{theo}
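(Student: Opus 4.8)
The plan is to use the harmonic-analytic machinery from Proposition~\ref{start-harmonic-analysis} to build, for each $y\in Y_a$, an explicit $\tau$-approximation by finite sums of unimodular eigenvectors, thereby placing $y$ in $\overline{Y_{uds}}^{\tau}$. First I would fix $y\in Y_a$; since $Y_a\subset PY$ (by Proposition~\ref{first-results}(5) and the construction via the minimal idempotent $P$), I may write $y=Py$. Under the standing hypothesis $\{t\mapsto\langle T(t)y,x\rangle\}\in W(S)$ for all $x\in X$, Proposition~\ref{start-harmonic-analysis} applies: $P$ is unique, $G=P\scST P$ is a compact Abelian group with normalized Haar measure $\rho$ and character group $\Gamma$, and for each $\gamma\in\Gamma$ the operator $S_\gamma=\int_G\overline{\gamma}(S)\,S\,d\rho(S)$ exists in $(L(Y),\tau\text{-}OT)$ and lies in $\overline{ac}^{\tau\text{-}OT}(\semig{T}{S})$.

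The key step is to identify $S_\gamma y$ as a unimodular eigenvector. For any $U\in G$, the translation invariance of Haar measure gives $U S_\gamma = \int_G \overline{\gamma}(S)\,US\,d\rho(S) = \int_G \overline{\gamma}(U^{-1}R)\,R\,d\rho(R) = \gamma(U)\,S_\gamma$ (using $\overline{\gamma}(U^{-1}R)=\gamma(U)\overline{\gamma}(R)$, valid since $\gamma$ is a character of the group $G$); applying this to $y$ and restricting to operators of the form $U\in G$ shows $U(S_\gamma y)=\gamma(U)(S_\gamma y)$. Thus $S_\gamma y$ is an eigenvector for the $G$-action with unimodular eigenvalue, and since $Py\mapsto PS_\gamma Py$ respects the splitting one checks $S_\gamma y\in PY$ and, extending the eigenvalue map $\la$ from $G$ to all of $\scST$ via $\la(T):=\gamma(PTP)$, that $S_\gamma y\in Y_{uds}$. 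Next I would invoke the inversion/approximation theorem for the compact Abelian group $G$: the orbit map $G\ni S\mapsto Sy$ is $\tau$-continuous into the $\tau$-compact set $\overline{ac\{T(t)y:t\in S\}}^\tau$ (representation compatibility converts $w^*$-continuity of $S\mapsto Sy$ into $\tau$-continuity), so the $G$-valued "function" $S\mapsto Sy$ has a Fourier expansion $\sum_{\gamma\in\Gamma}S_\gamma y$ whose Cesàro/Fej\'er means converge. Evaluating at the identity of $G$, which reproduces $Py=y$, yields that $y$ is a $\tau$-limit of finite linear combinations of the $S_\gamma y$, hence $y\in\overline{Y_{uds}}^\tau$. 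Finally, since $Y_{uds}$ is a closed span, $\overline{Y_a}^\tau\subset\overline{Y_{uds}}^\tau$ follows by taking closures.

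The main obstacle I anticipate is the vector-valued Fourier-analysis step: justifying that the Peter--Weyl/Fej\'er approximation of $S\mapsto Sy$ genuinely converges in the $\tau$-topology (not merely weakly or in some weaker operator sense), and that the means of the $S_\gamma y$ can be evaluated "at the identity" to recover $y$ rather than some averaged object. This requires care because $\tau$ is only assumed representation-compatible on the absolutely convex $w^*$-closed orbit hull, so one must check that all the finite averages of the $S_\gamma y$ stay inside this hull (they do, since each $S_\gamma\in\overline{ac}^{\tau\text{-}OT}\semig{T}{S}$ and the hull is absolutely convex and $\tau$-closed) and then transfer the classical scalar convergence, applied coordinatewise through functionals $x\in X$, back to $\tau$-convergence using the homeomorphism of Proposition~\ref{acT-compact} together with the definition of representation compatibility. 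A secondary technical point is verifying $Y_a=PY$ exactly (the excerpt only states $Y_a\subset Y_{rev}$ and $Y_a\oplus Y_0=Y$ with $Y_a=PY$ implicit in the proof of Proposition~\ref{first-results}(5)); I would make this identification explicit at the outset so that $y=Py$ is available.
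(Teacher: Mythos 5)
Your Part 1 --- constructing $S_{\gamma}=\int_G\overline{\gamma}(S)S\,d\rho(S)$ via Proposition~\ref{start-harmonic-analysis} and using translation invariance of the Haar measure to show $U S_{\gamma}=\gamma(U)S_{\gamma}$, so that $S_{\gamma}y$ is a unimodular eigenvector --- is essentially identical to the paper's first step. Where you genuinely diverge is in how you get from the family $\bk{S_{\gamma}y}_{\gamma\in\Gamma}\subset Y_{uds}$ back to $y$ itself. You propose a constructive Peter--Weyl/Fej\'er summability argument: approximate $y=Py$ by trigonometric-polynomial means of the orbit map $S\mapsto Sy$ and pass the scalar convergence to $\tau$-convergence via representation compatibility. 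The paper instead argues by duality: it sets $M=\overline{span}\bk{S_{\gamma}y:y\in Y,\gamma\in\Gamma}$, supposes a $\tau$-continuous functional $\phi$ vanishes on $M$ but not at some $y\in Y_a$, observes that then all Fourier coefficients $\int_G\overline{\gamma}(S)\phi(Sy)\,d\rho(S)$ vanish, invokes completeness of the characters in $L^2(G,\rho)$ to conclude $\phi(S y)=0$ a.e.\ and hence everywhere by continuity, contradicting $\phi(Py)\neq 0$; Proposition~\ref{tau-separated} then gives the inclusion of closures. Both routes rest on Peter--Weyl, but the paper's separation argument is shorter and needs only the ``vanishing Fourier coefficients $\Rightarrow$ zero function'' direction, while yours, if completed, yields explicit approximants in $Y_{uds}$ at the cost of importing the existence of approximate identities of trigonometric polynomials on $G$ and a vector-valued summability theorem.

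One concrete flaw in your plan: your justification that the Fej\'er means stay in the absolutely convex orbit hull is wrong as stated. The means have the form $\sum_{\gamma}\widehat{u}_{\al}(\gamma)S_{\gamma}y$, and the coefficients of a Fej\'er-type kernel are \emph{not} absolutely summable to at most $1$ (their sum equals the kernel's value at the identity, which blows up along the net), so these are not absolute convex combinations of the $S_{\gamma}y$ and your appeal to absolute convexity of the hull does not apply. The correct route is to work with the integral representation $\int_G u_{\al}(S)S^{-1}y\,d\rho(S)$ for a positive, normalized kernel $u_{\al}$: this is a $\tau$-limit of genuine convex combinations of points $Sy$, $S\in G$, hence stays in $\overline{co}\bk{Sy:S\in G}\subset\overline{ac\bk{T(t)y:t\in S}}^{w^*}$, and continuity of inversion (available because $G$ is a compact topological group by Ellis's theorem) makes the pointwise convergence work. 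With that repair, and with the identification $Y_a=PY$ made explicit as you note, your argument goes through, but it is materially longer than the paper's.
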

\begin{proof}[Proof of Theorem \ref{Y_uds=Y_a}]
\underline{Part 1:} In this part, we prove that unimodular eigenvectors are found as an image of the generalized Fourier transforms for a given $\gamma\in \Gamma.$
Similar to Prop. \ref{start-harmonic-analysis}, for the minimal idempotents $P,$  we find that $\rho$ is the normalized Haar measure on the Abelian compact topological group
$G=P \scST P.$

Furthermore, if $\Gamma$ denotes the character group, then for $\gamma\in\Gamma$, we can define
$$
S_{\gamma}:=\int_G\overline{\gamma}(S)Sd\rho(S) \in\overline{ac}^{\tau-OT}\fk{\semig{T}{S}} \subset L(Y).
$$
Consequently, for $y\in Y$, we have $S_{\gamma}y\in Y,$ and because $S_{\gamma}\in \overline{ac}^{\tau-OT}\fk{\semig{T}{S}}$ and $\overline{ac}^{\tau-OT}\fk{\semig{T}{S}}$ is Abelian by Theorem \ref{T-abelian-condition}, we find that $S_{\gamma}$ commutes with the operators in $G \subset \overline{ac}^{\tau-OT}\fk{\semig{T}{S}}.$ Using Prop. \ref{start-harmonic-analysis} for $R\in G, y\in Y$, we find that
\begin{eqnarray*}
RS_{\gamma}y&=&S_{\gamma}Ry=\delta_{Rx}(S_{\gamma})
=\delta_{Ry}\fk{\int_G \overline{\gamma}(S)Sd\rho(S)} 
=\int_G \overline{\gamma}(S)SRyd\rho(S) \\
&=&\int_G \overline{\gamma}(S)RSyd\rho(S) 
=\gamma(R)\int_G \overline{\gamma}(RS)RSyd\rho(S) \\
&=&\gamma(R)\int_G \overline{\gamma}(S)Syd\rho(S) 
\mbox {  apply \cite[Thm 5.14 (1),(2), p. 123]{RudinFA}}\\
&=&\gamma(R)S_{\gamma}y.
\end{eqnarray*}
Similarly, using the fact that $G$ is Abelian, we obtain
\begin{equation}
RS_{\gamma}=\gamma(R)S_{\gamma}=S_{\gamma}R.
\end{equation}
Because $P$ is the unit of $G,$ we have $\gamma(P)=1,$ and by the previous observation,
$  PS_{\gamma}=S_{\gamma}.$ Hence, for $T\in \scST,$ we find $PT\in G$ and
\begin{equation} \label{M-is-T-invariant}
TS_{\gamma}=TPS_{\gamma}=\gamma(TP)S_{\gamma}=\gamma(T)S_{\gamma}.
\end{equation}
This means that $S_{\gamma}Y$ consists of eigenvectors with unimodular eigenvalues $\la(T)=\gamma(T).$

\underline{Part 2:} Let $\Gamma$ be the character group of $G.$

We prove that $Y_{a}$ cannot be separated from
\begin{equation} \label{Definition-of-space-M}
M=\overline{span}\bk{S_{\gamma}x:y \in Y, \ \gamma\in \Gamma}
\end{equation}
with a $\tau-$continuous functional $\phi$, and we apply Proposition \ref{tau-separated}.

Because $M\subset X_{uds}\subset X_{a},$ we assume that there is a $y\in Y_{a} \backslash M.$ By the assumption, we find a $\tau$-continuous $\phi$ such that for $\phi(Py)=\phi(y) \not= 0$, $\phi_{|M}=0.$ Because for $x\in Y,$ $\Lambda(T):=\phi(Tx)$ is $\tau-OT$-continuous, we obtain

\begin{equation} \label{G_integrals_eq_null}
0=\phi(S_{\gamma}z)=\int_G\overline{\gamma}(S)\phi(Sz)d\rho(S)
\end{equation}
for all $\gamma\in \Gamma$ and $z\in Y.$

Because the characters form an orthonormal basis in $L_2(G,\rho)$---see \cite[p. 944]{DS}---we have
$$\bk{G\ni S\mapsto \phi(Sy)} =0  \ a.e.$$

Because $G$ carries the topology $\tau-OT,$ for $\phi$ $\tau$-continuous and $ z\in Y$, the functions

$$
\Funk{g}{(G,\tau-OT)}{\ce}{S}{\phi(Sz)}
$$

are continuous. Consequently,$ \bk{G\ni S\mapsto \phi(Sy)}$ is also zero, and we find a contradiction to $\phi(Py)\not=0,$ which completes the proof.
\end{proof}

The above result suggests the almost periodicity of the reversible part of an E.-wap solution.
Letting $\mu(\tau)$ be the Mackey topology that comes with $\tau,$ we have, by Mazur's Theorem, the following.

\begin{cor} \label{semigroup-equicontinuous}
Let $\semig{T}{S}$ be a dual representation, $\tau$ be a representation-compatible topology, and
$$
\bk{S\ni t \mapsto <T(t)y,x>}\in W(S), \mbox{ for all } y\in Y, x\in X.
$$

Then,
$$
\overline{Y_a}^{\mu(\tau)}\subset \overline{Y_{uds}}^{\mu(\tau)}.
$$
If $\semig{T}{S}$ is equicontinuous with respect to $\mu(\tau),$ then
$\overline{O(y)}^{\mu(\tau)}$ is $\mu(\tau)-$precompact for all $y\in Y_a.$
\end{cor}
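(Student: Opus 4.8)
The plan is to derive Corollary~\ref{semigroup-equicontinuous} from Theorem~\ref{Y_uds=Y_a} in two steps. First, for the inclusion $\overline{Y_a}^{\mu(\tau)}\subset\overline{Y_{uds}}^{\mu(\tau)}$, I would invoke Mazur's theorem in the form: since $Y_{uds}$ is a linear subspace, its closure with respect to any locally convex topology compatible with the dual pair $(Y,(Y,\tau)^*)$ agrees with its $\tau$-closure; as $\mu(\tau)$ is, by definition of the Mackey topology, the finest such compatible topology, we get $\overline{Y_{uds}}^{\mu(\tau)}=\overline{Y_{uds}}^{\tau}$. The same remark gives $\overline{Y_a}^{\mu(\tau)}\supset\overline{Y_a}^{\tau}$ trivially, but what I actually want is that a $\mu(\tau)$-limit point of $Y_a$ lies in $\overline{Y_{uds}}^{\tau}$: since $\mu(\tau)$ is finer than $\tau$, $\overline{Y_a}^{\mu(\tau)}\subset\overline{Y_a}^{\tau}\subset\overline{Y_{uds}}^{\tau}=\overline{Y_{uds}}^{\mu(\tau)}$, where the middle inclusion is exactly Theorem~\ref{Y_uds=Y_a}. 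That closes the first assertion cleanly.

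For the second assertion I would argue as follows. Fix $y\in Y_a$. By the first part, $y\in\overline{Y_{uds}}^{\mu(\tau)}$, so there is a net in $Y_{uds}$ converging to $y$ in $\mu(\tau)$; in fact, since $Y_{uds}=\overline{\mathrm{span}}\{\text{unimodular eigenvectors}\}$ and we may pass to finite linear combinations, $y$ is a $\mu(\tau)$-limit of vectors $z$ each of which is a finite sum of unimodular eigenvectors. For such a $z=\sum_k c_k y_k$ with $T(t)y_k=\lambda_k(T(t))y_k$ and $|\lambda_k(\cdot)|=1$, the orbit $O(z)=\{\sum_k c_k\lambda_k(T(t))y_k:t\in S\}$ lies in the finite-dimensional space $\mathrm{span}\{y_1,\dots,y_m\}$ and is bounded there (the coefficients are bounded by $|c_k|$), hence $\overline{O(z)}^{\mu(\tau)}$ is $\mu(\tau)$-compact. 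Then I would use the equicontinuity hypothesis: if $\{T(s)\}_{s\in S}$ is $\mu(\tau)$-equicontinuous, the map $z\mapsto O(z)$ is "uniformly continuous" in the sense that given a $\mu(\tau)$-neighbourhood $U$ of $0$ there is a $\mu(\tau)$-neighbourhood $W$ of $0$ with $T(s)W\subset U$ for all $s$; consequently $\overline{O(y)}^{\mu(\tau)}$ is totally bounded whenever it is approximated by the totally bounded sets $\overline{O(z)}^{\mu(\tau)}$. Passing to the completion (or arguing directly with total boundedness, which is what "precompact" means), this yields that $\overline{O(y)}^{\mu(\tau)}$ is $\mu(\tau)$-precompact.

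More precisely, to show $\overline{O(y)}^{\mu(\tau)}$ is precompact it suffices to show $O(y)$ is totally bounded for $\mu(\tau)$. Given a $\mu(\tau)$-neighbourhood $U$ of $0$, pick a symmetric $\mu(\tau)$-neighbourhood $V$ with $V+V+V\subset U$, and by equicontinuity a $\mu(\tau)$-neighbourhood $W$ with $T(s)W\subset V$ for all $s\in S$. Choose $z\in Y_{uds}$ (a finite combination of unimodular eigenvectors) with $y-z\in W$. Then for every $s$, $T(s)y-T(s)z=T(s)(y-z)\in V$, so $O(y)$ is contained in the $V$-enlargement of $O(z)$. Since $O(z)$ is totally bounded, cover it by finitely many $V$-translates; their $V$-enlargements then cover $O(y)$ by $V+V\subset U$-translates. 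Hence $O(y)$, and therefore its closure, is $\mu(\tau)$-precompact.

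The main obstacle I anticipate is the interplay between Mazur's theorem and the non-metrizable, merely locally convex setting: one must be careful that $Y_{uds}$ is genuinely a linear subspace (it is, by definition it is $\overline{\mathrm{span}}$ of the eigenvectors) so that its convex closure equals its linear closure, and that $\mu(\tau)$ and $\tau$ have the same continuous linear functionals on $Y$ — this is precisely the defining property of the Mackey topology, so the only real content is citing it correctly. The second, more technical obstacle is justifying that a finite sum of unimodular eigenvectors has $\mu(\tau)$-precompact orbit: this rests on the orbit living in a finite-dimensional subspace where $\mu(\tau)$ restricts to the usual (unique Hausdorff vector) topology and on the boundedness of the scalar orbit $\{\lambda_k(T(s))\}_{s}$ on the unit circle — routine once spelled out, but it is where the "unimodular" hypothesis is actually used. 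Everything else is the standard total-boundedness-plus-equicontinuity approximation argument sketched above.
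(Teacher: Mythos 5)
Your proposal is correct and follows essentially the same route as the paper: the first inclusion via Mazur's theorem applied to Theorem \ref{Y_uds=Y_a}, and the second via the standard equicontinuity-plus-total-boundedness approximation of $O(y)$ by orbits of elements of $Y_{uds}$. In fact you supply a detail the paper's proof only asserts, namely why the approximating orbits $O(z)$ are themselves totally bounded (finite linear combinations of unimodular eigenvectors have bounded orbits in a finite-dimensional subspace).
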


\begin{proof}[Proof of Cor. \ref{semigroup-equicontinuous}]
We find a net $\net{y}{\al}{A}\subset Y_{uds}$ with $\mu(\tau)$-limit $y\in \overline{Y_a}^{\mu(\tau)}.$
For every $W, V,U\in \mathcal{U}(0),$ with $V+V\subset U,$ we find a net of finite sets, $\net{F}{\al}{A}$ such that $O(y_{\al})\subset F_{\al}+V.$ From the convergence and equicontinuity, we find $W$ and $\al_0$ such that   $x-x_{\gamma}\in W$  and $T(t)(y-y_{\gamma})\subset V$ for all $\gamma\ge \al_0,$  $t\in S.$ Hence,
$$
O(y)\subset O(y_{\gamma})+V\subset F_{\gamma}+U.
$$
\end{proof}

The result of Fr\'echet \cite{Frechet} for asymptotically almost-periodic functions is a direct consequence.

\begin{cor}\label{aap-consequence}
Let $Z$ be a Banach space, let $\bk{T(t)}_{t\ge0}$ be a bounded semigroup on $Z,$ and let $O(x)$ be relatively compact for all $x\in Z$; then,
$Z_{uds}=Z_a.$
\end{cor}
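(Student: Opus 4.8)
\textbf{Proof plan for Corollary \ref{aap-consequence}.}
The plan is to recognize this as the special case $S=\rep$, $X=Z^*$ — or rather, to set things up so that the relatively compact orbits provide the needed representation-compatible topology and the E.-wap hypothesis. First I would observe that since $\bk{T(t)}_{t\ge0}$ is a bounded $C_0$-type (or merely bounded) semigroup on $Z$ with every orbit $O(x)$ relatively compact in the norm topology, we may take $Y=Z$ viewed inside its bidual, or more directly work with the norm topology $\tau=\nrm{\cdot}$ itself: relative norm-compactness of $O(x)$ forces $\overline{ac\bk{T(t)x:t\in S}}^{w^*}=\overline{ac\bk{T(t)x:t\in S}}^{\nrm{\cdot}}$ (the weak-star, weak, and norm closures of the absolutely convex hull of a relatively norm-compact set coincide), exactly as in the Remark following Definition \ref{defi-dual-representation}. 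Hence the norm topology is representation-compatible, and the identity map in that Remark's situation is a homeomorphism on the relevant orbit closures.

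Next I would verify the E.-wap hypothesis of Theorem \ref{Y_uds=Y_a}, namely that $\bk{S\ni t\mapsto <T(t)y,x>}\in W(S)$ for all $y\in Z$ and all $x$ in a separating set (here $x\in Z^*$). This follows because $O(y)$ relatively norm-compact implies $O(y)$ relatively weakly compact, so $y$ is E.-wap as a vector; then for each fixed $x\in Z^*$ the scalar function $t\mapsto <T(t)y,x>$ has orbit under translation contained in the image of the weakly compact $\overline{O(y)}$ under the (weak-weak continuous) evaluation-composition map, hence is relatively weakly compact in $C_b(S)$ — that is, it lies in $W(S)$. With both hypotheses in hand, Theorem \ref{Y_uds=Y_a} yields $\overline{Z_a}^{\nrm{\cdot}}\subset\overline{Z_{uds}}^{\nrm{\cdot}}$, and since $Z_{uds}$ is by definition already norm-closed (it is a $\overline{span}$), this gives $Z_a\subset Z_{uds}$. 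The reverse inclusion $Z_{uds}\subset Z_a$ is the easy direction: every unimodular eigenvector $y$ (with $Ty=\la(T)y$, $\btr{\la(T)}=1$) satisfies $\overline{O(y)}\subset\bk{\la(T)y:T\in\scST}$, which is a continuous image of the compact $\scST$ into a one-dimensional space, hence compact, so $y$ is E.-wap and reversible; thus $y\in Z_{rev}$, and combined with Proposition \ref{first-results}(5) (the splitting $Z=Z_a\oplus Z_0$) together with Proposition \ref{rev-equals-flight-equals-zero} one places $y\in Z_a$. Taking closed spans, $Z_{uds}\subset Z_a$.

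The main obstacle I anticipate is the bookkeeping needed to legitimately cast a $C_0$-semigroup on $Z$ — whose natural action is not on a dual space — into the "dual representation on $Y\subset X^*$" framework of Definition \ref{defi-dual-representation}. The clean route is $X:=Z^*$, $Y:=j_Z Z\subset Z^{**}=X^*$, and $T(t):=T(t)^{**}_{|j_Z Z}=j_Z T(t) j_Z^{-1}$; one must check conditions (1)--(4) of Definition \ref{defi-dual-representation}, but (1) and (2) are immediate, and (3), (4) follow from relative weak compactness of orbits exactly as in the proof of Proposition \ref{embed-DeGli-theory} (indeed this corollary is essentially Proposition \ref{embed-DeGli-theory} plus Theorem \ref{Y_uds=Y_a} specialized, and one should cite it). A secondary point of care is ensuring the minimal idempotent $P$ exists and is unique under these hypotheses so that $G=P\scST P$ is a genuine compact Abelian group — but this is delivered by Proposition \ref{start-harmonic-analysis} once the $W(S)$-condition is established, so it introduces no new difficulty.
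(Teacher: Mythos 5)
Your proposal is correct and follows essentially the same route as the paper, whose entire proof is ``choose $\tau$ equal to the norm topology and apply Cor.~\ref{semigroup-equicontinuous}'' (itself a direct consequence of Theorem~\ref{Y_uds=Y_a}). You simply make explicit the details the paper leaves implicit: the embedding via Proposition~\ref{embed-DeGli-theory}, the verification that the norm topology is representation-compatible and that the $W(S)$-hypothesis holds when orbits are relatively norm compact, and the easy reverse inclusion $Z_{uds}\subset Z_a$.
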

\begin{proof}
Choose $\tau$ equal to the norm topology and apply Cor. \ref{semigroup-equicontinuous}.
\end{proof}

Using that the Mackey topology $\mu(X^*,X)$ of $\sigma(X^*,X)$ is given by the uniform convergence on the weakly compact set of $X,$ we obtain the following:
\begin{cor} \label{reccurent_equals_unimodular}
If $\overline{\semig{T^{\sun}}{\rep}}^{w^*}$ is Abelian, then
$$
\overline{X^{\sun}_{uds}}^{\mu(X^{*},X)}=\overline{ X_a^{\sun}}^{\mu(X^{*},X)}.$$
\end{cor}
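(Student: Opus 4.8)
The plan is to derive Corollary \ref{reccurent_equals_unimodular} as an instance of Corollary \ref{semigroup-equicontinuous} applied to the dual representation $\semig{T^{\sun}}{\rep}$ on $Y=X^{\sun}\subset X^*$, with the locally convex topology $\tau=\sigma(X^*,X)$ restricted to $X^{\sun}$. First I would check that $\tau=w^*$ is representation-compatible in the sense of the definition preceding Proposition \ref{acT-compact}: since $\tau$ literally is the $w^*$-topology, the identity map on $\overline{ac\bk{T^{\sun}(t)y:t\in\rep}}^{w^*}$ is trivially a homeomorphism, so this hypothesis is automatic. Next, the standing assumption $\bk{t\mapsto <T^{\sun}(t)y,x>}\in W(\rep)$ for all $y\in X^{\sun}$, $x\in X$ is exactly the condition that $\overline{\semig{T^{\sun}}{\rep}}^{w^*}$ is Abelian by Theorem \ref{T-abelian-condition}; since that Abelianness is precisely the hypothesis of the corollary, the hypotheses of Corollary \ref{semigroup-equicontinuous} are met, giving $\overline{X^{\sun}_a}^{\mu(\tau)}\subset\overline{X^{\sun}_{uds}}^{\mu(\tau)}$.

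The second ingredient is the identification of the Mackey topology $\mu(\tau)$. Here I would invoke the fact recalled just before the statement: the Mackey topology $\mu(X^*,X)$ associated to $\sigma(X^*,X)$ is the topology of uniform convergence on the (absolutely convex, $\sigma(X,X^*)$-)compact subsets of $X$. Restricting this topology to the subspace $X^{\sun}$ gives $\mu(\tau)$ on $X^{\sun}$, so the inclusion from Corollary \ref{semigroup-equicontinuous} reads directly as $\overline{X^{\sun}_a}^{\mu(X^*,X)}\subset\overline{X^{\sun}_{uds}}^{\mu(X^*,X)}$.

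For the reverse inclusion $\overline{X^{\sun}_{uds}}^{\mu(X^*,X)}\subset\overline{X^{\sun}_a}^{\mu(X^*,X)}$, I would argue that every unimodular eigenvector lies in $X^{\sun}_a$: if $T^{\sun}(t)y=\la(t)y$ with $\btr{\la(t)}=1$, then $y$ is reversible (run a convergent subnet of $\la$ on the unit circle and take the conjugate limit to return to $y$), hence $y\in X^{\sun}_{rev}$; combined with $y=Py$ one gets $y\in X^{\sun}_a$ in the notation of Proposition \ref{first-results}(5). Thus $X^{\sun}_{uds}\subset X^{\sun}_a$ as a linear subspace (using that $X^{\sun}_a$ is closed under the span and $\tau$-closed), and taking $\mu(X^*,X)$-closures preserves the inclusion. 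Combining the two inclusions yields the claimed equality of closures.

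The main obstacle I expect is the bookkeeping around the equicontinuity clause of Corollary \ref{semigroup-equicontinuous}: to conclude anything one needs $\semig{T^{\sun}}{\rep}$ to be $\mu(\tau)$-equicontinuous, i.e. equicontinuous for uniform convergence on weakly compact subsets of $X$. This should follow because the adjoints $T^{\sun}(t)=T^*(t)_{|X^{\sun}}$ are restrictions of $T^*(t)$, which are $w$-$w$ continuous contractions on $X^*$ and hence map the polar of a weakly compact set into (a bounded multiple of) itself; but making this precise — that $T(t)$ maps weakly compact sets of $X$ into weakly compact sets, so the relevant seminorms transform controllably — is the one place where the uniform boundedness $\sup_t\nrm{T(t)}<\infty$ and the contraction-type estimate really have to be used carefully rather than quoted. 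Once equicontinuity is in hand, the rest is the formal chaining of the two cited results.
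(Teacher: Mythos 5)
Your proposal is correct and follows the route the paper intends: the corollary is stated immediately after Corollary \ref{semigroup-equicontinuous} together with the identification of $\mu(X^*,X)$ as uniform convergence on weakly compact subsets of $X$, so the intended proof is exactly your chain --- take $\tau=\sigma(X^*,X)|_{X^{\sun}}$ (trivially representation-compatible), convert the Abelian hypothesis into the E.-wap hypothesis via Theorem \ref{T-abelian-condition}, apply the first conclusion of Corollary \ref{semigroup-equicontinuous}, and close the equality with the elementary inclusion $X^{\sun}_{uds}\subset X^{\sun}_a$ (which the paper itself uses without proof inside Theorem \ref{Y_uds=Y_a}, and which your eigenvalue argument $\la(P)^2=\la(P)$, $\btr{\la(P)}=1$, hence $Py=y$, justifies cleanly). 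The one thing to correct is your closing worry: the equicontinuity clause of Corollary \ref{semigroup-equicontinuous} is \emph{not} needed here. That corollary asserts the inclusion $\overline{Y_a}^{\mu(\tau)}\subset\overline{Y_{uds}}^{\mu(\tau)}$ unconditionally (it is Theorem \ref{Y_uds=Y_a} plus the Mazur-type fact that the $\tau$- and $\mu(\tau)$-closures of the convex set $Y_{uds}$ coincide); equicontinuity is hypothesized only for the separate, additional conclusion that orbits of vectors in $Y_a$ are $\mu(\tau)$-precompact, which Corollary \ref{reccurent_equals_unimodular} does not use. So no verification of $\mu(\tau)$-equicontinuity of $\semig{T^{\sun}}{\rep}$ is required, and your proof is complete without it.
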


Separability is a further concept that applies. With the pointwise verification of the Abelian structure, we can give the criterion below for a vector to be a member of $X_{uds}^{*},$ which is the second main result of this section, based mainly on a harmonic analysis, the Pettis-measurability criteria and Thm \ref{T-abelian-condition}.

\begin{theo}\label{separable-X_uds}
Let $\semig{T}{S}$ be a dual representation, $\tau$ be a representation-compatible topology with $\sigma(X^*,X)\subset \tau,$ and
$$
\bk{S\ni t \mapsto <T(t)y,x>}\in W(S), \mbox{ for all } y\in Y, x\in X.
$$
Then, the following are equivalent for $y\in Y_a$:
\begin{enumerate}
\item $\overline{O(y)}^{\tau}$ is norm separable.
\item $y\in Y_{uds}^{*}$.
\item $O(y)$ is relatively norm compact.
\end{enumerate}
\end{theo}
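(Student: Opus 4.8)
The plan is to establish the cycle (1)$\Rightarrow$(3)$\Rightarrow$(2)$\Rightarrow$(1), using the harmonic-analytic machinery already developed, with the heaviest lifting in the step (1)$\Rightarrow$(3).

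First I would dispose of the easy implications. For (3)$\Rightarrow$(2): if $O(y)$ is relatively norm compact, then the norm topology restricted to $\overline{O(y)}$ agrees with the $w^*$-topology (it is a continuous bijection from a compact space to a Hausdorff space), so the norm topology is representation compatible on this orbit; invoking Corollary \ref{aap-consequence} in spirit, or more directly Theorem \ref{Y_uds=Y_a} with $\tau$ the norm topology, we get $y\in\overline{Y_a}^{\nrm{\cdot}}\subset\overline{Y_{uds}}^{\nrm{\cdot}}=Y_{uds}^{*}$, where the last equality is the definition of the norm-closure (the superscript $*$ denotes the norm-closed span). For (2)$\Rightarrow$(1): if $y\in Y_{uds}^{*}$, then $y$ is a norm limit of finite linear combinations of unimodular eigenvectors; for an eigenvector $z$ with $T(s)z=\la(s)z$, the orbit $O(z)$ lies on a one-dimensional norm-compact circle $\{\la(s)z\}$, hence a finite combination has norm-separable (indeed norm-compact) orbit closure, and norm separability passes to the $w^*$-closure of the orbit of the norm-limit $y$ by an $\varepsilon$-approximation argument using boundedness of $\semig{T}{S}$ — so $\overline{O(y)}^{\tau}$ is norm separable since $\sigma(X^*,X)\subset\tau$ makes the $\tau$-closure smaller than (or equal to, on bounded sets) the $w^*$-closure.

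The substantive implication is (1)$\Rightarrow$(3). Here the strategy is: assume $\overline{O(y)}^{\tau}$ is norm separable and $y\in Y_a$; I want relative norm compactness of $O(y)$. Since $y\in Y_a\subset Y_{rev}$ and the hypothesis $\bk{t\mapsto <T(t)y,x>}\in W(S)$ holds, Theorem \ref{Y_uds=Y_a} gives $y\in\overline{Y_{uds}}^{\tau}$. The key is to upgrade $\tau$-approximation to norm-approximation using separability via a Pettis-measurability argument on the compact Abelian group $G=P\scST P$. Concretely: the map $G\ni S\mapsto Sy\in Y$ is $\tau$-$\tau$-continuous with range in the norm-separable set $\overline{O(Py)}^{\tau}$ (noting $Py=y$ since $y$ is reversible so $Py=y$ up to the splitting), hence by the Pettis measurability criterion this map is strongly (Bochner) measurable. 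Then the generalized Fourier coefficients $S_\gamma y=\int_G\overline{\gamma}(S)Sy\,d\rho(S)$ exist as \emph{Bochner} integrals in the norm topology, the Fourier series $\sum_\gamma S_\gamma y$ converges in $L^2(G,\rho;Y)$, and Fej\'er-type averaging (convolution with an approximate identity built from characters) yields norm-approximation of $S\mapsto Sy$ by finite trigonometric polynomials, uniformly on $G$; evaluating at $S=P$ gives $y$ as a norm-limit of finite combinations of the unimodular eigenvectors $S_\gamma y$ (by \eqref{M-is-T-invariant}, $TS_\gamma y=\gamma(T)S_\gamma y$). Finally, a finite combination $\sum_{j}S_{\gamma_j}y$ has relatively norm-compact orbit (each summand's orbit lies on a circle times a fixed vector), and since $\semig{T}{S}$ acts by contractions, a uniform norm-approximation of $y$ by such vectors forces $O(y)$ to be totally bounded, hence relatively norm compact.

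The main obstacle I anticipate is making the Bochner-integral / Fej\'er-summability step rigorous on a merely \emph{semitopological} compact group that we have argued (via \cite{Ellis_cont}) is actually topological: one must check that the strong measurability of $S\mapsto Sy$ (obtained from norm separability plus $w^*$-measurability via Pettis) indeed yields norm-convergent Fourier/Fej\'er approximations whose partial sums lie in $Y_{uds}^{*}$, and that evaluating these approximations at the identity $P$ recovers $y$ in norm rather than only in $\tau$. The point where care is needed is that $S_\gamma$ was originally constructed only as a $\tau$-$OT$ integral in Proposition \ref{start-harmonic-analysis}; one needs that under the separability hypothesis this integral additionally converges in the norm-of-$L(Y)$ sense \emph{when applied to the specific vector} $y$, which is exactly what Pettis measurability of $S\mapsto Sy$ provides. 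Once that bridge is in place, the contraction property of the semigroup does the rest to convert norm-approximation by almost periodic vectors into relative norm compactness of the orbit, closing the cycle.
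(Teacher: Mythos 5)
Your overall architecture is right --- Pettis measurability of $S\mapsto Sy$ from norm separability, the Bochner upgrade of $S_{\gamma}y$, and the easy implications via the circle orbits of unimodular eigenvectors all match the paper --- but the core of your (1)$\Rightarrow$(3) has a genuine gap, and it is exactly the one you flag yourself as ``the main obstacle.'' The Fej\'er/approximate-identity scheme produces trigonometric polynomials $f_n$ converging to $f(S)=Sy$ in $L^1(G,\rho;Y)$ (or $L^2$), but to recover $y$ you must evaluate at the single point $S=P$, and pointwise norm convergence there requires norm continuity of $S\mapsto Sy$ at $P$. You only have $\tau$-continuity (from the $\tau$-$OT$ structure) plus strong measurability; these give $\tau$-convergence of $f_n(P)$ to $y$, which lands you back at $y\in\overline{Y_{uds}}^{\tau}$ --- precisely the content of Theorem \ref{Y_uds=Y_a}, not the stronger membership $y\in Y_{uds}^{*}$ that the theorem asserts. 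So as written the argument does not close.

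The paper avoids pointwise evaluation altogether. It sets $M=\overline{span}\bk{S_{\gamma}y:\gamma\in\Gamma}$, passes to the quotient $q:Y_a\to Y_a/M$, and uses the separability hypothesis to produce a countable total family $\bk{z_n^*}$ on $Z=\overline{span}\bk{qGy}$. Since $qS_{\gamma}y=0$, orthonormality of the characters in $L_2(G,\rho)$ forces $<qSy,z_n^*>=0$ for $\rho$-a.e.\ $S$, for each $n$; a countable union of null sets then yields \emph{some} $S_0\in G$ with $qS_0y=0$, i.e.\ $S_0y\in M$. The decisive step is algebraic, not analytic: because $G$ acts as a group on $Y_a$ and $M$ is translation invariant (by equation (\ref{M-is-T-invariant})), there is $T\in G$ with $TS_0y=y$, hence $y\in M\subset Y_{uds}^{*}$. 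This replaces your ``evaluate the Fej\'er means at the identity'' by ``find one good group element and invert,'' which needs no norm continuity. If you want to salvage your route, you would have to prove norm continuity of $S\mapsto Sy$ at $P$ (or an a.e.\ Lebesgue-point argument plus a way to move a.e.\ information to the identity), and the group-inversion trick is in effect the only mechanism the hypotheses provide for that transfer; I recommend adopting it.
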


\begin{proof}[Proof of Theorem \ref{separable-X_uds}]
As $\overline{ac}^{\tau-OT}\fk{\semig{T}{S}}$ is Abelian, $G= P\scST $ is a compact Abelian topological group \cite{Ellis_cont}.
The splitting is a consequence of Prop.
From Prop. \ref{start-harmonic-analysis}, we have, for $y\in Y,$
$$
S_{\gamma}y=\int_{G}\overline{\gamma}(S)Sy d\rho(S)\in Y.
$$
By \cite[Cor. 4 pp. 42-43]{DiestelUhl}, we find that this is a Bochner integral, which is an element of $X^*.$ Moreover, because $\sigma(X^*,X)\subset \tau,$ the expression above
coincides on $X$ with the integral defined in the proof of Theorem \ref{Y_uds=Y_a}; hence, it becomes an element of $Y.$

For $R\in \scST$, we have

\begin{eqnarray*}
RS_{\gamma}y&=&S_{\gamma}Ry=\delta_{Ry}(S_{\gamma})=\int_G\overline{\gamma}(S)SRy d\rho(S) =\int_{G}\overline{\gamma}(S)RSy d\rho(S)=\gamma(R)S_{\gamma}y.
\end{eqnarray*}

Defining $$M=\overline{span}\bk{S_{\gamma}y:y\in Y, \gamma\in \Gamma},$$ we have $M\subset Y_a$.

For $y\in Y_a$ and $q:Y_a\to Y_a/M$ as the quotient map, if $Z=\overline{span}\bk{qGy}$, then by assumption, $(Z,\nrm{\cdot})$ is separable.

Consequently, $(B_{Z^*}, w^*)$ is separable (compact metrizable). We choose $\bk{z^*_n}_{n\in\za}$ dense in $(B_{Z^*}, w^*)$.

By definition, $S_{\gamma}y\in M.$ Consequently, for the sequence of bounded linear functionals
$$
\Funk{U_n}{Y}{\ce}{u}{<qu,z_n^*>,}
$$
due to Bochner integrability, we obtain
$$
0=<qS_{\gamma}y,z_n^*>=\int_G\overline{\gamma}(S)<qSy,z_n^*>d\rho(S)
$$
for all $\gamma\in \Gamma$ and $n\in \za.$ Using $\bk{\gamma}_{\gamma\in \Gamma}$ as an orthonormal basis in $L^2(G,\rho)$,
$$
<qSy,z_n^*>=0 \mbox{ a.e. for all } n\in \za.
$$
Thus, for sets $A_n\subset K$ with $\rho(A_n)=0$, we have
$$
<qSy,z_n^*>=0 \mbox{ for all } S\in G\backslash A_n, n\in \za.
$$
Let $A=\bigcup_{n\in\za}A_n;$ then, $\rho(A)=0$, and
$$
<qSy,z_n^*>=0 \mbox{ for all }S\in G\backslash A, n\in \za.
$$
Using $\bk{z^*_n}_{n\in\za}$ totally on $Z$, we find an $S\in G$ with $qSy=0. $ Consequently, $Sy\in M.$ Because of Part 1 of Theorem \ref{Y_uds=Y_a} and equation (\ref{M-is-T-invariant}), the space $M$ is translation invariant, and for $y\in X_a$, we find that using $G$ as a group on $Y_a,$ there is a $T\in G$ such that $TSy=y$ and, therefore, $y\in M\subset X_{uds}^{\sun}.$

(2)$\Rightarrow$ (3): Let $y \in Y_{uds}$. Then, $y$ is the limit of the linear combinations of the unimodular vectors $\bk{x_i^{n}}_{i=1..m_n,n\in\za}\subset Y_a$,
i.e., those satisfying $Tx_i^n=\la^n_i(T)x_i^n.$
Consequently, $O(x_i^n)$ is norm compact and therefore the orbit of the linear combination.

It follows that if the vectors $\bk{x_n}_{n\in\za}$ have relatively norm-compact orbits and $x_n \to x,$ then $O(x)$ is relatively norm compact. Note that for some constant $C>0$,
$$
\nrm{Tx_n-Tx}\le C\nrm{x-x_n},
$$
which concludes the proof.
\end{proof}

As proposed, we show that the separability of the orbit indicates almost periodicity. Note that if $\semig{T^{\sun}}{\rep}$ is a sun-dual semigroup and $x^{\sun}\in X_{uds}^{\sun},$ then the mapping $\bk{t\mapsto T^{\sun}(t)x^{\sun}}$ is almost periodic. We give a criterion stating when an element in $X_a^{\sun}$ is in $X_{uds}^{\sun}.$

\begin{theo}\label{separable-theorem}
If $x^{\sun}\in X_a^{\sun}$ and $\overline{\semig{T^{\sun}}{\rep}}$ is Abelian, then the following are equivalent:
\begin{enumerate}
\item $\overline{O(x^{\sun})}^{\sigma(X^{\sun},X)}$ is norm separable.
\item $x^{\sun}\in X_{uds}^{\sun}$.
\item $O(x^{\sun})$ is relatively norm compact.
\end{enumerate}
\end{theo}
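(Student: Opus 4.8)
The plan is to deduce Theorem~\ref{separable-theorem} from Theorem~\ref{separable-X_uds} by exhibiting the sun-dual setting as an instance of the abstract framework. First I would set $Y=X^{\sun}$, $S=\rep$, and $\semig{T}{S}=\semig{T^{\sun}}{\rep}$. By the Example following Proposition~\ref{first-results}, $\semig{T^{\sun}}{\rep}$ is a dual representation on $X^{\sun}$. For the topology $\tau$ I would take $\tau=\sigma(X^{\sun},X)$; since the orbits of a sun-dual semigroup live inside $X^{\sun}$ and $\sigma(X^*,X)$ restricted to bounded subsets of $X^{\sun}$ agrees with the $w^*$-topology, $\tau$ is representation compatible, and trivially $\sigma(X^*,X)\subset\tau$ on $Y$. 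What remains before I can quote Theorem~\ref{separable-X_uds} verbatim is the hypothesis that $\bk{t\mapsto <T^{\sun}(t)x^{\sun},x>}\in W(\rep)$ for all $x^{\sun}\in X^{\sun}$, $x\in X$: this I would obtain from Theorem~\ref{T-abelian-condition} together with the hypothesis that $\overline{\semig{T^{\sun}}{\rep}}$ is Abelian. Indeed, $\scST$ Abelian forces $UVx^{\sun}=VUx^{\sun}$ for all $U,V\in\scST$ and every $x^{\sun}$, so by the ``if'' direction of Theorem~\ref{T-abelian-condition} the scalar orbit function lies in $W(\rep)$ for a separating — in fact all — $x\in X$.

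With these identifications in place, Theorem~\ref{separable-X_uds} applied to $y=x^{\sun}\in X_a^{\sun}$ gives exactly the equivalence of the three conditions: (1) $\overline{O(x^{\sun})}^{\sigma(X^{\sun},X)}$ is norm separable, (2) $x^{\sun}\in X_{uds}^{\sun}$, and (3) $O(x^{\sun})$ is relatively norm compact. I would also recall, as remarked just before the statement, that condition (2) is the one that actually yields almost periodicity of $\bk{t\mapsto T^{\sun}(t)x^{\sun}}$: for a unimodular eigenvector $x^{\sun}$ with $T^{\sun}(t)x^{\sun}=\la(t)x^{\sun}$ the orbit is a scalar multiple of a single vector traced by a unimodular character of $\rep$, hence relatively norm compact, and one extends by taking closed linear spans and using the uniform Lipschitz bound $\nrm{T^{\sun}(t)(x_n-x)}\le C\nrm{x_n-x}$ from boundedness of the semigroup — this is precisely the (2)$\Rightarrow$(3) step inside the proof of Theorem~\ref{separable-X_uds}.

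The only genuinely delicate point is the verification that $\tau=\sigma(X^{\sun},X)$ is representation compatible, i.e.\ that on each set $\overline{ac\bk{T^{\sun}(t)x^{\sun}:t\in\rep}}^{w^*}$ the $w^*$-topology of $X^*$ and the relative $\sigma(X^{\sun},X)$-topology coincide as homeomorphic copies. Since such a set is $w^*$-compact and contained in $X^{\sun}$, and since $\sigma(X^{\sun},X)$ is by definition the restriction to $X^{\sun}$ of $\sigma(X^*,X)=w^*$, the two topologies literally agree there; the homeomorphism claim is then automatic from compactness and Hausdorffness. I expect the main obstacle to be purely bookkeeping: making sure the absolutely convex $w^*$-closed hulls genuinely remain inside $X^{\sun}$ (so that $\tau$ makes sense on them) — but this follows from Definition~\ref{defi-dual-representation}(3) for the dual representation $\semig{T^{\sun}}{\rep}$, already checked in the Example. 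Once that is noted, the theorem is a direct corollary of Theorem~\ref{separable-X_uds}.
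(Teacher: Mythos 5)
Your proposal is correct and follows exactly the route the paper intends: the paper states Theorem~\ref{separable-theorem} without proof as a direct specialization of Theorem~\ref{separable-X_uds}, and your verification of the hypotheses (the sun-dual semigroup as a dual representation, $\tau=\sigma(X^{\sun},X)$ as the representation-compatible topology, and the E.-wap condition via Theorem~\ref{T-abelian-condition} from the Abelian hypothesis) supplies precisely the missing bookkeeping.
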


\begin{cor}
If $\overline{O(x^{*})}^{\sigma(X^{*},X)}$ is norm separable for all $x^{*}\in X_a^{*}$ and $\scST$ is Abelian, then
$$X_a^{*}= X_{uds}^{*}.$$
\end{cor}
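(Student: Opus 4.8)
The plan is to recognize this as the special case $Y=X^*$, $\tau=\sigma(X^*,X)$ of Theorem~\ref{separable-X_uds}, together with an elementary reverse inclusion.

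First I would check the three hypotheses of Theorem~\ref{separable-X_uds} for this choice. The condition $\sigma(X^*,X)\subset\tau$ holds with equality, hence trivially. The topology $\tau=\sigma(X^*,X)=w^*$ is representation compatible since on every set $\overline{ac\bk{T(t)y:t\in S}}^{w^*}$ the identity from the $w^*$-topology to the $\tau$-topology is literally the identity map, hence a homeomorphism. The one substantive hypothesis is that $\bk{S\ni t\mapsto\dual{T(t)y,x}}\in W(S)$ for all $y\in X^*$ and all $x\in X$; this is exactly the content that the Abelianness of $\scST$ provides. Indeed, by Theorem~\ref{T-abelian-condition} commutativity of $\scST$ means that for every $y$ the operators of $\scST$ commute on $y$, and the argument in its proof --- pushing the inner $w^*$-limit through each $T(t_n)$ by Definition~\ref{defi-dual-representation}(4) and then invoking Grothendieck's iterated-limit criterion --- uses no special feature of $x$, so it yields $s\mapsto\dual{T(s)y,x}\in W(S)$ for \emph{every} $x\in X$.

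With the hypotheses in place, the forward inclusion is immediate. Given $x^*\in X_a^*$, the hypothesis of the corollary says that $\overline{O(x^*)}^{\sigma(X^*,X)}$ is norm separable, i.e.\ condition (1) of Theorem~\ref{separable-X_uds} holds; hence condition (2) gives $x^*\in X_{uds}^*$, so $X_a^*\subset X_{uds}^*$. For the reverse inclusion I would argue directly, without using separability: if $y$ is an eigenvector with unimodular eigenvalue, $Ty=\la(T)y$ with $\btr{\la(T)}=1$ for all $T\in\scST$, then evaluating at the minimal idempotent $P\in\scST$ gives $\la(P)^2=\la(P)$ together with $\btr{\la(P)}=1$, forcing $\la(P)=1$; thus $Py=y$ and $y\in PX^*=X_a^*$ by the splitting of Proposition~\ref{first-results}(5). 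Since $X_a^*$ is a closed subspace, $X_{uds}^*\subset X_a^*$, and combining the two inclusions gives $X_a^*=X_{uds}^*$.

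The only delicate point I anticipate is the one flagged above: extracting from commutativity of $\scST$ the membership $s\mapsto\dual{T(s)y,x}\in W(S)$ for \emph{every} $x\in X$, rather than just for a separating family of $x$'s as literally stated in Theorem~\ref{T-abelian-condition}. This forces one to keep track of exactly which iterated $w^*$-limits of $\dual{T(t_n+s_m)y,x}$ are being interchanged and to invoke Definition~\ref{defi-dual-representation}(4) at the right spot; everything else is bookkeeping and appeals to results already established.
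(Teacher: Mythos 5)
Your proposal is correct and is exactly the derivation the paper intends: the corollary is stated without its own proof as an immediate consequence of Theorem~\ref{separable-X_uds} (via Theorem~\ref{separable-theorem}), with the forward inclusion given by (1)$\Rightarrow$(2) and the reverse inclusion $X_{uds}^*\subset X_a^*$ taken for granted in the paper (it asserts $M\subset X_{uds}\subset X_a$ in the proof of Theorem~\ref{Y_uds=Y_a}). Your explicit verifications --- that Abelianness yields $s\mapsto\dual{T(s)y,x}\in W(S)$ for every $x$ via the double-limit criterion, and that $\la(P)=1$ forces $Py=y$ so unimodular eigenvectors lie in $PX^*=X_a^*$ --- just fill in details the paper leaves implicit.
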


\begin{cor}
If $X^{*}$ is norm separable and $\scST$ is Abelian, then
$$
X^{*}_{a}=X^{*}_{uds}.
$$
\end{cor}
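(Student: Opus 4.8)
The plan is to read this off as an immediate specialization of the preceding corollary (norm separability of all orbit closures together with the Abelian hypothesis forces $X_{a}^{*}=X_{uds}^{*}$); the only thing that needs saying is that global norm separability of $X^{*}$ trivially furnishes the orbit-wise separability required there. Concretely, $(X^{*},\nrm{\cdot})$ is a separable metric space and a subspace of a separable metric space is again separable, so every subset of $X^{*}$ is norm separable. In particular, for each $x^{*}\in X_{a}^{*}$ the set $\overline{O(x^{*})}^{\sigma(X^{*},X)}$, which by construction is a $\sigma(X^{*},X)$-closed subset of $X^{*}$, is norm separable. Hence the hypothesis ``$\overline{O(x^{*})}^{\sigma(X^{*},X)}$ is norm separable for all $x^{*}\in X_{a}^{*}$'' of the preceding corollary holds, and since $\scST$ is assumed Abelian, that corollary yields $X_{a}^{*}=X_{uds}^{*}$.

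Alternatively, one can argue directly from Theorem \ref{separable-X_uds} applied with $Y=X^{*}$ and $\tau=\sigma(X^{*},X)$: the inclusion $\sigma(X^{*},X)\subset\tau$ is trivial, and the condition $\{t\mapsto\langle T(t)y,x\rangle\}\in W(S)$ is supplied by the Abelian hypothesis via Theorem \ref{T-abelian-condition}. For $y\in X_{a}^{*}$, condition (1) of Theorem \ref{separable-X_uds}---norm separability of $\overline{O(y)}^{\sigma(X^{*},X)}$---holds because $X^{*}$ is norm separable, whence $y\in X_{uds}^{*}$; this gives $X_{a}^{*}\subset X_{uds}^{*}$. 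The reverse inclusion $X_{uds}^{*}\subset X_{a}^{*}$ belongs to the ambient framework (it is the inclusion already used in the proof of Theorem \ref{Y_uds=Y_a}): a unimodular eigenvector has its orbit contained in the circle through itself, hence a relatively norm-compact orbit, and using that $P\scST P$ acts as a group on it one sees it is reversible, so it lies in $X_{a}^{*}$; passing to the closed span gives the claim.

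There is essentially no obstacle here: all the substance was carried out in Theorem \ref{Y_uds=Y_a}, Theorem \ref{separable-X_uds}, Theorem \ref{separable-theorem}, and the preceding corollary, and the present statement is merely the case in which separability is available globally on $X^{*}$ rather than only orbit by orbit. The single point that deserves a sentence in the written proof is the elementary one that a weak$^{*}$-closed subset of $X^{*}$ is norm separable whenever $X^{*}$ is, i.e.\ that subspaces of separable metric spaces are separable; everything else is a citation of the preceding corollary.
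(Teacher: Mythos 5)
Your proposal is correct and matches the paper's (implicit) intent: the paper states this corollary without proof as an immediate consequence of the preceding one, and your observation that norm separability of $X^{*}$ passes to every subset, hence to each orbit closure $\overline{O(x^{*})}^{\sigma(X^{*},X)}$, is exactly the missing step. The alternative direct route through Theorem \ref{separable-X_uds} and the remark on the inclusion $X_{uds}^{*}\subset X_{a}^{*}$ are likewise consistent with the framework already established in the paper.
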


\section{Ideal Theory}
In this section we recall the main results on right semitopological semigroups, which are used to obtain the splittings and the group (sub)structures. As mentioned before, in this section $S$ is only right(left)-semitopological and not necessarily Abelian.
\begin{defi}
A subset $\scJ$ of a semigroup $\scS$ is called a right (left) ideal of $\scS$ if
$$\scJ\scS:=\bk{xs:x\in\scJ,s\in \scS}\subset \scJ \ \  (\scS\scJ:=\bk{sx:x\in\scJ,s\in \scS}\subset \scJ).
$$
A subset is called an ideal if it is both a right and a left ideal.
\end{defi}

\begin{theo}[\cite{Ellis_cont}]
Every compact right [left] topological semigroup has an idempotent.
\end{theo}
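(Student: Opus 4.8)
The plan is to carry out Ellis's classical compactness argument. Throughout, $\scS$ is the given semigroup, which by the conventions above is compact Hausdorff, and I treat the case in which every right translation $x\mapsto xs$ (for $s\in\scS$ fixed) is continuous; the bracketed variant follows by passing to the opposite semigroup $\scS^{\mathrm{op}}$, so it suffices to produce one idempotent in $\scS$. The argument has two parts: first extract a minimal nonempty closed subsemigroup, then show that every element of such a subsemigroup is idempotent.

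\emph{Step 1: a minimal closed subsemigroup.} Let $\mathcal{M}$ be the family of all nonempty closed subsemigroups of $\scS$, ordered by reverse inclusion; it is nonempty since $\scS\in\mathcal{M}$. For a chain $\mathcal{C}\subset\mathcal{M}$, the intersection $\bigcap\mathcal{C}$ is again a closed subsemigroup, and it is nonempty: $\mathcal{C}$ is totally ordered by inclusion, hence has the finite intersection property, and $\scS$ is compact. So $\bigcap\mathcal{C}\in\mathcal{M}$ is an upper bound for $\mathcal{C}$, and Zorn's Lemma supplies a minimal element $T\in\mathcal{M}$.

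\emph{Step 2: the idempotent.} Fix $s\in T$. Being closed in the compact space $\scS$, the set $T$ is compact, so $Ts=\{\,ts:t\in T\,\}$, the image of $T$ under the continuous map $x\mapsto xs$, is compact and therefore closed in the Hausdorff space $\scS$. It is nonempty, and since $s\in T$ and $T$ is a semigroup we have $sT\subset T$, whence $(Ts)(Ts)=T(sT)s\subset (TT)s\subset Ts$, so $Ts$ is a subsemigroup. Thus $Ts\in\mathcal{M}$ with $Ts\subset T$, and minimality of $T$ gives $Ts=T$; in particular there is $e\in T$ with $es=s$. Now put $R=\{\,t\in T:ts=s\,\}$, the intersection of $T$ with the preimage of the closed singleton $\{s\}$ under $x\mapsto xs$; it is closed, it is nonempty since $e\in R$, and it is a subsemigroup because $t_1,t_2\in R$ gives $(t_1t_2)s=t_1(t_2s)=t_1s=s$. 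Hence $R\in\mathcal{M}$ with $R\subset T$, and minimality forces $R=T$. Since $s\in T=R$ we obtain $ss=s$, so $s$ is an idempotent of $\scS$.

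\emph{Where the difficulty sits.} The argument requires nothing beyond associativity, compactness, and one-sided continuity, and the only two steps that genuinely use hypotheses are: (i) in Step 1, that a nested family of nonempty closed subsemigroups has nonempty intersection --- exactly the finite intersection property for the compact $\scS$, which is what makes the Zorn argument run; and (ii) in Step 2, that $Ts$ and the stabilizer $R$ are closed --- this, and nothing else, is where continuity of the right translations enters. No joint continuity, commutativity, or unit is needed, which is precisely why the result applies to the $w^*$-compactification $\scST$ and to its closed convex and absolutely convex hulls $\overline{\mbox{co}}^{w^*}\scST$ and $\overline{\mbox{ac}}^{w^*}\scST$, all of which are semigroups with continuous right translations.
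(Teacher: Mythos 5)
Your proof is correct and complete; note, however, that the paper offers no proof of this statement at all --- it is quoted from \cite{Ellis_cont} as a known result --- so there is no internal argument to compare against. What you have written is the standard Ellis--Numakura argument (Zorn's lemma on the family of nonempty closed subsemigroups ordered by reverse inclusion, then $Ts=T$ by minimality and the closed stabilizer $\{t\in T: ts=s\}$), and both of the points you isolate as load-bearing are the right ones: the finite intersection property of the compact space for the Zorn step, and one-sided continuity of translations for the closedness of $Ts$ and of the stabilizer. One small point of bookkeeping: the paper's definition, following Ruppert, calls the multiplication \emph{right} continuous when $y\mapsto x_0y$ is continuous for each fixed $x_0$, i.e.\ when the \emph{left} translations are continuous, whereas your argument is written for continuous right translations $x\mapsto xs$. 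Your reduction to the opposite semigroup does cover this, but a reader matching your proof against the paper's Definition should simply run the mirror-image argument (use $sT=T$ and the stabilizer $\{t\in T: st=s\}$) instead of invoking $\scS^{\mathrm{op}}$. Your closing remark also correctly identifies why the result applies to $\scST$, $\scTO$ and $\scATO$, which is exactly how the paper uses it via Theorem \ref{right_topo_semigroups} in the proof of Proposition \ref{first-results}.
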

\begin{defi} [ {\cite[p. 12]{RuppertLNM}}]
The set of idempotents in a semigroup $S$ is denoted $E(S).$ We define relations $\le_L$ and $\le_R$ on E(S) by
\begin{eqnarray*}
e\le_L f &\mbox{ if } & ef=e, \\
e\le_R f &\mbox{ if } & fe=e.
\end{eqnarray*}
If $e$ and $f$ commute, then we omit the indices L and R.
\end{defi}
\begin{defi}
Let $(A,\le)$ be a set with a transitive relation.
Then, an element $a$ is called $\le-$maximal [$-$minimal] in $A$
if for every $a^{\prime}\in A$, $a\le a^{\prime}$ implies $a^{\prime} \le a$ [$a^{\prime} \le a$ implies $a\le a^{\prime}$].
\end{defi}
Recalling \cite[p. 14]{RuppertLNM}, we have the following:
\begin{theo}
Every compact right topological semigroup contains $\le_L-\mbox{maximal}$ and $\le_R-\mbox{minimal}$ idempotents.
\end{theo}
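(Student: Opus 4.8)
The plan is to obtain both idempotents from Zorn's lemma, applied to the preorder $\le_L$ on $E(S)$ for the maximal one and to $\le_R$ on $E(S)$ for the minimal one; the only substantive point will be that every $\le_L$-chain has an upper bound in $E(S)$ and every $\le_R$-chain has a lower bound in $E(S)$. For this I would use that, $S$ being right topological, the multiplication is continuous in the right variable, so that for each fixed $e\in S$ the map $s\mapsto es$ is continuous, together with the Ellis theorem \cite{Ellis_cont} quoted above.

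For a $\le_L$-chain $C\subseteq E(S)$ I would look at
$$G:=\bigcap_{e\in C}\{\,s\in S:es=e\,\}=\{\,s\in S:es=e\ \mbox{ for all }\ e\in C\,\}.$$
Each $\{s:es=e\}$ is the preimage of the point $e$ under the continuous map $s\mapsto es$, hence closed, so $G$ is closed and therefore compact. It is nonempty: a finite subset of $C$ can be listed as $e_1\le_L\cdots\le_L e_n$, and then $e_ie_n=e_i$ for every $i$, so $e_n$ lies in each of $\{s:e_1s=e_1\},\dots,\{s:e_ns=e_n\}$; thus these closed sets have the finite intersection property and $G\ne\emptyset$. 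Moreover $G$ is a subsemigroup, since $es=e$ and $et=e$ for all $e\in C$ force $e(st)=(es)t=et=e$. Hence $G$ is a compact right topological semigroup, so by the Ellis theorem it contains an idempotent $g$; as $eg=e$, i.e. $e\le_L g$, for every $e\in C$, this $g$ is an upper bound of $C$, and Zorn's lemma then yields a $\le_L$-maximal idempotent.

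For the $\le_R$-minimal case I would run the mirror argument, but using fixed-point sets rather than preimages of points: for a $\le_R$-chain $C$ consider $F:=\bigcap_{e\in C}\{\,s\in S:es=s\,\}$, the intersection of the fixed-point sets of the continuous maps $s\mapsto es$. This is closed, it is a subsemigroup ($es=s$ and $et=t$ give $e(st)=(es)t=st$), and it is nonempty because, listing a finite subchain as $e_1\le_R\cdots\le_R e_n$, one has $e_je_1=e_1$ for all $j$, so $e_1$ already lies in that finite intersection. An idempotent $g\in F$ provided by the Ellis theorem satisfies $eg=g$, i.e. $g\le_R e$, for all $e\in C$, so it is a lower bound of $C$, and Zorn's lemma gives a $\le_R$-minimal idempotent.

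The delicate point is not any single hard estimate but the correct choice of the auxiliary compact subsemigroup together with the proof that it is nonempty: one has to see that ``$\le_L$-dominating every member of $C$'' is carved out by an intersection of \emph{closed} sets -- which is exactly where the right-topological hypothesis (continuity of $s\mapsto es$) is used, and where the naive mirror attempt with the other translations $s\mapsto se$ would break down -- and that the largest element of each finite subchain witnesses the finite intersection property. Once this is set up, the appeals to the Ellis theorem inside $G$ (respectively $F$) and to Zorn's lemma are routine, and no unit in $S$, nor any joint continuity or commutativity, is required.
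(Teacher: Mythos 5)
Your proof is correct, and the one genuinely delicate point -- that both the upper-bound set $\{s: es=e \ \forall e\in C\}$ and the lower-bound set $\{s: es=s\ \forall e\in C\}$ must be carved out using only the continuous left multiplications $s\mapsto es$, never $s\mapsto se$ -- is handled exactly right, as are the finite-intersection-property and closed-subsemigroup checks. The paper itself gives no proof here (the theorem is simply recalled from Ruppert's monograph), and your Zorn--Ellis argument is essentially the standard one found there, so there is nothing further to reconcile.
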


\begin{theo}[{\cite[p. 21]{RuppertLNM}}] \label{right_topo_semigroups}
For an idempotent $e$ in a compact right topological semigroup $S$, the following statements are equivalent:
\begin{enumerate}
\item $e$ is $\le_R-$minimal in $E(S)$.
\item $e$ is $\le_L-$minimal in $E(S)$.
\item $eS$ is a minimal right ideal of $S$.
\item $eSe$ is a group, and $e$ is an identity in $eSe$.
\item $Se$ is a minimal left ideal of $S$.
\item $SeS$ is the minimal ideal of $S$.
\item S has a minimal ideal $M(S)$ and $e\in M(S).$
\end{enumerate}
\end{theo}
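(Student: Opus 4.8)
This statement is the Ellis--Namioka structure theorem for compact right topological semigroups, and I would prove it along classical lines: a cycle of implications driven by exactly two ingredients --- the idempotent theorem for compact right topological semigroups recorded above (used not only for $S$ but for its closed subsemigroups), and the continuity of each right translation $\rho_a\colon x\mapsto xa$, which makes $\rho_a(X)=Xa$ compact whenever $X\subset S$ is compact. (I take ``right topological'' to mean that right translations are continuous; the mirror convention merely interchanges the words ``left'' and ``right'' everywhere below.)

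The plan opens with one-sided ideal theory. The nonempty closed left ideals of $S$ form a nonempty family stable under intersections of downward-directed subfamilies (by compactness), so Zorn's lemma yields a minimal one; since every left ideal contains a closed left ideal $Sa$, this is a genuine minimal left ideal and every left ideal contains one. Fix a minimal left ideal $L$: for $x\in L$ one checks $Sx=L$ and $Lx=L$, so $\rho_x$ maps $L$ onto $L$ and the fibre $\bk{y\in L:yx=x}=L\cap\rho_x^{-1}(x)$ is a nonempty closed subsemigroup of the compact right topological semigroup $L$; the idempotent theorem applied inside $L$ then produces an idempotent. The core lemma at this stage --- proved by the usual cancellation manipulations, again invoking the idempotent theorem on closed subsemigroups of $L$ to supply inverses --- is that every idempotent $e\in L$ is a right identity for $L$ (so $L=Se$) and that $eL=eSe$ is a group with identity $e$. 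Finally $Ls$ is again a minimal left ideal for each $s\in S$, so the union $M(S)$ of all minimal left ideals is a two-sided ideal contained in every ideal, equal to $SeS$ for any idempotent $e$ lying in a minimal left ideal; being simple and possessing a minimal left ideal, $M(S)$ is completely simple, and its Rees decomposition --- a disjoint union of minimal left ideals $Se$, a disjoint union of minimal right ideals $eS$, with $Se\cap eS=eSe$ a group --- delivers all the right-sided statements with no further topology.

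Then the equivalences fall out. $(5)\Rightarrow(4)$ is the core lemma with $L=Se$. $(4)\Rightarrow(1)$: if $g$ is an idempotent with $eg=g$, then $ge=ege$ is an idempotent of the group $eSe$, hence $ge=e$, i.e.\ $e\le_R g$; $(4)\Rightarrow(2)$ is symmetric, starting from $ge=g$. $(2)\Rightarrow(5)$: were $Se$ not a minimal left ideal it would properly contain a minimal left ideal $L'$ carrying an idempotent $f$; since $f\in L'\subseteq Se$ we may write $f=se$, so $fe=f$, while $ef=e$ would give $e\in Sf=L'$, hence $Se\subseteq L'$, contradicting $L'\subsetneq Se$ --- so $ef\ne e$, contradicting $\le_L$-minimality of $e$. $(1)\Rightarrow(3)$ is the left--right mirror of $(2)\Rightarrow(5)$ and $(3)\Rightarrow(4)$ the mirror of $(5)\Rightarrow(4)$, so $(1)$--$(5)$ are equivalent. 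Lastly $(5)\Leftrightarrow(6)\Leftrightarrow(7)$ just reads off the description of $M(S)$: $Se$ is a minimal left ideal $\iff$ $e\in M(S)$ $\iff$ $SeS=M(S)$ is the minimal ideal.

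The step I expect to be most troublesome is coping with the left--right asymmetry built into the hypothesis: only the right translations are continuous, so minimal left ideals --- and the kernel $M(S)$ --- are closed and are themselves compact right topological semigroups, while minimal right ideals $eS$ need not be closed. My plan sidesteps this by taking left ideals and $M(S)$ as the primary objects and extracting every right-sided claim from the purely algebraic Rees structure of the completely simple semigroup $M(S)$. The second delicate point is the repeated appeal to the idempotent theorem inside closed subsemigroups --- $L$, $eSe$, and fibres $\rho_x^{-1}(x)$ --- in order to manufacture the inverses that promote $eL$ and $eSe$ to groups; in each instance one must first check, routinely, that the set concerned really is a closed subsemigroup.
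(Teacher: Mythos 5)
The paper does not actually prove this statement: it is imported verbatim from Ruppert's monograph (the citation to p.~21) and used as a black box to extract the minimal idempotent in Proposition \ref{first-results}. So there is no in-paper argument to compare against; what you have written is the standard Ellis--Ruppert structure theory, and its architecture is sound: $Sa$ is compact because right translations are continuous, Zorn applied to closed left ideals gives minimal left ideals, the fibre $\bk{y\in L:yx=x}$ is a closed subsemigroup so the idempotent theorem puts an idempotent into every minimal left ideal, the core lemma gives $L=Se$ with $eSe$ a group, and the seven conditions close up into a cycle together with the kernel $M(S)=SeS$.

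Two points in your sketch need tightening. First, do not obtain the inverses in $eSe$ by ``invoking the idempotent theorem to supply inverses'': the natural candidate subsemigroups for that (closures of cyclic semigroups $\bk{a^n:n\in\za}$) need not be subsemigroups when only one-sided continuity is available, so that route can genuinely fail. The correct mechanism is purely algebraic minimality: for $a\in eSe\subset L$ one has $Sa=L\ni e$, hence $e=ta$ for some $t$ and $(ete)a=e$, a left inverse inside the monoid $eSe$. Second, $(1)\Rightarrow(3)$ and $(3)\Rightarrow(4)$ cannot literally be ``mirrors'' of $(2)\Rightarrow(5)$ and $(5)\Rightarrow(4)$, because the topological inputs (closedness of $Sa$, existence of minimal left ideals and of idempotents in them) have no right-sided counterparts --- $eS$ need not even be closed. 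You flag this yourself and propose to route the right-sided claims through the Rees structure of $M(S)$; to make that honest you need the one additional algebraic fact that if $eSe$ is a group then $eS$ is a minimal right ideal (for $a\in eS$, $ae=eae\in eSe$ has an inverse $b$, so $a(eb)=e$ and $eS\subset aS$), after which $M(S)$ is completely simple and every right ideal meets it in some minimal right ideal $fS$ with $f$ idempotent, so the mirrored argument goes through. With these two repairs the cycle is complete. A final remark: the paper's Definition 3.2, taken from Ruppert, makes ``right'' refer to continuity in the right variable, i.e.\ continuity of the \emph{left} translations, which is opposite to your convention; since the list of conditions is left-right symmetric this only relabels which half of your argument is topological and which half is algebraic, but it is worth stating explicitly when the theorem is applied, as in Proposition \ref{first-results}, to a concrete compactification.
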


\section{Several Lemmas}

\begin{pro} \label{interchanged1}
Let $S$ be Abelian, $f:S\to\ce$ be E.-wap and $\net{t}{\la}{\Lambda},
\net{s}{\gamma}{\Gamma}\subset S.$
Then, we may pass to subnets $\bk{s_{\gamma_{\alpha}}}_{\alpha\in A}$ and $\bk{t_{\la_{\beta}}}_{\beta\in b}$ such that the iterated limits
\begin{eqnarray*}
\nu&=& \netlim{\alpha}{A}\netlim{\beta}{B} f(t_{\la_{\beta}}+s_{\gamma_{\al}}) \mbox{ and } 
\mu= \netlim{\beta}{B}\netlim{\al}{A}f (t_{\la_{\beta}}+s_{\gamma_{\al}})
\end{eqnarray*}
exist, and we have $\nu=\mu.$
\end{pro}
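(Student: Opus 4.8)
The plan is to exploit the classical double-limit (Grothendieck) characterization of Eberlein-weak almost periodicity: a bounded function $f:S\to\ce$ lies in $W(S)$ precisely when, for every pair of sequences (equivalently, every pair of nets after passing to subnets) $\net{t}{\la}{\Lambda},\net{s}{\gamma}{\Gamma}\subset S$ along which both iterated limits of $f(t_{\la}+s_{\gamma})$ exist, those iterated limits agree. So the content to be extracted here is really just that the iterated limits \emph{can be made to exist} by passing to subnets, and that the hypothesis ``$f$ E.-wap'' then forces their equality.

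First I would realize $f$ inside a weakly compact set: since $f\in W(S)$, the orbit $O(f)=\bk{f(\cdot+t):t\in S}$ is relatively weakly compact in $C_b(S)$, hence (Eberlein--\v Smulian) relatively weakly sequentially compact, and its closed absolutely convex hull $K$ is weakly compact. Passing to a subnet $\bk{t_{\la_{\beta}}}_{\beta\in B}$, the net $f(\cdot+t_{\la_{\beta}})$ converges weakly in $C_b(S)$ to some $g\in K$; in particular the scalars $f(t_{\la_{\beta}}+s)$ converge for each fixed $s\in S$, namely to $g(s)$. Next, viewing $\bk{g(s_{\gamma})}_{\gamma}$ and the doubly-indexed bounded family $\bk{f(t_{\la_{\beta}}+s_{\gamma})}$ as points of a product of compact discs, pass to a further subnet $\bk{s_{\gamma_{\al}}}_{\al\in A}$ along which $g(s_{\gamma_{\al}})\to\mu$ and along which, for each fixed $\beta$, $f(t_{\la_{\beta}}+s_{\gamma_{\al}})\to:h(\beta)$; then a final diagonal/subnet extraction on the $\beta$-index makes $h(\beta)$ converge to some $\nu$. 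After these extractions both iterated limits $\nu=\netlim{\al}{A}\netlim{\beta}{B}f(t_{\la_{\beta}}+s_{\gamma_{\al}})$ and $\mu=\netlim{\beta}{B}\netlim{\al}{A}f(t_{\la_{\beta}}+s_{\gamma_{\al}})$ exist by construction.

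It remains to see $\nu=\mu$. One clean route: once both iterated limits exist, Grothendieck's double-limit theorem applied to the relatively weakly compact set $O(f)\subset C_b(S)$ (with the evaluation functionals $\delta_s$, which have norm $\le 1$, playing the role of the second sequence) gives exactly the equality of the two iterated limits — this is precisely the criterion quoted as \cite{Groth} in the proof of Theorem \ref{T-abelian-condition}. Alternatively, and more self-containedly, identify $\mu=\netlim{\beta}{B}(\netlim{\al}{A}f(t_{\la_{\beta}}+s_{\gamma_{\al}}))$: since $s\mapsto f(t_{\la_{\beta}}+s)$ is a fixed element of $C_b(S)$ and $\delta_{s_{\gamma_{\al}}}\rhp$ some functional $\Lambda$ in $(C_b(S)^*, w^*)$ along a subnet (Banach--Alaoglu), and by weak convergence $f(\cdot+t_{\la_{\beta}})\rhp g$, the two orders of limiting correspond to $\langle g,\Lambda\rangle$ computed as $\lim_\beta \langle f(\cdot+t_{\la_\beta}),\Lambda\rangle$ versus $\lim_\al\langle g, \delta_{s_{\gamma_\al}}\rangle$; weak compactness of $K$ makes the bilinear pairing jointly continuous on $K\times(\text{bounded }w^*\text{-set})$, so both equal $\langle g,\Lambda\rangle$, whence $\nu=\mu$.

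The main obstacle is purely bookkeeping: nets are not sequences, so the ``diagonal'' subnet extractions must be done carefully (iterated subnets of subnets, using that a subnet of a convergent net still converges to the same limit, and that one may extract a subnet making finitely-many-at-a-time — here, one fixed index at a time — additional coordinates converge, via Tychonov on the relevant product of discs). No single step is deep; the care lies in choosing the order of extraction so that fixing $\beta$ first and letting $\al$ run, versus the reverse, both land inside sets where we already have convergence, and in invoking Grothendieck's criterion (or joint continuity of the pairing on weakly compact $\times$ bounded) to close the equality.
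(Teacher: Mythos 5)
Your proposal is correct and, in its second (``more self-contained'') route, is exactly the paper's proof: the paper passes to subnets along which $f_{t_{\la}}\rightharpoonup g$ weakly in $C_b(S)$ (possible since $f$ is E.-wap) and $\delta_{s_{\gamma}}\rightharpoonup^*\Lambda$ in $C_b(S)^*$ (Banach--Alaoglu), and then reads off both iterated limits of $f(t_{\la}+s_{\gamma})=\langle f_{t_{\la}},\delta_{s_{\gamma}}\rangle$ as $\langle g,\Lambda\rangle$; your preliminary product-of-discs extraction and the appeal to Grothendieck's criterion are unnecessary detours once this is done. One small inaccuracy: the duality pairing is \emph{not} jointly continuous on (weakly compact)$\times$(bounded $w^*$-set) in general, but you do not need that --- the computation of each iterated limit uses only separate continuity in the appropriate order (inner limit against a fixed functional, respectively a fixed function), which is precisely what your displayed identification of both limits with $\langle g,\Lambda\rangle$ actually uses.
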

\begin{proof}
Because $f$ is E.-wap, $\bk{f_{t_{\la}}}_{\la\in\Lambda}$ is rel.~$\sigma(C_b(S),C_b(S)^*)-$compact,
$\bk{\delta_{s_{\gamma}}}_{\gamma\in\Gamma}$ is relatively $w^*$-compact, and we may pass to convergent subnets. Using $f(t_{\la}+s_{\gamma})=\delta_{s_{\gamma}}f_{t_{\la}}$, we find that the iterated limits exist and that they are equal.
\end{proof}

Next, we recall the consequence of \cite[Cor. 2 (a), p. 127]{Jarchow}, and we have the following:
\begin{pro} \label{tau-separated}
Let $E\subset F\subset X^*$ and $\tau$ be a locally convex topology on $X^*$, where $\sigma(X^*,X)\subset \tau \subset \nrm{\cdot}.$
If no vector of $E$ can be separated from $F$ by a $\tau-$continuous functional, then
$\overline{E}^{\tau}=\overline{F}^{\tau}.$
\end{pro}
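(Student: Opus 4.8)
The plan is to read Proposition \ref{tau-separated} as what it is, a packaging of the Hahn--Banach separation theorem in the locally convex space $(X^{*},\tau)$, and to invoke \cite[Cor.~2(a), p.~127]{Jarchow} directly. First I would record the setting: $\tau$ is locally convex by hypothesis, and since $X$ separates the points of $X^{*}$ and $\sigma(X^{*},X)\subseteq\tau$, the topology $\tau$ is also Hausdorff, so $(X^{*},\tau)$ is a genuine Hausdorff locally convex space; the bound $\tau\subseteq\nrm{\cdot}$ is not needed here. In such a space, for a $\tau$-closed convex set $C\subseteq X^{*}$ and a point outside $C$ there is a $\tau$-continuous linear functional strictly separating them, which is exactly the cited corollary.

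Since $E\subseteq F$, the inclusion $\overline{E}^{\tau}\subseteq\overline{F}^{\tau}$ is free, so everything reduces to $\overline{F}^{\tau}\subseteq\overline{E}^{\tau}$; because $\overline{E}^{\tau}$ is $\tau$-closed this follows once I show $F\subseteq\overline{E}^{\tau}$. I would argue by contradiction. The set $\overline{E}^{\tau}$ is $\tau$-closed and convex (convexity of $\overline{E}^{\tau}$ coming from that of $E$ --- and in every application of this lemma in the sequel, e.g.\ in the proof of Theorem \ref{Y_uds=Y_a} where one takes $E=M$ the span of the generalized Fourier components and $F=Y_{a}$, the sets are linear subspaces, so there is nothing to check; in the general statement one reads $E,F$ as convex sets or replaces $E$ by its $\tau$-closed convex hull). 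If some $x^{*}$ in the larger set failed to lie in $\overline{E}^{\tau}$, then by the separation statement above there would be a $\tau$-continuous $\phi$ with $\Rep\phi(x^{*})>\sup_{e\in E}\Rep\phi(e)$, i.e.\ a $\tau$-continuous functional separating a vector of the larger set from the smaller one --- contradicting the hypothesis (which is only informative for the larger of the two nested sets, so this is how I read ``no vector \ldots\ can be separated \ldots''). Hence $F\subseteq\overline{E}^{\tau}$, and therefore $\overline{E}^{\tau}=\overline{F}^{\tau}$.

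I do not expect a genuine obstacle: the proof is one invocation of separation. The only points to get right are bookkeeping --- fixing the precise meaning of ``separated from'' as the point/$\tau$-closed-convex-set strict separation of \cite{Jarchow}, and noting that the (implicit) convexity costs nothing because every use is for subspaces. Indeed, in the subspace case the hypothesis is simply that every $\tau$-continuous functional annihilating $E$ annihilates $F$, which by the bipolar theorem is literally the assertion $F\subseteq\overline{E}^{\tau}$, so the lemma is a one-line consequence of duality theory.
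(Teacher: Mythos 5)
Your proof is correct and takes essentially the same route as the paper: both arguments reduce the claim to one application of Hahn--Banach separation in $(X^*,\tau)$ via \cite[Cor.~2(a), p.~127]{Jarchow}, deriving a contradiction with the non-separation hypothesis. Your version is in fact slightly cleaner --- by separating a point of $F$ itself from $\overline{E}^{\tau}$ you avoid the paper's extra step of approximating a point of $\overline{F}^{\tau}\setminus\overline{E}^{\tau}$ by a net from $F$ to produce an element with $\phi(x_{\lambda_0})>1/2$ --- and your remarks on the implicit convexity (subspace) assumption on $E$ and on reading the hypothesis as applying to vectors of the larger set $F$ correctly pin down the two points the paper's statement leaves tacit.
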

\begin{proof}
If $\overline{E}^{\tau}\not =\overline{F}^{\tau},$ then there exists an $x\in \overline{F}^{\tau} \backslash \overline{E}^{\tau}$ and a $\tau-$continuous functional $\phi$ such that $\phi_{|\overline{E}^{\tau}}=0$ and $\phi(x)=1.$ By definition, we have for net $\net{x}{\la}{\Lambda}\subset F$ the $\tau$-convergence $x_{\la}\to x.$ Moreover, we find a subnet that has no intersection with $\overline{E}^{\tau}.$ The continuity $\phi$ leads to an element $x_{\la_0}$ with $\phi(x_{\la_0})>1/2,$ which illustrates the contradiction.
\end{proof}

We start with the main lemma, which is applied in several ongoing circumstances.

\begin{lem} \label{general-metric-in-X-star}
Let $K\subset X^*$ be a $w^*-$compact and norm-separable set. Then, $(K,w^*)$ is compact metrizable, and the metric is given by
$$
d(x,y)=\sum_{n=1}^{\infty}\frac{\btr{<z_n,x-y>}}{1+ \btr{<z_n,x-y>}}
$$
for a sequence $\seq{z}{n}\subset X.$
\end{lem}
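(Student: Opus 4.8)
The plan is to exhibit a sequence $\seq{z}{n}\subset X$ whose induced pseudometric $d$ metrizes the weak-star topology on $K$, and then invoke compactness to upgrade this to a genuine metric and to conclude compact metrizability. First I would use the norm-separability of $K$: pick a norm-dense sequence $\seq{x^*}{n}$ in $K$. For each pair $m\neq k$ with $x_m^*\neq x_k^*$ choose, by the Hahn--Banach theorem, a vector $z_{m,k}\in X$ with $\nrm{z_{m,k}}\le 1$ and $\langle z_{m,k},x_m^*-x_k^*\rangle\neq 0$; re-enumerate the countable collection $\{z_{m,k}\}$ as $\seq{z}{n}$. This countable set of test vectors separates the points of the dense sequence, hence (by a routine $\varepsilon/3$ argument using that the elements of $K$ are uniformly norm-bounded, being $w^*$-compact) separates the points of all of $K$. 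Therefore the formula
$$
d(x,y)=\sum_{n=1}^{\infty}\frac{\btr{\langle z_n,x-y\rangle}}{1+\btr{\langle z_n,x-y\rangle}}
$$
defines a genuine metric on $K$ (the series converges because each summand is bounded by $1$ — here one may instead insert a $2^{-n}$ weight, but with $\nrm{z_n}\to 0$ chosen small the bare sum converges as well; I would simply note convergence is clear since the terms are $\le \min(1,\nrm{z_n}\operatorname{diam}_{\nrm{\cdot}}K)$).

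Next I would check that the identity map $(K,w^*)\to (K,d)$ is continuous: if $x_\alpha\overset{w^*}{\to}x$ in $K$ then $\langle z_n,x_\alpha-x\rangle\to 0$ for each fixed $n$, and since the $n$-th summand is dominated by $1$ uniformly, dominated convergence for series gives $d(x_\alpha,x)\to 0$. Now the key topological step: $(K,w^*)$ is compact (by hypothesis) and $(K,d)$ is Hausdorff (since $d$ is a genuine metric, using point-separation), so a continuous bijection from a compact space to a Hausdorff space is a homeomorphism. Hence the $w^*$-topology on $K$ coincides with the $d$-topology, which shows $(K,w^*)$ is compact metrizable with the displayed metric.

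The main obstacle — really the only non-formal point — is ensuring that the countably many functionals $z_n$ separate \emph{all} points of $K$, not merely the points of the chosen dense sequence. This is where norm-separability is essential and where the uniform norm bound on $K$ (a free consequence of $w^*$-compactness via the uniform boundedness principle, or of $K$ lying in a ball) enters: given distinct $x,y\in K$, approximate them in norm by sequence elements $x_m^*,x_k^*$ close enough that some $z_{m,k}$ still detects $x-y$, using $|\langle z_{m,k},x-y\rangle|\ge |\langle z_{m,k},x_m^*-x_k^*\rangle|-\nrm{z_{m,k}}(\nrm{x-x_m^*}+\nrm{y-x_k^*})$. Everything else is the standard compact-continuous-bijection-into-Hausdorff argument, so I would keep that part brief.
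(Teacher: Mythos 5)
Your overall route is sound and more elementary than the paper's, but the separation step as written has a genuine gap. You choose $z_{m,k}\in B_X$ with merely $\langle z_{m,k},x_m^*-x_k^*\rangle\neq 0$ and then, for distinct $x,y\in K$, invoke
$$
|\langle z_{m,k},x-y\rangle|\ \ge\ |\langle z_{m,k},x_m^*-x_k^*\rangle|-\|z_{m,k}\|\bigl(\|x-x_m^*\|+\|y-x_k^*\|\bigr).
$$
This lower bound is useless without quantitative control on the first term: "nonzero" allows $|\langle z_{m,k},x_m^*-x_k^*\rangle|$ to be, say, $10^{-100}\|x_m^*-x_k^*\|$, and then no choice of "close enough" approximants makes the right-hand side positive --- note the circularity that the required closeness depends on $z_{m,k}$, which is already fixed by the time you pick $m$ and $k$. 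The fix is one line: choose $z_{m,k}\in B_X$ nearly norming for the difference, i.e.\ $|\langle z_{m,k},x_m^*-x_k^*\rangle|\ge\tfrac12\|x_m^*-x_k^*\|$ (this is just the definition of the dual norm, evaluated on $X$; no Hahn--Banach is needed). Then with $\delta=\|x-y\|$ and approximants within $\delta/8$, one gets $\|x_m^*-x_k^*\|\ge 3\delta/4$ and hence $|\langle z_{m,k},x-y\rangle|\ge 3\delta/8-\delta/4>0$, so the countable family does separate all of $K$. The rest of your argument --- the $2^{-n}$ weighting or rescaling of the $z_n$ to force convergence of the series (a point the stated formula in the lemma itself glosses over), the $w^*$-to-$d$ continuity via dominated convergence, and the continuous-bijection-from-compact-to-Hausdorff step --- is correct.

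Once repaired, your construction is genuinely different from the paper's. The paper forms the separable subspace $Z=\overline{\mathrm{span}}\{y_n^*\}\subset X^*$ generated by a norm-dense sequence in $K$, takes a $w^*$-dense sequence in the metrizable ball $B_{Z^*}$, extends those functionals to $X^{**}$ by Hahn--Banach, and then pulls them back into $B_X$ using Goldstine's theorem, obtaining the countable separating family in $X$ that way. Your direct choice of near-norming vectors for differences of a dense sequence avoids the bidual and Goldstine entirely and is the cleaner of the two; the only thing the paper's detour buys is a family separating all of $Z$ rather than just $K$, which the lemma does not need.
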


\begin{proof}
By the assumption, there is a sequence $\bk{y^{*}_n}_{n\in\za}$ such that
$$
K\subset \overline{\bk{y^{*}_n}_{n\in\za}}^{\nrm{\cdot}}.
$$
If $Z=\overline{span}\bk{y^{*}_n}=\overline{\bk{x^{*}_n}_{n\in\za}}\subset X^{*},$
then $B_{Z^*}$ is compact metrizable,
therefore separable, where
$K\subset \overline{Z}^{w^*}$.

Hence, let
$\overline{\bk{z^*_n}_{n\in\za}}^{w^*}$ be $w^*-$dense in $B_{Z^*}.$
Let $x_n^{**}\in X^{**}$ be the sequence of extensions of $z_n^*:Z\to \ce.$ Note that for the natural embedding
$j_X:X\to X^{**},$  we have $\overline{j_XB_X}^{\sigma(X^{**},X^{*})}=B_{X^{**}}$; compare \cite[p. 424]{DS}. We define the sequence of open sets,
$$
U_{k,l}:=\bk{x^{**}\in X^{**}: \btr{x^{**}(x_m^{*})}<\frac{1}{k}, 1\le m \le l}.
$$
Then, $U_{k,l}$ is $w^*-$open zero-neighborhood. Consequently, for all $k,l\in \za,$ we find $x_n^{k,l}\in B_X$
such that $j_Xx^{k,l}_n-x_n^{**}\in U_{k,l}.$ Letting $H=\overline{span}\bk{x_n^{k,l}:k,l,n\in\za},$ we claim that
$(Z,\sigma(Z,H))$ is Hausdorff.
If $z\in Z$ and $z(x_m^{k,l})=0 $ for all
$m,k,l\in \za$, by definition,
for all $\ep >0$, we find an $x_n^{*}$ such that $\nrm{x_n^{*}-z} \le \ep.$

\begin{eqnarray*}
\btr{z_m^*(x_n^{\sun})}&=&\btr{x^{\sun*}_m(x^{\sun}_n)} \le \btr{(x^{\sun*}_m-x_m^{k,l})(x^{\sun}_n)}+\btr{x_m^{k,l}(x^{\sun}_n)} \\
&\le& \frac{1}{k} +\btr{x_m^{k,l}(x^{\sun}_n)},
\end{eqnarray*}
for all $0\le l\le n.$ Furthermore,
\begin{eqnarray*}
\btr{x_n^{k,l}(x^{\sun}_m)}&\le& \btr{x_m^{k,l}(x^{\sun}_n-z)}+0 \\
&\le&\nrm{x_n-z} \le \ep.
\end{eqnarray*}
Hence, $x^{\sun}_n=0,$; therefore, $z=0.$ With $\bk{\btr{<x_n^{k,l},\cdot>}:k,l,n\in\za}$, we found a countable set of seminorms, which induce the Hausdorff metric on $K,$ which is weaker than the $w^*-$topology.
\end{proof}

Next, we recall a lemma given as \cite[Lemma 4]{Namioka}. Due to its central role and the rudimentary arguments given in the original study, a more detailed argument is given below.

\begin{lem}[\cite{Namioka}]   \label{separability-lemma}
  Let $K$ be a compact Hausdorff space and
  $M \subset C(K)$ be closed bounded and separable with respect to the pointwise topology. Then, $M$ is norm separable in $C(K).$
\end{lem}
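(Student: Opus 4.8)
The plan is to reduce the statement to a countability argument about the pointwise topology on a compact metric space. First I would fix a countable subset $D=\{f_n:n\in\za\}\subset M$ that is dense in $(M,\tau_p)$, the topology of pointwise convergence on $K$. The idea is that although $K$ itself may be huge, the functions in $M$ only ``see'' $K$ through a countable amount of data once we restrict attention to $D$. Concretely, I would form the map $\Phi:K\to\re^{\za}$ (or into $[-c,c]^{\za}$, where $c=\sup_{f\in M}\nrm{f}_\infty<\infty$) given by $\Phi(k)=(f_n(k))_{n\in\za}$. This map is continuous when $\re^{\za}$ carries the product topology, its image $L=\Phi(K)$ is a compact \emph{metrizable} space, and $\Phi$ is a quotient map onto $L$. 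The key point is that every $g\in M$ factors through $\Phi$ up to pointwise limits: since $g$ is a $\tau_p$-limit of a net from $D$, and each $f_n$ is constant on the fibers $\Phi^{-1}(\ell)$, the function $g$ is also constant on those fibers, hence descends to a well-defined bounded function $\tilde g$ on $L$.

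Next I would argue that $\tilde g$ is actually continuous on $L$. This is the heart of the matter. Each $f_n$ descends to a continuous function $\tilde f_n$ on the metric compactum $L$ (indeed $\tilde f_n$ is just the $n$-th coordinate projection restricted to $L$, which is trivially continuous), so $\widetilde D=\{\tilde f_n\}$ lies in $C(L)$ and separates points of $L$ by construction. Now $\widetilde M:=\{\tilde g:g\in M\}$ is the pointwise closure in $\re^L$ of $\widetilde D$, and it is bounded. On a \emph{metrizable} compact space one can invoke the classical fact — this is essentially the content of \cite[Lemma 4]{Namioka} in the metrizable case, or can be extracted from an Ascoli/Grothendieck double-limit argument — that a bounded pointwise-closed subset of $C(L)$ that is pointwise separable is norm separable; but more directly, since $L$ is compact metric, $C(L)$ is itself norm separable, so any subset of $C(L)$ is norm separable. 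Thus the real work is only to show $\widetilde M\subset C(L)$, i.e. that the pointwise limit of a net of continuous functions on $L$ that happens to lie in $\widetilde M$ is continuous. For this I would use that $L$ is metrizable together with sequential compactness: given $\ell_j\to\ell$ in $L$, lift to $k_j\in K$ with $\Phi(k_j)=\ell_j$ and $k\in K$ with $\Phi(k)=\ell$; one shows $g(k_j)\to g(k)$ by approximating $g$ uniformly on the relevant countable set by elements of $D$ and using continuity of those.

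Having established $\widetilde M\subset C(L)$ with $C(L)$ norm separable, the final step is to transfer norm separability back to $M\subset C(K)$. The map $g\mapsto\tilde g$ is a linear isometry from $(M,\nrm{\cdot}_\infty)$ onto $(\widetilde M,\nrm{\cdot}_\infty)$: it is norm-decreasing because $\Phi$ is surjective onto $L$ so $\sup_L|\tilde g|=\sup_{\Phi(K)}|\tilde g|=\sup_K|g\circ\text{(section)}|$, and in fact equality of sup-norms holds since every value $g(k)$ equals $\tilde g(\Phi(k))$. Therefore a countable norm-dense subset of $\widetilde M$ pulls back to a countable norm-dense subset of $M$, which is exactly what we want.

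The main obstacle I anticipate is the continuity claim $\widetilde M\subset C(L)$: the factored function $\tilde g$ is a priori only a pointwise limit of continuous functions on $L$, hence merely of Baire class one, and one genuinely needs to use both the separability hypothesis (to get the countable model $L$) and the structure of $M$ as a closed subset of $C(K)$ — it is not enough that $\tilde g$ be \emph{some} pointwise limit; it must be realized as $g\circ\Phi$ with $g$ already continuous on the original $K$. Packaging this correctly — in particular making sure the section $L\to K$ and the uniform approximation on countable subsets interact properly — is where care is required; everything else is bookkeeping with isometries and the standard fact that $C$ of a compact metric space is separable.
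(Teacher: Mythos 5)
Your overall strategy is the same as the paper's: push $M$ isometrically into $C(L)$, where $L=\Phi(K)\subset[-c,c]^{\za}$ is compact metrizable, observe that $C(L)$ is norm separable, and pull a countable norm-dense set back through the isometry $g\mapsto\tilde g$. (The paper routes this through the intermediate quotient $\tilde K=K/\!\sim$ and a homeomorphism $\tilde K\cong U(K)$, but that is the same construction.) The well-definedness of $\tilde g$ via fiber-constancy and the isometry of $g\mapsto\tilde g$ are handled correctly.

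The gap is in the step you yourself flag as the heart of the matter, namely $\widetilde M\subset C(L)$. The mechanism you propose --- lift $\ell_j\to\ell$ to points of $K$ and then ``approximate $g$ uniformly on the relevant countable set by elements of $D$'' --- does not work: $D$ is only dense in $M$ for the topology of \emph{pointwise} convergence, which yields, for each \emph{finite} set of points and each $\ep$, some $f_n$ that is $\ep$-close at those finitely many points; it gives no uniform control on an infinite countable set, and $D$ need not be norm-dense in $M$ (if it were, there would be nothing to prove). Your worry about Baire class one is also a red herring: $\tilde g$ is not merely a pointwise limit of continuous functions, it is the factorization of the \emph{continuous} function $g$ through $\Phi$. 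Since $\Phi:K\to L$ is a continuous surjection from a compact space onto a Hausdorff space, it is a closed map and hence a quotient map, so any fiber-constant function whose composition with $\Phi$ is continuous is itself continuous; this single observation closes the gap (it is exactly what the paper's appeal to the quotient topology on $\tilde K$ accomplishes). If you prefer your lifting argument, it can be repaired without any reference to $D$: given $\ell_j\to\ell$, pass to a subnet of lifts $k_{j}$ converging in the compact space $K$ to some $k'$; then $\Phi(k')=\ell$ by continuity of $\Phi$, and $g(k_{j})\to g(k')=\tilde g(\ell)$ by continuity of $g$ and fiber-constancy --- but this is just a hands-on proof of the quotient-map fact.
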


\begin{proof}
We start by defining an equivalence on $K.$
$$
t\thicksim s \ :\Leftrightarrow \ \sup_{g\in M}\nrm{g(t)-g(s)}=0.
$$
The rules for the relations are simply given; let $\tilde{K}:=K/\thicksim$ be a quotient with the quotient topology and $q:K\to \tilde{K}$ be the quotient map.
Considering $M\subset B_{C(K)}(0,L)$ and $D\subset M$ as a countable dense set, define
$$
\Funk{U}{K}{\Pi_{g\in D}[-L,L]}{t}{g(t)}.
$$
Then, the product is a compact metric, and $\vp(K)$ is a compact subset. Moreover, $U= \tilde{U}\circ q,$ with
$$
\Funk{\tilde{U}}{\tilde{K}}{\Pi_{g\in D}[-L,L]}{t}{g(t).}
$$
Similarly, we have that for $f\in M,$ $f= \tilde{f} \circ q.$  Due to the continuity of $f,$ we have that $\tilde{f}$ is continuous \cite[Thm. 9, p. 95]{Kelley}. The continuity of $q$ and the compactness of $K$ give us that $\tilde{K}$ is compact.
It is simple to verify that $\tilde{U}$ is injective. Hence, $\tilde{K}$ and $U(K)$ are homeomorphic, and
$$
\hat{f}:=\tilde{f}\circ \tilde{U}^{-1} \mbox{ or } f(t)=\hat{f}\circ U(t).
$$
The first identity proves that $\hat{f}$ is continuous.
Additionally, because $\nrm{g(t)}=\nrm{\tilde{g}(U(t))}$,
pointwise denseness gives $\nrm{f(t)}=\nrm{\hat{f}(U(t))}.$ Therefore, $\nri{f}=\nri{\hat{f}}.$ Consequently,
$$
\Funk{V}{M}{C(U(K))}{f}{\hat{f}}
$$
is an isometry, and the norm-separability of $C(U(K))$ implies that $T(M)$ is norm separable; therefore, $M.$
\end{proof}

With the aim of obtaining the translation operator on $BUC(S;X)$ as a dual operator, the following proposition points to a sufficient condition.
\begin{pro}
Let $S$ be a Borel measure space and the measure $\mu$ be such that for all $f\in BUC(S),$
$$
\sup_{g\in B_{L^1(S}}\int_{S}\btr{fg}d\mu=\sup_{t\in S}\btr{f(t)}.
$$
Then, using
$L^1(S,X))^*=(L^1(S)\otimes_{\pi} X)^*=L(L^1(S),X^*),$ for
$$
\Funk{\iota}{BUC(S,X^*)}{L(L^1(S),X^*)}{f}{\bk{ g \mapsto  \bk{ x\mapsto \int_{S}<g(r)\otimes x,f(r)>d\mu(r)}}},
$$
we have $\nri{\iota(f)}=\nri{f}.$ The duality of $(L^1(S,X),BUC(S,X^*))$ is given by
$$
<g,f>=\int_S<g(r),f(r)>d\mu(r).
$$

If in addition $S$ is a subsemigroup of a group, and $\mu$ is an invariant measure. For $t\in S$ the translation operator 
$$
\Funk{T(t)}{BUC(S,X^*)}{BUC(S,X^*)}{f}{\bk{s\mapsto f(t+s)}, }
$$
is a restriction of a dual operator to
$$
\Funk{V(t)}{L^1(S,X)}{L^1(S,X)}{f}{\bk{ s\mapsto \left\{
\begin{array}{rcl} 
f(s-t) &:& s\in  t+S \\ 
0 &:& otherwise 
\end{array} \right. }.}
$$
\end{pro}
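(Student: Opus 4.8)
The proposition has two separate assertions, and the plan is to handle them in turn. First I would establish the isometry claim $\nri{\iota(f)}=\nri{f}$. The inequality $\nri{\iota(f)}\le\nri{f}$ is the easy direction: for $g\in B_{L^1(S)}$ and $x\in B_X$ we bound $|\langle g\otimes x,f\rangle|=|\int_S\langle g(r)x,f(r)\rangle d\mu(r)|\le\int_S|g(r)|\,\nrm{f(r)}\,d\mu(r)\le\nri{f}$, using $\nrm{x}\le1$ and $\|g\|_{L^1}\le1$; taking the supremum over the unit ball of $L^1(S)\otimes_\pi X$ gives the bound. The reverse inequality is where the hypothesis on $\mu$ enters: for fixed $r_0$ we pick $x^{**}$-approximations, i.e. $x\in B_X$ with $\langle x,f(r_0)\rangle$ close to $\nrm{f(r_0)}$, and then use the assumed duality $\sup_{g\in B_{L^1(S)}}\int_S|fg|\,d\mu=\sup_{t}|f(t)|$ applied to the scalar function $r\mapsto\langle x,f(r)\rangle$ (after noting $f(\cdot)$ is in $BUC(S)$ in each coordinate) to recover $\nrm{f(r_0)}$ in the limit. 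The duality formula for $(L^1(S,X),BUC(S,X^*))$ is then just the definition of $\iota$ unwound, together with the identification $(L^1(S)\otimes_\pi X)^*=L(L^1(S),X^*)$ recalled in the opening Remark.

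For the second assertion I would first verify that $V(t)$ is a well-defined bounded operator on $L^1(S,X)$: invariance of $\mu$ gives $\int_S\nrm{(V(t)f)(s)}\,d\mu(s)=\int_{t+S}\nrm{f(s-t)}\,d\mu(s)=\int_S\nrm{f(r)}\,d\mu(r)$, so $\nrm{V(t)}\le1$ (here the embedding $S\hookrightarrow G$ into a group is used to make sense of $s-t$ and of the characteristic function of $t+S$). The semigroup law $V(t+t')=V(t)V(t')$ follows by a direct substitution. Then the core computation is to check that $T(t)$ is the adjoint of $V(t)$ restricted to $BUC(S,X^*)\subset L(L^1(S),X^*)$: for $f\in BUC(S,X^*)$ and $g\in L^1(S,X)$ one computes
\begin{eqnarray*}
\langle V(t)g,f\rangle&=&\int_S\langle(V(t)g)(r),f(r)\rangle\,d\mu(r)=\int_{t+S}\langle g(r-t),f(r)\rangle\,d\mu(r)\\
&=&\int_S\langle g(s),f(t+s)\rangle\,d\mu(s)=\langle g,T(t)f\rangle,
\end{eqnarray*}
where the middle step is the change of variables $s=r-t$, legitimate because $\mu$ is invariant and $r\mapsto r-t$ carries $t+S$ onto $S$. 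Since $T(t)f\in BUC(S,X^*)$ whenever $f\in BUC(S,X^*)$ (translation preserves bounded uniform continuity on a subsemigroup of a group), this shows $T(t)=V(t)^*_{|BUC(S,X^*)}$.

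The main obstacle I expect is the reverse inequality in the isometry, specifically the interplay between the scalar hypothesis on $\mu$ and the vector-valued norm: one must pass from ``$\sup_g\int|\langle x,f\rangle g|\,d\mu=\sup_t|\langle x,f(t)\rangle|$ for each fixed $x$'' to ``$\sup_{g\otimes x}|\langle g\otimes x,f\rangle|=\sup_t\nrm{f(t)}$'', which requires choosing, for a near-maximizing $t_0$, a functional $x\in B_X$ that nearly attains $\nrm{f(t_0)}$ and checking that the scalar supremum is then genuinely close to $\nrm{f(t_0)}$; a compactness or approximate-selection argument, together with the uniform continuity of $f$, closes this gap. The measure-theoretic bookkeeping for $V(t)$ (well-definedness of the piecewise definition and measurability of $s\mapsto f(s-t)\mathbf 1_{t+S}(s)$) is routine once $S$ is embedded in the ambient group and $\mu$ is taken invariant, so I would treat it briefly.
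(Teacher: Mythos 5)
Your argument is sound, but there is nothing in the paper to measure it against: this proposition is stated without any proof (it is followed immediately by the Applications section). The closest analogue is the paper's proof of Proposition \ref{BUC_closed_in_Dual}, which obtains the isometry in the special case $S=\re$ with Lebesgue measure by a regularization argument: choose $t_m$ with $\nrm{f(t_m)}\to\nri{f}$, norming functionals via Hahn--Banach, and test against $g_{\ep,m}(s)=\vp_{\ep}(s-t_m)x_m^*$ for a mollifier $\vp_{\ep}$. The scalar hypothesis on $\mu$ in the present proposition is precisely what replaces that mollifier step on a general measure space, and you use it the right way: testing $\iota(f)$ against elementary tensors $g\otimes x$ reduces the lower bound to the assumed scalar identity applied to $r\mapsto \langle x,f(r)\rangle\in BUC(S)$, with $x\in B_X$ chosen to nearly norm $f(r_0)$ (no passage to the bidual is needed, since $\nrm{f(r_0)}_{X^*}=\sup_{x\in B_X}\btr{\langle x,f(r_0)\rangle}$ by definition). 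One cosmetic point worth making explicit: the hypothesis carries the modulus inside the integral, $\int_S\btr{fg}\,d\mu$, while the pairing is $\int_S fg\,d\mu$; replacing $g$ by $g$ times the measurable unimodular function $\overline{\mathrm{sgn}(fg)}$ shows the two suprema over $B_{L^1(S)}$ agree, so the reduction is legitimate. Your adjoint computation $\langle V(t)g,f\rangle=\langle g,T(t)f\rangle$ via the substitution $s=r-t$ and the invariance of $\mu$, together with the observation that $BUC(S,X^*)$ is invariant under $V(t)^*$, is the standard and evidently intended argument for the second assertion.
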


\section{Applications}
After the given general approach, we show to which cases this may apply. Moreover, some examples are given, which show the restrictions.

\subsection{Ergodic Results}
To obtain ergodic results one may apply the general theory of \cite{Gerlach}. 
In this section an application to the sun-dual semigroup is given, using the compactness of the convex semigroup 
$\overline{co}^{w^*}\fk{\semig{T^{\sun}}{\rep}}.$ 
It is shown that $N(A^{\sun})$ is complemented in $X^{\sun}.$

An application of $\scTOS$ is found in \cite{Gerlach}, where the theory of norming dual pairs is discussed. Note that $(X,X^{\sun},<\cdot,\cdot>)$ is such a norming dual pair. 
We recall that 
$$
C^{\sun}(r):=\frac{1}{r}\int_0^rT^{\sun}(s)ds \in \overline{co}^{w^*}\fk{\semig{T^{\sun}}{\rep}}
$$
and
$$
(T^{\sun}(t)-I)C^{\sun}(r)x^{\sun}\to 0 \mbox{ in } \nrm{\cdot}.
$$
Thus, \cite[Lemma 4.5]{Gerlach} leads to the following.
\begin{cor} Let $\bk{T(t}_{t\ge 0}$ be a $C_0-$semigroup with generator $A$. Then, we have, for the mean of the dual semigroup and an appropriate net $\net{t}{\la}{\Lambda}$,
$$
\sigma(X^{\sun},X)-\netlim{\la}{\Lambda}C^{\sun}(r_{\la})x^{\sun} \in N(A^{\sun}),
$$
and
$
\topoks-\netlim{\la}{\Lambda}C^{\sun}(r_{\la})=Q^{\sun}$ is a projection onto $N(A^{\sun}).$ 
\end{cor}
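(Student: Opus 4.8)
The plan is to obtain the corollary as an instance of the mean ergodic theorem for norming dual pairs, \cite[Lemma 4.5]{Gerlach}, applied to the norming dual pair $(X,X^{\sun},<\cdot,\cdot>)$ and to the uniformly bounded family of averages $\bk{C^{\sun}(r):r>0}\subset L(X^{\sun})$. First I would check the two hypotheses of that lemma. The first is \emph{asymptotic regularity}: $\nrm{(T^{\sun}(t)-I)C^{\sun}(r)x^{\sun}}\to 0$ as $r\to\infty$ for every $x^{\sun}\in X^{\sun}$ and $t\ge 0$, which is exactly the relation recalled just above. The second is \emph{relative $w^*$-compactness} of the averages: since $C^{\sun}(r)\in\overline{co}^{w^*}\fk{\semig{T^{\sun}}{\rep}}$ and, by Prop.~\ref{first-results}(3), the set $\overline{co}^{w^*}\fk{\semig{T^{\sun}}{\rep}}$ is a $w^*$-compact subset of $L(X^{\sun})\subset L(X^{\sun},X^{*})=(X^{\sun}\otimes_{\pi}X)^{*}$ (the opening Remark), the averages lie in a $w^*$-compact set.

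Granting this, I would run the usual argument. By $w^*$-compactness there are a net $\net{r}{\la}{\Lambda}$ with $r_{\la}\to\infty$ and an operator $Q^{\sun}\in\overline{co}^{w^*}\fk{\semig{T^{\sun}}{\rep}}$ with $\topoks-\netlim{\la}{\Lambda}C^{\sun}(r_{\la})=Q^{\sun}$, i.e.\ $C^{\sun}(r_{\la})x^{\sun}\to Q^{\sun}x^{\sun}$ in $\sigma(X^{\sun},X)$ for each $x^{\sun}\in X^{\sun}$ (on this bounded family the pointwise and the $w^*$-topologies agree). To identify $Q^{\sun}x^{\sun}$, fix $t\ge 0$ and $x^{\sun}\in X^{\sun}$. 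As $T^{\sun}(t)$ is the restriction to $X^{\sun}$ of the adjoint $T(t)^{*}$, it is $w^*-w^*-$continuous (cf.\ the Example), so $(T^{\sun}(t)-I)C^{\sun}(r_{\la})x^{\sun}\to(T^{\sun}(t)-I)Q^{\sun}x^{\sun}$ in $\sigma(X^{\sun},X)$; but by asymptotic regularity the same net tends to $0$ in norm, hence in $\sigma(X^{\sun},X)$. Uniqueness of limits gives $T^{\sun}(t)Q^{\sun}x^{\sun}=Q^{\sun}x^{\sun}$ for all $t\ge 0$, and since $\semig{T^{\sun}}{\rep}$ is a $C_0-$semigroup on $X^{\sun}$ with generator $A^{\sun}$, being fixed by the whole semigroup is equivalent to lying in $N(A^{\sun})$; this is the first displayed statement.

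It then remains to show $Q^{\sun}$ is a projection onto $N(A^{\sun})$. Every $z\in N(A^{\sun})$ satisfies $T^{\sun}(s)z=z$ for all $s\ge 0$, hence $C^{\sun}(r)z=z$ for all $r>0$, and passing to the limit along $\net{r}{\la}{\Lambda}$ gives $Q^{\sun}z=z$; applied to $z=Q^{\sun}x^{\sun}\in N(A^{\sun})$ this yields $\fk{Q^{\sun}}^{2}=Q^{\sun}$. Together with the inclusion of the range of $Q^{\sun}$ in $N(A^{\sun})$ from the previous paragraph and the identity $Q^{\sun}z=z$ on $N(A^{\sun})$, one concludes that $Q^{\sun}$ is a projection with range $N(A^{\sun})$.

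The step I expect to be the main obstacle is the identification $T^{\sun}(t)Q^{\sun}x^{\sun}=Q^{\sun}x^{\sun}$: there one interchanges the norm-null net $(T^{\sun}(t)-I)C^{\sun}(r)x^{\sun}$ with an operator net converging only in the weak-star operator topology, so the argument genuinely uses the special feature, emphasised throughout the paper, that for the sun-dual the operators $T^{\sun}(t)$ \emph{are} $w^*-w^*-$continuous, whereas in the general framework they need not be. The remaining points — that the norm topology refines $\sigma(X^{\sun},X)$ and that $C^{\sun}(r)$ acts as the identity on $N(A^{\sun})$ — are routine.
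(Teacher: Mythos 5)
Your proposal is correct and follows essentially the same route as the paper: the paper also invokes \cite[Lemma 4.5]{Gerlach} for the inclusion $Q^{\sun}x^{\sun}\in N(A^{\sun})$, then uses $C^{\sun}(r)z\equiv z$ on $N(A^{\sun})$ to get $Q^{\sun}z=z$ and the idempotency $Q^{\sun}Q^{\sun}=Q^{\sun}$. You merely unpack the citation (asymptotic regularity plus $w^*$-compactness of $\overline{co}^{w^*}\fk{\semig{T^{\sun}}{\rep}}$ and the $w^*$-$w^*$-continuity of $T^{\sun}(t)$), which the paper leaves implicit.
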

\begin{proof}
By \cite[Lemma 4.5]{Gerlach}, we have $Q^{\sun}x^{\sun}\in N(A^{\sun}).$ Let $x^{ \sun}\in N(A^{\sun})$; then, 
$C(r)x^{\sun}\equiv x^{\sun}=Q^{\sun}x^{\sun}.$ It remains to prove that $Q^{\sun}Q^{\sun}=Q^{\sun}.$ 
If $x^{\sun}\in X^{\sun}$ and $Q^{\sun}x^{\sun}=y^{\sun}\in N(A^{\sun})$, then 
$$
Q^{\sun}Q^{\sun}x^{\sun}=Q^{\sun}y^{\sun}=y^{\sun}=Q^{\sun}x^{\sun},
$$
which concludes the proof.
\end{proof}

\subsection{Application to general Banach spaces}
In this section we show that in counter to \cite{Witz} the compatification of a bounded $C_0-$semigroup resides in a smaller operator space than $L(X,X^{**}).$

We start by considering the Banach algebra,
\begin{equation}
\LT(X^{\sun}):=\bk{T\in L(X^{\sun}): T^*(X^{\sun\sun})\subset X^{\sun\sun}}\subset L(X^{\sun},X^*),
\end{equation}
the operator space
\begin{eqnarray}
\LT(X,X^{\odot\odot})&:=&\bk{U\in L(X,X^{\odot\odot}): \ U^*(X^{\odot})\subset X^{\odot}, U^{\sun*}(X^{\sun\sun})\subset X^{\sun\sun}} \\
&\subset&  L(X,X^{\sun*}), \nonumber
\end{eqnarray}
and
$$
\Funk{\eta^{\sun}}{\LT(X,X^{\sun\sun})}{\LT(X^{\sun})}{V}
{\bk{x^{\sun}\mapsto \eta^{\sun}V: x\mapsto <x,V^*_{|X^{\sun}}x^{\sun}>(=<Vx,x^{\sun}>)}}
$$
endowing $\LT(X^{\sun})$ and $\LT(X,X^{\odot\odot})$ with their relative $\topoks$ topology. Noting that for general Banach spaces \cite[$(2^{\prime})$, p. 154]{Koethe2}
$$
L(X,X^{\sun*})\cong (X^{\sun}\otimes_{\pi}X)^* \cong L(X^{\sun},X^*),
$$
in the next lemma, it is proven that if the operator spaces depending on the semigroup $\bk{T(t)}_{t\ge0}$ are considered, then $*$ may be replaced by $\sun.$

\begin{lem}\label{eta_sun-algebra_isomorphism}
Let
$$
\Funk{\eta^{\sun}}{\LT(X,X^{\sun\sun})}{\LT(X^{\sun})}{V}
{\bk{x^{\sun}\mapsto \eta^{\sun}V: x\mapsto <x,V^*_{|X^{\sun}}x^{\sun}>(=<Vx,x^{\sun}>)}}
$$
Then, $\eta^{\sun}(T(t))=T^{\sun}(t),$
$\eta^{\sun}$ is an isomorphism and
$\topoks-\topoks$-continuous, and $\eta(V\circ U)=U^{\sun}V^{\sun}.$ By the use of $(\eta^{\sun})^{-1},$ the operator space $\LT(X,X^{\sun\sun})$ becomes a Banach algebra, with
$$
\Funk{U\circ V}{X}{X^{\sun\sun}}{x}{\bk{ x^{\odot }\mapsto <Vx,U^{\odot}x^{\odot}>}},
$$
\end{lem}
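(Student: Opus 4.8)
The plan is to unwind the definitions and verify the claimed properties one at a time, using the duality $L(X,X^{\sun*}) \cong (X^{\sun} \otimes_\pi X)^* \cong L(X^{\sun},X^*)$ as the ambient identification. First I would check the identity $\eta^{\sun}(T(t)) = T^{\sun}(t)$: by definition $\eta^{\sun}V$ sends $x^{\sun}$ to the functional $x \mapsto \langle Vx, x^{\sun}\rangle$, so for $V = T(t)$ this is $x \mapsto \langle T(t)x, x^{\sun}\rangle = \langle x, T^*(t)x^{\sun}\rangle$, and since $x^{\sun}\in X^{\sun}$ we have $T^*(t)x^{\sun} = T^{\sun}(t)x^{\sun}$, giving $\eta^{\sun}(T(t)) = T^{\sun}(t)$ as an element of $L(X^{\sun})$. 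The membership $T^{\sun}(t) \in \LT(X^{\sun})$ (i.e.\ $T^{\sun}(t)^*(X^{\sun\sun}) \subset X^{\sun\sun}$) is exactly the analogue of the defining condition applied to the sun-dual semigroup, so this is consistent.

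Next I would establish that $\eta^{\sun}$ is a bijection with the stated inverse behavior. Injectivity: if $\eta^{\sun}V = 0$ then $\langle Vx, x^{\sun}\rangle = 0$ for all $x\in X$, $x^{\sun}\in X^{\sun}$; since $X^{\sun}$ is norming for $X^{\sun\sun}$ (indeed $(X,X^{\sun})$ is a norming dual pair, and $Vx \in X^{\sun\sun} \subset X^{\sun*}$), this forces $Vx = 0$ for all $x$, hence $V = 0$. Surjectivity: given $T \in \LT(X^{\sun})$, the bilinear form $(x,x^{\sun}) \mapsto \langle x, Tx^{\sun}\rangle$ is bounded on $X \times X^{\sun}$, hence defines an operator $V : X \to X^{\sun*}$; the hypothesis $T^*(X^{\sun\sun})\subset X^{\sun\sun}$ is precisely what guarantees $V$ actually maps into $X^{\sun\sun}$ and that $V^{\sun*}(X^{\sun\sun})\subset X^{\sun\sun}$, while the condition $V^*(X^{\sun})\subset X^{\sun}$ should be read off from $T$ being an operator \emph{on} $X^{\sun}$ (so that the form is the restriction of a form on $X \times X^*$ via $T$). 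One checks $\eta^{\sun}V = T$ directly. For the multiplicativity statement $\eta^{\sun}(V\circ U) = U^{\sun}V^{\sun}$: expand $\eta^{\sun}(V\circ U)x^{\sun}$ acting on $x$, using the given formula $(U\circ V)x = \{x^{\odot}\mapsto \langle Vx, U^{\odot}x^{\odot}\rangle\}$ for the product in $\LT(X,X^{\sun\sun})$, and match it against $\langle x, (U^{\sun}V^{\sun})x^{\sun}\rangle$ by two applications of the adjoint relation; the order-reversal is the usual contravariance of passing to $\eta^{\sun}$.

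For the topological claim, $w^*$--$w^*$-continuity of $\eta^{\sun}$: a bounded net $V_\lambda \to V$ in the $w^*$-topology of $L(X,X^{\sun*}) = (X^{\sun}\otimes_\pi X)^*$ means $\langle V_\lambda x, x^{\sun}\rangle \to \langle Vx, x^{\sun}\rangle$ for all $x, x^{\sun}$; but this is exactly the statement that $\eta^{\sun}V_\lambda \to \eta^{\sun}V$ in the $w^*$-topology of $L(X^{\sun},X^*) = (X^{\sun}\otimes_\pi X)^*$ under the same pairing, so continuity is immediate (indeed the two $w^*$-topologies literally agree under the identification, and $\eta^{\sun}$ is essentially the identity transposed). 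Finally, the Banach-algebra structure on $\LT(X,X^{\sun\sun})$ transported via $(\eta^{\sun})^{-1}$ from the composition algebra $\LT(X^{\sun})$ is well-defined precisely because $\eta^{\sun}$ is a linear isomorphism intertwining composition with the opposite product, and the explicit formula for $U\circ V$ is obtained by transporting the formula for $\eta^{\sun}$ of a composite back through the isomorphism; associativity and submultiplicativity of the norm are inherited.

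\textbf{Main obstacle.} I expect the delicate point to be the surjectivity of $\eta^{\sun}$ together with the bookkeeping of the two stability conditions defining $\LT(X,X^{\sun\sun})$ --- namely verifying that an operator $T\in\LT(X^{\sun})$ genuinely produces a $V$ landing in $X^{\sun\sun}$ (not merely in $X^{\sun*}$) with \emph{both} required invariances $U^*(X^{\sun})\subset X^{\sun}$ and $U^{\sun*}(X^{\sun\sun})\subset X^{\sun\sun}$. This is where the asymmetry between the $*$-version and the $\sun$-version matters, and it is the content of the remark preceding the lemma that ``$*$ may be replaced by $\sun$'' when the operators come from the semigroup; making that replacement rigorous for the whole algebra $\LT$, rather than just for the individual operators $T(t)$, is the crux.
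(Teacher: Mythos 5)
Your proposal is correct and follows essentially the same route as the paper: surjectivity by restricting the adjoint $T^*$ of a given $T\in\LT(X^{\sun})$ to $X^{\sun\sun}$ and composing with the canonical embedding $j:X\to X^{\sun\sun}$, injectivity from the fact that $X^{\sun}$ is norming on $X^{\sun\sun}$ (the paper phrases this as the isometry $\nrm{\eta(U)}=\nrm{U}$), and the order-reversing multiplicativity by a direct duality computation. The points you spell out in more detail than the paper ($\eta^{\sun}(T(t))=T^{\sun}(t)$, the $w^*$--$w^*$-continuity as an identity of pairings, and the bookkeeping of the two invariance conditions in the surjectivity step) are exactly the steps the paper leaves implicit, so there is no substantive divergence.
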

\begin{proof}[Proof of Lemma \ref{eta_sun-algebra_isomorphism}]
To prove that $\eta^{\sun}$ is surjective, let $U\in \LT(X^{\sun})$ and $V:=U^*_{|X^{\sun\sun}}\in L(X^{\sun\sun}),$ and if $j:X\to X^{\sun\sun}$ denotes the natural embedding, we can claim $\eta^{\sun}(\bk{x\mapsto V(jx)})=U.$
Because
$$
<x,Ux^{\sun}>=<jx,Ux^{\sun}>=<V(jx), x^{\sun}>=<x,\eta^{\sun}(Vj)x^{\sun}>,
$$
which proves the claim. To verify the injectivity, note that
$$
\nrm{\eta(U)}=\sup_{x\in B_X}\sup_{x^{\sun}\in B_{X^{\sun}}}\btr{<Ux,x^{\sun}>}=\nrm{U}.
$$

Note that $V, U\in \LT(X,X^{\sun\sun});$ then,
\begin{eqnarray*} \label{algebra-structure}
<x,\eta^{\sun}(V)\eta^{\sun}(U)x^{\sun}>
&=& <x,V^{\sun}_{|X^{\sun}}U^{\sun}_{|X^{\sun}}x^{\sun}>   \\
&=&<Vx,U^{\sun} x^{\sun}> = <(U\circ V)x,x^{\sun}>.
\end{eqnarray*}
\end{proof}

As $\semig{T^{\sun}}{\rep}$ is a bounded subset of $L(X^{\sun},X^*)$, its $\topoks$ closure is compact in $L(X^{\sun},X^*).$ Using
$$\semig{T}{\rep}\subset (\eta^{\sun})^{-1}(\overline{\semig{T^{\sun}}{\rep}}^{w^*})$$
is densly contained, we may consider the compactification in a smaller operator space.

\begin{theo} \label{Fundamental_Theorem}
Let $\scST:=\bk{T(t)}_{t\ge0}$ and $\scSTS:=\bk{T^{\sun}(t)}_{t\ge0}.$ Then,
$$\overline{\scSTS}^{\topowsOT}\subset L(X^{\sun}),$$
and  for 
$$
\scSTO=(\eta^{\sun})^{-1}\fk{\overline{\semig{T^{\sun}}{\rep}}^{w^*}}$$
holds.

\begin{enumerate}
\item  $\scST \subset \scSTO\subset\scTO \subset \scATO\subset L(X,X^{\sun\sun})$,
\item $\scSTO$ is compact and a right semitopological semigroup,
\item $\scTO$ is convex, compact and a right semitopological semigroup, and
\item $\scATO$ is absolutely convex, compact and a right semitopological semigroup
\end{enumerate}
with respect to the $\topoks-$topology.\\
\end{theo}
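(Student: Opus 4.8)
The strategy is to transport everything through the algebra isomorphism $\eta^{\sun}$ of Lemma \ref{eta_sun-algebra_isomorphism}, so that the hard analytic work is done on the side of $L(X^{\sun},X^*)$, where the predefined $\topoks$-topology (Remark following the statement on $L(X,Y^*)=(X\otimes_\pi Y)^*$) gives a genuine weak-star topology with compact unit ball by Tychonov. First I would record the basic fact that $\semig{T^{\sun}}{\rep}$ is a bounded subset of $\LT(X^{\sun})\subset L(X^{\sun},X^*)$; since $\|T^{\sun}(t)\|\le M$ for all $t$, the set lies in $M\cdot B_{L(X^{\sun},X^*)}$, and the latter ball is $\topoks$-compact. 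Hence $\overline{\scSTS}^{\topoks}$, and likewise its convex and absolutely convex hulls' closures, are $\topoks$-compact subsets of $L(X^{\sun},X^*)$. The point that these closures still land in $L(X^{\sun})$ (not merely $L(X^{\sun},X^*)$) is exactly the computation in the Example earlier in the excerpt: for $S$ in the closure and $x^{\sun}\in X^{\sun}$, $T^{\sun}(t)Sx^{\sun}=ST^{\sun}(t)x^{\sun}\to Sx^{\sun}$ in norm as $t\to 0$, so $Sx^{\sun}\in X^{\sun}$; I would observe that this uses only the $\topoks$-continuity of each $T^{\sun}(t)$ together with strong continuity of $\semig{T^{\sun}}{\rep}$ on $X^{\sun}$, hence passes to convex and absolutely convex combinations and then to limits.

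Next I would transport along $\eta^{\sun}$. By Lemma \ref{eta_sun-algebra_isomorphism}, $\eta^{\sun}:\LT(X,X^{\sun\sun})\to\LT(X^{\sun})$ is an isometric algebra isomorphism that is $\topoks$-$\topoks$-continuous, sends $T(t)$ to $T^{\sun}(t)$, and satisfies $\eta^{\sun}(V\circ U)=U^{\sun}V^{\sun}$ (note the order reversal, which reflects the passage to adjoints). Because $\eta^{\sun}$ is a homeomorphism for the relative $\topoks$-topologies on these two operator spaces, I define
$$
\scSTO=(\eta^{\sun})^{-1}\bigl(\overline{\scSTS}^{\topoks}\bigr),\qquad
\scTO=(\eta^{\sun})^{-1}\bigl(\overline{\mathrm{co}}^{\topoks}\scSTS\bigr),\qquad
\scATO=(\eta^{\sun})^{-1}\bigl(\overline{\mathrm{ac}}^{\topoks}\scSTS\bigr),
$$
and the identities $\eta^{\sun}(T(t))=T^{\sun}(t)$ give $\scST\subset\scSTO$, while monotonicity of closure under taking (absolutely) convex hull gives $\scSTO\subset\scTO\subset\scATO$; since all these sets are images under $(\eta^{\sun})^{-1}$ of compact sets and $(\eta^{\sun})^{-1}$ is continuous, they are $\topoks$-compact inside $L(X,X^{\sun\sun})$. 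This proves (1), and the compactness halves of (2), (3), (4); convexity and absolute convexity in (3), (4) follow because $\eta^{\sun}$ is linear, so $(\eta^{\sun})^{-1}$ carries (absolutely) convex sets to (absolutely) convex sets.

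It remains to establish the semigroup and right-semitopological-semigroup properties. Here I would argue as in Proposition \ref{first-results}, but with multiplication order tracked through the antihomomorphism $\eta^{\sun}$. On the $X^{\sun}$-side, $\overline{\scSTS}^{\topoks}$ is a \emph{left} semitopological semigroup under composition (the map $R\mapsto RS^{\sun}$ is $\topoks$-continuous for fixed $S^{\sun}$, because $\langle x,RS^{\sun}x^{\sun}\rangle$ depends on $R$ only through its value at the fixed vector $S^{\sun}x^{\sun}$, and the orbit-closure condition keeps us inside the compact set), and closure of the product uses the same double-limit argument as in Proposition \ref{first-results}, invoking Definition \ref{defi-dual-representation}(4) for $\semig{T^{\sun}}{\rep}$ as a dual representation on $X^{\sun}$; the same computation applies verbatim to the convex and absolutely convex hulls since a product of (absolutely) convex combinations is again one. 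Transporting back, $U\circ V$ on the $X$-side corresponds to $\eta^{\sun}(V)\eta^{\sun}(U)=V^{\sun}U^{\sun}$ composed in the reversed order, so a \emph{left} semitopological structure on the $X^{\sun}$-side becomes a \emph{right} semitopological structure on the $X$-side: the map $U\mapsto U\circ V$ corresponds under $\eta^{\sun}$ to $R\mapsto R\,\eta^{\sun}(V)$, i.e. right multiplication, which is exactly the $\topoks$-continuous one. \textbf{The main obstacle} I anticipate is this bookkeeping of which variable the multiplication is continuous in: one must check carefully that "left continuous on $L(X^{\sun})$" really does translate to "right continuous on $L(X,X^{\sun\sun})$" under the composition $\circ$ defined in Lemma \ref{eta_sun-algebra_isomorphism}, and that the $\topoks$-continuity of $\eta^{\sun}$ and its inverse (which is the substance of that lemma) is strong enough to pass the continuity of the semigroup operation back and forth; once the order-reversal is pinned down, the compactness and algebraic closure are routine consequences of Tychonov and the Example.
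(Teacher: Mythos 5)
Your proposal is correct and follows essentially the route the paper intends: the paper states this theorem without a separate proof, relying on the two sentences preceding it (boundedness of $\semig{T^{\sun}}{\rep}$ in $L(X^{\sun},X^*)$ plus Tychonov for $w^*$-compactness, the Example showing the closure lands in $L(X^{\sun})$, Prop.~\ref{first-results} for the left-semitopological semigroup structure on the $X^{\sun}$-side, and Lemma~\ref{eta_sun-algebra_isomorphism} to transport back via $\eta^{\sun}$ with the order reversal turning left- into right-semitopological). Your reconstruction fills in exactly these steps, including the one genuinely delicate point (the antihomomorphism bookkeeping), so it matches the paper's argument.
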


A connection with the deLeeuw--Glicksberg theory note is provided as follows:
\begin{theo} \label{DG-1}
If $\bk{T(t)}_{t\in\rep}$ is E.-wap, then
$X=X_{ap}\oplus X_0$, with a projection $\funk{V}{X}{X}$ satisfying $V(X)=X_{ap}.$ For the dual semigroup, we have
$X_a^{\sun}=\Xsr, \ X_0^{\sun}=X_{fl}^{\sun},$ with $X^{\sun}=\Xsr\oplus X_{fl}^{\sun},$ with a projection $\funk{P^{\sun}}{X^{\sun}}{X^{\sun}}$ satisfying $P^{\sun}(X^{\sun})=X^{\sun}_a.$ In this setting, we have $P^{\sun}=\eta^{\sun}(V),$ and the minimal idempotent is unique.
\end{theo}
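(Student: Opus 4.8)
The plan is to run the general machinery of Sections 3 and 4 on the dual representation $\semig{T^\sun}{\rep}$ on $X^\sun$, to run the classical deLeeuw--Glicksberg theory on $\semig{T}{\rep}$ on $X$, and to match the two splittings through the isomorphism $\eta^\sun$ of Lemma~\ref{eta_sun-algebra_isomorphism}.

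First I would supply the hypothesis of the abelian criterion. Since $\semig{T}{\rep}$ is E.-wap, every orbit $O(x)=\bk{T(s)x:s\ge0}$ is relatively weakly compact in $X$. For fixed $x\in X$, $x^\sun\in X^\sun$ the map $\Psi\colon X\to C_b(\rep)$, $\Psi(y)(s):=\dual{T(s)y,x^\sun}=\dual{y,T^\sun(s)x^\sun}$, is bounded and linear, hence weakly continuous, and it sends $O(x)$ onto the translation orbit of $g:=\Psi(x)$; therefore $g$ has relatively weakly compact translation orbit, i.e. $g\in W(\rep)$. As $X$ separates $X^\sun$, Theorem~\ref{T-abelian-condition} applied to $\semig{T^\sun}{\rep}$ gives $UVx^\sun=VUx^\sun$ for all $U,V\in\scSTOS$ and all $x^\sun\in X^\sun$; hence $\scSTOS$ is abelian, any two minimal idempotents $P_1,P_2$ satisfy $P_1=P_1P_2=P_2P_1=P_2$, and the minimal idempotent $P^\sun$ of $\scSTOS$ is unique. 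By Proposition~\ref{first-results}(5), $X^\sun=X_a^\sun\oplus X_0^\sun$ with $X_a^\sun=P^\sun X^\sun\subset\Xsr$ and $X_0^\sun=(I-P^\sun)X^\sun\subset X_{fl}^\sun$, and by Theorem~\ref{PTP-is-compact-group}(3) the set $G:=P^\sun\scSTOS P^\sun=P^\sun\scSTOS$ is a compact abelian group with identity $P^\sun$.

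The core of the proof is promoting the two inclusions to equalities. If $x^\sun\in X_{fl}^\sun$, pick a net with $w^*$-$\lim_\al T^\sun(s_\al)x^\sun=0$ and, by compactness of $\scSTOS$, a subnet along which $T^\sun(s_\al)\to S$ pointwise in $w^*$; then $Sx^\sun=0$, the element $P^\sun S\in G$ has an inverse $W\in G$ with $WP^\sun S=P^\sun$, and $P^\sun x^\sun=WP^\sun(Sx^\sun)=0$, so $x^\sun\in X_0^\sun$ and $X_0^\sun=X_{fl}^\sun$. For the reversible part let $x^\sun\in\Xsr$ and split $x^\sun=a+b$ with $a=P^\sun x^\sun\in X_a^\sun$ and $b=(I-P^\sun)x^\sun\in X_0^\sun=X_{fl}^\sun$; choose a net annihilating $b$ in the $w^*$-operator topology, refine it so that $T^\sun(s_\al)\to S$ pointwise in $w^*$, and observe $w^*$-$\lim_\al T^\sun(s_\al)x^\sun=Sa=:a'$, which lies in $X_a^\sun$ since $\scSTOS$ commutes with $P^\sun$. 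Applying reversibility of $x^\sun$ to this net produces $\net{t}{\gamma}{\Gamma}$ with iterated $w^*$-limit equal to $x^\sun$; by Definition~\ref{defi-dual-representation}(4) the inner limit is $T^\sun(t_\gamma)a'$, so $x^\sun=w^*$-$\lim_\gamma T^\sun(t_\gamma)a'$ lies in $\overline{O(a')}^{w^*}$, which is contained in $X_a^\sun$ because $P^\sun$ commutes with $\scSTOS$ and fixes $a'$. Hence $b=0$, $\Xsr=X_a^\sun$, and $X^\sun=\Xsr\oplus X_{fl}^\sun$ with projection $P^\sun$. I expect this last step to be the main obstacle: it forces one to push the separate-continuity clause Definition~\ref{defi-dual-representation}(4) and the $w^*$-closedness of orbits through a double-limit reversal, since $P^\sun$ itself carries no continuity a priori.

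Finally I would treat the $X$-side and the identification. Relative weak compactness of the orbits forces $\scSTO=(\eta^\sun)^{-1}(\scSTOS)$ to map $X$ into $X$: for $U=w^*$-$\lim T^\sun(t_\gamma)$ one has $\dual{(\eta^\sun)^{-1}(U)x,x^\sun}=\lim\dual{T(t_\gamma)x,x^\sun}$ for every $x^\sun$, and a weakly convergent subnet $T(t_\gamma)x\to z\in X$ identifies $(\eta^\sun)^{-1}(U)x$ with $j_X z$. Thus $\scSTO$ coincides with the weak operator closure of $\semig{T}{\rep}$ in $L(X)$, and the classical deLeeuw--Glicksberg theory \cite{DeGli1},\cite{DeGli2},\cite{Krengel} (or the argument of the previous paragraph transplanted to $X$) provides its unique minimal idempotent $V$, a projection with $X=X_{ap}\oplus X_0$, where $X_{ap}:=VX$ is the reversible part and $X_0:=(I-V)X$ the space of flight vectors. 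By Lemma~\ref{eta_sun-algebra_isomorphism} the map $\eta^\sun$ is a bijective $w^*$-$w^*$-continuous algebra anti-isomorphism with $\eta^\sun(T(t))=T^\sun(t)$; since $\scSTO$ is $w^*$-compact (Theorem~\ref{Fundamental_Theorem}(2)) and $\scSTOS$ is Hausdorff, $\eta^\sun\colon\scSTO\to\scSTOS$ is a homeomorphic isomorphism of abelian compact semigroups, so it carries the $\le_L$-minimal idempotent $V$ to a $\le_L$-minimal idempotent of $\scSTOS$. By uniqueness $\eta^\sun(V)=P^\sun$, which by the definition of $\eta^\sun$ is exactly the operator $V^{*}_{|X^{\sun}}$; uniqueness of $V$ follows likewise from abelianness of $\scSTO$.
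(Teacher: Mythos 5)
Your proof is correct and reaches all assertions of the theorem, but its execution differs from the paper's own proof in two respects. For the identification $P^{\sun}=\eta^{\sun}(V)$, the paper works with the two idempotents $V$ and $P:=(\eta^{\sun})^{-1}(P^{\sun})$ inside $\scSTO$ and plays minimality off both sides: $V(VP)=VP$ forces $VP=V$ by minimality of $V$, while $P^{\sun}(P^{\sun}V^{\sun})=P^{\sun}V^{\sun}$ forces $P^{\sun}V^{\sun}=P^{\sun}$ by minimality of $P^{\sun}$, and combining these through $\eta^{\sun}$ and its injectivity yields $V=P$. You instead derive abelianness of $\scSTOS$ from Theorem \ref{T-abelian-condition} (feeding it the scalar orbit functions, which lie in $W(\rep)$ because they are weakly continuous linear images of the relatively weakly compact orbits), conclude uniqueness of the minimal idempotent, and transport $V$ through $\eta^{\sun}$, which as a semigroup (anti-)isomorphism preserves minimal idempotents; uniqueness then does the matching in one stroke. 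Second, you prove the equalities $X_a^{\sun}=\Xsr$ and $X_0^{\sun}=X_{fl}^{\sun}$ directly --- inverting the limit operator $S$ inside the group $G=P^{\sun}\scSTOS$ on the flight side, and combining commutativity of $P^{\sun}$ with Definition \ref{defi-dual-representation}(4) on the reversible side --- whereas the paper's proof only records the inclusions coming from Proposition \ref{first-results}(5) and reduces everything to $P^{\sun*}(X)\subset X$. Your version is therefore longer but more self-contained on exactly the points the theorem asserts. The one place to tighten is the claim that $\overline{O(a')}^{w^*}\subset X_a^{\sun}$: since $P^{\sun}X^{\sun}$ need not be $w^*$-closed a priori, you should pass to a subnet with $T^{\sun}(t_{\gamma})\to R\in\scSTOS$ and write $x^{\sun}=Ra'=RP^{\sun}a'=P^{\sun}Ra'\in P^{\sun}X^{\sun}$ to close that step.
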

\begin{proof}[Proof of Theorem \ref{DG-1}] It suffices to verify that $P^{\sun*}(X)\subset X. $
By Prop. \ref{first-results}, and Prop. \ref{embed-DeGli-theory}   we find that $X=X_{ap}\oplus X_0.$ Let $V$ be the corresponding projection and $V^{\sun}:=\eta(V).$ Furthermore, let $X^{\sun}=X^{\sun}_a\oplus X^{\sun}_0,$ and let $P^{\sun}$ be the corresponding minimal idempotent. We define $P:=\eta^{-1}(P^{\sun}).$ Then,
\begin{eqnarray*}
<x,V^{\sun}V^{\sun}x^{\sun}>&=&<Vx,V^{\sun}x^{\sun}>=<V\circ V x,x^{\sun}> \\
&=&<Vx,x^{\sun}>=<x,V^{\sun}x^{\sun}>,
\end{eqnarray*}
and for $P$, we have
\begin{eqnarray*}
<(P\circ P)x,x^{\sun}>&=& <Px,P^{\sun}x^{\sun}>=<x,P^{\sun}P^{\sun}x^{\sun}>\\
&=&<x,P^{\sun}x^{\sun}>=<Px,x^{\sun}>.
\end{eqnarray*}
Hence, $P$ and $V^{\sun}$ are idempotents in $\scSTO$ and $\scSTOS$.

By Theorem \ref{right_topo_semigroups}, we have that $V$ is minimal using the fact that $\scSTO$ is an (Abelian) group on $X_{ap}=VX$ and $P^{\sun}$ is a minimally chosen idempotent.
Moreover, given that $\scSTO$ is Abelian, we find that $VP$ is an idempotent with $V(VP)=VP$; hence, $VP=V.$
Similarly, we obtain from $P^{\sun}(P^{\sun}V^{\sun})=P^{\sun}V^{\sun};$ hence, $P^{\sun}=P^{\sun}V^{\sun}$ because of its minimality.
This result leads to
\begin{eqnarray*}
<x,\eta(V)x^{\sun}>&=&<x,\eta(V\circ P)x^{\sun}>=<x,P^{\sun}V^{\sun}x^{\sun}>\\
&=&<x,P^{\sun}x^{\sun}> =<x,\eta(P)x^{\sun}>.
\end{eqnarray*}
In the first line, the $V$ left minimal is used, and in the
second, the $P^{\sun}$ left minimal is used.
Because $\eta$ is injective, we have that $V=P$ and $\scSTO(X)\subset X$ by the Eberlein-weak almost periodicity; we thus conclude that $P(X)=V(X)\subset X.$
\end{proof}

It is always a question when an orbit that is a continuous image of $\re$ or $\rep$ is metric or at least separable. In this section, we give some results, which are split into the general case of sun-dual $C_0-$semigroups and the special case of the translation semigroup on bounded uniformly continuous functions.
\subsection{Separability and metrizability of the orbits of general bounded $C_0-$semigroups}
Throughout this section we discussing $C_0-$semigroups, therefore $S$ is assumed to be either $\rep,$ or $\re,$ if a $C_0-$group is in discussion.

\begin{cor} \label{metric-orbits-T-sun-of-t}
Let $\semig{T}{\rep}$ be a bounded $C_0-$semigroup, $x^{\sun}\in X^{\sun},$ and $\overline{\bk{T^{\sun}(t)x^{\sun}}_{t\in\rep}}^{w^*}$ be norm-separable in $X^{\sun}.$ Then, $\fk{\overline{\bk{T^{\sun}(t)x^{\sun}}_{t\in\rep}}^{w^*},w^*}$ is compact metrizable.
\end{cor}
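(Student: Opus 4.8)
The plan is to show that the $w^*$-compact, norm-separable set $K:=\overline{\bk{T^{\sun}(t)x^{\sun}}_{t\in\rep}}^{w^*}\subset X^{\sun}\subset X^*$ falls directly under the scope of Lemma \ref{general-metric-in-X-star}. Indeed, the hypotheses of that lemma are precisely that $K$ is a $w^*$-compact and norm-separable subset of $X^*$; both are given here ($K$ is $w^*$-compact by construction, being the $w^*$-closure of a bounded orbit, and norm-separable by assumption). So the first and essentially only step is to invoke Lemma \ref{general-metric-in-X-star} to conclude that $(K,w^*)$ is compact metrizable, with the explicit metric
$$
d(u,v)=\sum_{n=1}^{\infty}\frac{\btr{<z_n,u-v>}}{1+\btr{<z_n,u-v>}}
$$
for a suitable sequence $\seq{z}{n}\subset X$ extracted in the proof of that lemma.

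The only point needing a word of justification is that $K$ is genuinely $w^*$-compact as a subset of $X^*$: since $\semig{T^{\sun}}{\rep}$ is a bounded $C_0$-semigroup, the orbit $\bk{T^{\sun}(t)x^{\sun}:t\in\rep}$ is norm-bounded in $X^*$, hence its $w^*$-closure is contained in a multiple of $B_{X^*}$, which is $w^*$-compact by Alaoglu; thus $K$ is $w^*$-closed inside a $w^*$-compact set and therefore $w^*$-compact. (Alternatively one may observe that $K=\overline{O(x^{\sun})}^{w^*}$ sits inside $X^{\sun}$ by the Example following Prop. \ref{first-results}, so it is the relevant orbit closure appearing in the dual-representation setting; but for the metrizability statement only the ambient $X^*$ matters.)

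Having $K$ norm-separable and $w^*$-compact, Lemma \ref{general-metric-in-X-star} applies verbatim and yields the conclusion. I do not anticipate any real obstacle: this corollary is a direct specialization of the main metrization lemma to the orbit-closure of a single sun-dual semigroup trajectory, and the proof is a one-line appeal to Lemma \ref{general-metric-in-X-star} once the trivial $w^*$-compactness of $K$ via Alaoglu has been noted. The substantive work — building the countable separating family of seminorms on $K$ weaker than the $w^*$-topology, which forces metrizability of the compact space $(K,w^*)$ — has already been carried out in that lemma.
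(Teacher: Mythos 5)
Your proposal is correct and coincides with the paper's own argument, which likewise consists of a single appeal to Lemma \ref{general-metric-in-X-star} applied to the $w^*$-compact, norm-separable orbit closure. The extra remark on $w^*$-compactness via boundedness and Alaoglu is a harmless elaboration of a step the paper leaves implicit.
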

\begin{proof}
Apply Lemma \ref{general-metric-in-X-star} to a compact and separable $\overline{\bk{T^{\sun}(t)x^{\sun}}_{t\in\rep}}^{w^*}\subset X^{\sun}$.
\end{proof}

\begin{cor} \label{metric-orbits-T-sun--restricted-of-t}
Let $\semig{T}{\rep}$ be a bounded $C_0-$semigroup, $Y\subset X^{\sun},$ $y\in Y,$ and $\bk{T^{\sun}(t)_{|Y}}_{t\in\rep}$ a dual representation with a representation compatible topology $\tau.$ If $\overline{\bk{T^{\sun}(t)y}_{t\in\rep}}^{\tau}$ is $\tau-$compact and norm-separable in $Y$, then $\fk{\overline{\bk{T^{\sun}(t)y}_{t\in\rep}}^{\tau},\tau}$ is compact metrizable.
\end{cor}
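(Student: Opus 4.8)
The plan is to transport the statement, via the representation-compatibility of $\tau$, to the weak-star setting and then quote Lemma~\ref{general-metric-in-X-star}, in complete analogy with Corollary~\ref{metric-orbits-T-sun-of-t}. Write $O(y)=\bk{T^{\sun}(t)y:t\in\rep}$ and $C:=\overline{ac\,O(y)}^{w^*}$. Since the semigroup is bounded, $O(y)$ and $ac\,O(y)$ are bounded, so $C$ is a bounded $w^*$-closed subset of the dual space $X^{*}$ and hence $w^*$-compact. Because $\tau$ is representation-compatible, the identity map $(C,w^*)\to(C,\tau)$ is a homeomorphism, so $w^*$ and $\tau$ induce one and the same topology on $C$; in particular $(C,\tau)$ is compact, hence $C$ is $\tau$-closed, and since $O(y)\subset C$ we obtain $\overline{O(y)}^{\tau}\subset C$. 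Putting $K:=\overline{O(y)}^{\tau}$, the two topologies $\tau$ and $w^*$ therefore restrict to the very same topology on $K$.

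Next I would check that $K$, regarded inside $X^{*}$, satisfies the hypotheses of Lemma~\ref{general-metric-in-X-star}: we have $K\subset Y\subset X^{\sun}\subset X^{*}$, with the $w^*$-topology of $Y$ the restriction of $\sigma(X^{*},X)$ and the norm of $Y$ the restriction of the norm of $X^{*}$; by hypothesis $K$ is $\tau$-compact, hence $w^*$-compact since the two topologies agree on $K$; and by hypothesis $K$ is norm-separable. Lemma~\ref{general-metric-in-X-star} then provides a sequence $\seq{z}{n}\subset X$ such that the metric $d(x,y)=\sum_{n=1}^{\infty}\frac{\btr{<z_n,x-y>}}{1+\btr{<z_n,x-y>}}$ induces the $w^*$-topology on $K$, so $(K,w^*)$ is compact metrizable.

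To conclude, since $\tau|_K=w^*|_K$, compact metrizability of $(K,w^*)$ is exactly compact metrizability of $\fk{\overline{\bk{T^{\sun}(t)y}_{t\in\rep}}^{\tau},\tau}$. There is no genuine obstacle here; the single point that needs care is the first paragraph, namely the identification $\overline{O(y)}^{\tau}=\overline{O(y)}^{w^*}$ together with the matching of the induced subspace topologies. This rests on the definition of a representation-compatible topology (that the identity is a $w^*$-to-$\tau$ homeomorphism on $\overline{ac\,O(y)}^{w^*}$) and on the Hausdorffness of $\tau$, which turns the $\tau$-compact set $C$ into a $\tau$-closed one; everything after that is a verbatim application of Lemma~\ref{general-metric-in-X-star}.
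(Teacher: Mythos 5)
Your argument is correct and follows essentially the same route as the paper: the representation-compatibility of $\tau$ identifies $\tau$ with the $w^*$-topology on the ($\tau$-compact, hence closed) orbit closure, after which Lemma~\ref{general-metric-in-X-star} applies verbatim to the $w^*$-compact, norm-separable set. The paper's own proof is just a compressed version of your first paragraph, with the appeal to the metrizability lemma left implicit.
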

\begin{proof}
Due to the $\tau-$compactness of $\overline{\bk{T^{\sun}(t)y}_{t\in\rep}}^{\tau},$ recalling the definition of the dual semigroup representation, the $w^*$-topology coincides with the stronger topology $\tau.$
\end{proof}

Next, we apply the previous result to obtain a general metrizability criterion for bounded $C_0-$semigroups on a Banach space $X.$ Let $j_X:X\to X^{\sun\sun}$ be the canonical embedding.

\begin{cor} \label{metric-orbit-T-of t}
Let $\semig{T}{\rep}$ be a bounded $C_0-$semigroup, $x\in X,$ and
$\overline{\bk{T^{\sun\sun}(t)j_Xx}_{t\in\rep}}^{\sigma(X^{\sun\sun},X^{\sun})}$ be separable in $X^{\sun\sun}.$ Then,
$\fk{\overline{\bk{T(t)x}_{t\in\rep}}^{\sigma(X,X^{\sun})},\sigma(X,X^{\sun})}$ is metrizable.
\end{cor}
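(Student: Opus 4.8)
The plan is to reduce this corollary to Corollary \ref{metric-orbits-T-sun-of-t}, applied not to $X$ itself but to the dual representation of the semigroup on $j_X X \subset X^{\sun\sun}$. First I would recall from Proposition \ref{embed-DeGli-theory} (or more directly from the construction of the sun-dual) that $T^{\sun\sun}(t) j_X x = j_X T(t) x$, so that the orbit $\bk{T^{\sun\sun}(t) j_X x : t \in \rep}$ is exactly the isometric image under $j_X$ of the orbit $\bk{T(t)x : t\in\rep}$. Since $j_X$ is an isometry, the separability hypothesis on $\overline{\bk{T^{\sun\sun}(t)j_Xx}_{t\in\rep}}^{\sigma(X^{\sun\sun},X^{\sun})}$ transfers back to norm separability of the set $\bk{T(t)x:t\in\rep}$ in $X$, and hence of its $\sigma(X,X^{\sun})$-closure, which lies inside the separable norm-closed span.

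Next I would observe that $j_X X$ is a norming subspace of $X^{\sun\sun}$ with respect to $X^{\sun}$ in the sense needed, so that $(X^{\sun},(X^{\sun})^* \supset j_X X)$ plays the role of the pair $(X, Y)$ in the general theory, and the semigroup $\bk{T^{\sun\sun}(t)_{|j_XX}}_{t\in\rep}$ is a dual representation on $Y = j_X X \subset (X^{\sun})^*$. Here the relevant "$w^*$"-topology on $j_X X$ is $\sigma(j_X X, X^{\sun})$, which corresponds precisely to $\sigma(X, X^{\sun})$ on $X$. The point is that $\overline{\bk{T^{\sun\sun}(t)j_Xx}}^{w^*}\subset j_X X$ by the $C_0$-semigroup property (the closure stays inside $j_X X$ because $j_X X \subset X^{\sun\sun}$ is exactly the part on which $T^{\sun\sun}$ restricts to $j_X T(t)$), so Definition \ref{defi-dual-representation}(3) holds and the machinery of Lemma \ref{general-metric-in-X-star} — or rather its already-packaged form in Corollary \ref{metric-orbits-T-sun-of-t} applied with $X^{\sun}$ in the role of the predual Banach space — applies to the $w^*$-compact, norm-separable set $\overline{\bk{T^{\sun\sun}(t)j_Xx}}^{\sigma(X^{\sun\sun},X^{\sun})}$.

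Putting these together: by Lemma \ref{general-metric-in-X-star} with the Banach space $X^{\sun}$ and the set $K = \overline{\bk{T^{\sun\sun}(t)j_Xx}_{t\in\rep}}^{\sigma(X^{\sun\sun},X^{\sun})}$, the space $(K, \sigma(X^{\sun\sun},X^{\sun}))$ is compact metrizable, with metric induced by a sequence $\seq{z}{n}\subset X^{\sun}$. Pulling back along the $\sigma(X,X^{\sun})$-to-$\sigma(X^{\sun\sun},X^{\sun})$ homeomorphism $j_X$, the set $\overline{\bk{T(t)x}_{t\in\rep}}^{\sigma(X,X^{\sun})}$ is metrizable (indeed compact metrizable, though the statement only asks for metrizable), with the metric $d(u,v) = \sum_{n} \btr{<z_n,u-v>}/(1+\btr{<z_n,u-v>})$ for $z_n \in X^{\sun}$, and the corollary follows.

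The main obstacle I anticipate is purely bookkeeping rather than conceptual: one must be careful that the orbit-closure $\overline{\bk{T^{\sun\sun}(t)j_Xx}}^{w^*}$ genuinely sits inside $j_X X$ and not merely inside $X^{\sun\sun}$, so that the transfer via $j_X$ is a bijection onto the $\sigma(X,X^{\sun})$-closure of the orbit. This is where the $C_0$-property is essential — it is exactly the content that makes $\semig{T^{\sun\sun}_{|j_XX}}{\rep}$ a legitimate dual representation in the sense of Definition \ref{defi-dual-representation}, which is implicit in the setup of this subsection. Once that identification is in hand, the rest is a direct citation of Lemma \ref{general-metric-in-X-star} (equivalently Corollary \ref{metric-orbits-T-sun-of-t}) together with the functoriality of weak topologies under the isometric embedding $j_X$.
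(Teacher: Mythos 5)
Your proposal takes essentially the same route as the paper: contain the orbit (via $j_X$) in the $w^*$-compact, norm-separable set $K=\overline{\bk{T^{\sun\sun}(t)j_Xx}_{t\in\rep}}^{\sigma(X^{\sun\sun},X^{\sun})}$, apply Lemma \ref{general-metric-in-X-star} (packaged as Corollary \ref{metric-orbits-T-sun-of-t}) with predual $X^{\sun}$ to get a metric from a sequence in $X^{\sun}$, and pull back along the homeomorphism $j_X$. One caveat: your intermediate claim that $\overline{\bk{T^{\sun\sun}(t)j_Xx}}^{w^*}\subset j_XX$ ``by the $C_0$-property'' is not justified --- $w^*$-limits of points of $j_XX$ can leave $j_XX$ unless the orbit is weakly relatively compact (cf.\ Proposition \ref{embed-DeGli-theory}) --- and the same goes for your parenthetical ``indeed compact metrizable.'' Fortunately neither is needed: metrizability is inherited by arbitrary subsets of $K$, which is all the paper uses, so the conclusion stands.
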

\begin{proof}
Use the fact that $\bk{j_XT(t)x}_{t\in\rep}\subset \overline{\bk{T^{\sun\sun}(t)j_Xx}_{t\in\rep}}^{\sigma(X^{\sun\sun},X^{\sun}},$ and apply Lemma \ref{metric-orbits-T-sun-of-t}.
\end{proof}
In view of asymptotics, we define the classes of vectors below.

\begin{defi}
Let $M \subset X^*$ be a closed and separating space.
\begin{enumerate}
\item A vector $x\in X$ is called $M-$weakly almost periodic, if for every $x \in M,$ there is a $g \in AP(\re)$ such that $\bk{t\mapsto x^*T(t)x}=g_{|\rep}(t).$ A $C_0-$semigroup
$\semig{T}{\re}$ is called $M-$weakly almost periodic on $X$ if every vector in $X$ is an $M-$weakly almost periodic vector.
\item  A vector $x\in X$ is called $M-$weakly E.-wap (M-w-Ewap) if for every $x^* \in M,$ the mapping $\bk{t\mapsto x^*T(t)x}\in W(\rep.)$. A $C_0-$semigroup
$\semig{T}{\re}$ is called $M-$weakly E.-wap (M-w-Ewap) on $X$ if every vector in $x$ is M-w-Ewap.
\end{enumerate}
\end{defi}

\begin{lem} \label{X-star-Ewap Orbit-metric}
If $\bk{T(t)}_{t\ge 0}$ is a bounded $C_0-$semigroup and $x\in X$ is a $X^*$-w-Ewap vector, then $(\overline{\bk{T(t)x}_{t\ge 0}}^{w},w)$ is metrizable.
\end{lem}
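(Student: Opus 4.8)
The goal is to show that the weak closure of the orbit $\overline{\{T(t)x:t\ge0\}}^{w}$, equipped with the weak topology, is metrizable. The natural route is to produce a \emph{countable} set of functionals in $X^*$ that separates points on this closure, and then invoke the standard fact (the weak-topology analogue of Lemma \ref{general-metric-in-X-star}) that a weakly compact set on which a countable family of functionals separates points is metrizable in the weak topology. So the plan has two parts: first establish that the orbit closure is weakly compact, and second extract the countable separating family from the hypothesis that $x$ is $X^*$-w-Ewap.

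\textbf{Step 1: weak compactness of the orbit closure.} By definition, $x$ being $X^*$-w-Ewap means $\{t\mapsto x^*T(t)x\}\in W(\rep)$ for every $x^*\in X^*$; that is, each scalar orbit function is Eberlein-weakly almost periodic. I would argue that this forces $O(x)=\{T(t)x:t\ge0\}$ to be weakly relatively compact in $X$ (this is essentially the classical Eberlein--Grothendieck characterization: a bounded set is weakly relatively compact iff it satisfies the double-limit/iterated-limit condition, which is exactly what E.-wap of all the scalar functions $x^*T(\cdot)x$ delivers, via Proposition \ref{interchanged1} applied coordinatewise). Hence $K:=\overline{O(x)}^{w}$ is weakly compact.

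\textbf{Step 2: countable separating family.} Here is where the real work lies. For each fixed $x^*\in X^*$, the function $g_{x^*}(t)=x^*T(t)x$ lies in $W(\rep)$, hence in $C_b(\rep)$. The values $\{g_{x^*}(t):t\ge0\}=\{x^*(T(t)x):t\ge0\}$ together with their limits describe how $x^*$ acts on $K$. The idea is to use separability features of $W(\rep)$ or of the relevant compactification to reduce to countably many $x^*$. More concretely: cover $K$ (weakly compact, hence bounded) and use that $K$ sits inside a bounded set; the restriction map $X^*\to C(K,w)$, $x^*\mapsto x^*|_K$, has image lying in $C(K)$, and I want its image to be ``small'' enough. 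I would try to show that $\{x^*|_K : x^*\in B_{X^*}\}$ is pointwise-separable in $C(K)$ — this is plausible because each coordinate function, being E.-wap, has a weakly compact orbit under translation, which constrains the oscillation; then Lemma \ref{separability-lemma} (Namioka's lemma) upgrades pointwise separability to norm separability of a countable subfamily, and a norm-dense countable family of functionals on $K$ certainly separates points of $K$. Applying Lemma \ref{general-metric-in-X-star}'s weak-topology analogue (or reproving it: the metric $d(y,z)=\sum 2^{-n}\frac{|x_n^*(y-z)|}{1+|x_n^*(y-z)|}$ for the countable separating sequence $\{x_n^*\}$ metrizes the weak topology on the weakly compact $K$) finishes the proof.

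\textbf{Main obstacle.} The delicate point is Step 2: showing that countably many functionals suffice. Weak compactness of $K$ alone does \emph{not} give metrizability (e.g.\ $B_{X^{**}}$ weak-star is not metrizable for nonseparable $X^*$; analogously here), so the E.-wap hypothesis must genuinely be used to control the ``size'' of the family $\{x^*|_K\}$. I expect the cleanest path is via Grothendieck's result that $W(\rep)$-type function spaces interact well with weak compactness, combined with Namioka's separability lemma — the E.-wap condition on \emph{every} scalar orbit function is precisely the structural input that lets one pass from the (possibly nonseparable) action of all of $X^*$ to a countable separating subset. If a direct argument is awkward, an alternative is to map into the compactification $\scST$ and use that the orbit, being E.-wap, gives a weakly compact orbit in a suitable $C_b$-space, then pull back separability; but I would first attempt the direct route above.
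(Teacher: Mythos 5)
Your overall architecture is right --- establish weak compactness of $K:=\overline{\{T(t)x:t\ge0\}}^{w}$ from the double-limit criterion, then metrize the weak topology on $K$ by a countable separating family of functionals via the metric of Lemma \ref{general-metric-in-X-star} --- and your Step 1 agrees with what the paper does (its one-line proof routes through Cor.~\ref{metric-orbit-T-of t}: weak compactness puts $j_XK$ inside $j_XX$ as a $w^*$-compact set in the bidual, to which the separability/metrizability machinery applies). The genuine gap is in your Step 2, at exactly the point you flag as ``where the real work lies'': the pointwise separability of $\{x^*|_K : x^*\in B_{X^*}\}$ in $C(K)$ is not justified by the heuristic you offer. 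E.-wapness of each scalar function $t\mapsto x^*T(t)x$ constrains each individual restriction $x^*|_K$, but says nothing about how many essentially distinct restrictions there are; pointwise separability of that family is equivalent to $w^*$-separability of the dual ball of $\overline{\mathrm{span}}\,K$, which holds precisely when $K$ is norm separable --- a fact you never establish, and which your Namioka detour therefore presupposes rather than delivers.

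The missing ingredient is elementary and makes the Namioka step unnecessary: $t\mapsto T(t)x$ is norm continuous, so the orbit $O(x)$ is norm separable; since $O(x)$ is weakly relatively compact, Eberlein--\v{S}mulian makes $K$ the weak \emph{sequential} closure of $O(x)$, and Mazur's theorem places every weak sequential limit in $\overline{co}^{\|\cdot\|}O(x)$, so $K$ is norm separable. A countable $w^*$-dense subset of the dual ball of the separable space $\overline{\mathrm{span}}\,O(x)$, extended by Hahn--Banach, then separates points of $K$, and your metric formula finishes exactly as in Lemma \ref{general-metric-in-X-star}. Note also that you have the roles of the hypotheses slightly inverted: the countable separating family comes for free from strong continuity plus weak compactness, whereas the $X^*$-w-Ewap hypothesis is used only for your Step 1 (and, implicitly, to guarantee that the closure taken in the bidual lands back in $j_XX$); it is not, as you suggest, ``the structural input that lets one pass to a countable separating subset.''
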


\begin{proof}
Apply the fact that for $K$ weakly compact, $j_XK\subset j_XX,$ and apply Cor. \ref{metric-orbit-T-of t}.
\end{proof}

Recalling Lybich, Yu.I. and Kadets \cite[Thm.2]{KadetsStrongWeak},
they propose conditions that Banach spaces have to fulfill, which state that a weakly almost periodic semigroup ${t\mapsto x^*(T(t)x)}\in AP(\re)$ for all $x^*\in X^*$ must be almost periodic.

\begin{theo}\cite[Thm. 2]{KadetsStrongWeak}
Let a Banach space $X$ have the following property: the weak* sequential closure of each of its separable subspaces Y in the second conjugate space $Y^{**}$ is separable. Then, each $X^*-$weakly almost periodic group acting on $X$ is almost periodic.
\end{theo}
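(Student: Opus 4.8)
The plan is to reduce the statement to the separability criterion already developed in the paper, namely Theorem \ref{separable-theorem} (and its corollaries), which asserts that for a vector $x^{\sun}\in X_a^{\sun}$ with $\overline{\semig{T^{\sun}}{\re}}$ Abelian, norm separability of $\overline{O(x^{\sun})}^{\sigma(X^{\sun},X)}$ is equivalent to relative norm compactness of the orbit. So the skeleton is: start from an $X^*$-weakly almost periodic group $\semig{T}{\re}$ on $X$; first observe that each scalar orbit $t\mapsto x^*(T(t)x)$ being in $AP(\re)$ makes it in particular E.-wap, so by Proposition \ref{embed-DeGli-theory} the bidual representation $\semig{T^{\sun\sun}_{|j_XX}}{\re}$ (or the sun-dual picture) is a dual representation, and $X=X_{ap}\oplus X_0$ by Theorem \ref{DG-1}. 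Since almost periodicity of every scalar orbit forces (via Proposition \ref{weak-almost-periodicity-implies-X-a}, applied with the separating set $X^*$) that every vector lies in $X_a$, we get $X_0=0$, i.e. $X=X_{ap}$ and the whole group has relatively weakly compact orbits that are in fact "reversible".

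Next I would bring in the hypothesis on $X$. Fix $x\in X$; the orbit closure $K:=\overline{O(x)}^{w}$ is weakly compact (E.-wap), and $j_XK\subset j_XX\subset X^{\sun\sun}$. We want to show $K$ is norm separable, because then Lemma \ref{X-star-Ewap Orbit-metric} / Corollary \ref{metric-orbit-T-of t} give metrizability and, combined with Theorem \ref{separable-theorem}, relative norm compactness of $O(x)$ — which is exactly the definition of $\semig{T}{\re}$ being almost periodic on $X$. To get norm separability: let $Y=\overline{\mathrm{span}}\,O(x)$, a separable subspace of $X$. The hypothesis says the weak* sequential closure of $Y$ inside $Y^{**}$ is separable. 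The key point is that the weak*-orbit closure $\overline{O(j_Xx)}^{\sigma(X^{**},X^*)}$, when intersected appropriately, sits inside this weak* sequential closure: because the scalar functions $t\mapsto x^*(T(t)x)$ are almost periodic, each point of the weak* orbit closure is a weak* \emph{sequential} limit of a sequence from $O(j_Xx)\subset j_XY$ (almost periodicity gives enough structure — Bochner/double-limit — to extract sequences rather than nets), hence lies in the weak* sequential closure of $j_XY$, which is norm separable by hypothesis. Therefore $\overline{O(j_Xx)}^{\sigma(X^{**},X^*)}$ is norm separable, so $j_XK$ is, so $K$ is.

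From there the chain closes: $K$ norm separable and weakly compact $\Rightarrow$ by Theorem \ref{separable-theorem} (using $\semig{T^{\sun}}{\re}$ Abelian, which follows here because every scalar orbit being in $AP(\re)$ yields the double-limit/commutation condition of Theorem \ref{T-abelian-condition} for the separating set $X^*$) the orbit $O(x)$ is relatively norm compact. Since $x\in X$ was arbitrary, $\semig{T}{\re}$ is almost periodic on $X$.

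\textbf{Main obstacle.} The delicate step is the passage from nets to sequences: showing that points of the weak*-closure of the orbit are genuinely weak* \emph{sequential} limits from $j_XY$, so that the hypothesis (which is about sequential closures) applies. This is where almost periodicity of the individual scalar orbits must be used in an essential way — one needs that from any net in $\re$ one can extract a \emph{sequence} along which $T(t)x$ converges weakly, which rests on the Bohr/Bochner characterization of almost periodicity (uniform-continuity plus relative compactness of translates in $C_b(\re)$ for each coordinate) together with a separability argument on $Y$ to diagonalize over countably many functionals. One must be careful that the countably many functionals suffice to determine the limit, i.e. that a countable norming set for $Y$ (coming from separability of $Y$) controls weak convergence of the bounded net $T(t)x$; boundedness of the group plus separability of $Y$ makes the weak topology on bounded subsets of $K$ metrizable in the relevant sense, which legitimizes the sequential extraction. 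Handling this carefully — and citing \cite[Thm.~2]{KadetsStrongWeak}'s hypothesis exactly at the point where sequential weak* closure is invoked — is the crux; the rest is assembly of results already in the paper.
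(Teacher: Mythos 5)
Your overall skeleton coincides with the paper's: pass to the bidual, place $j_Xx$ in the reversible part, reduce to the separability criterion of Theorem \ref{separable-X_uds}/\ref{separable-theorem}, and feed the hypothesis on sequential $w^*$-closures of separable subspaces into the norm-separability requirement. But two of your steps are genuine gaps. First, you begin by asserting that scalar almost periodicity makes each orbit $O(x)$ weakly relatively compact, so that $K=\overline{O(x)}^{w}$ is weakly compact and Proposition \ref{embed-DeGli-theory} and Theorem \ref{DG-1} apply. That is unjustified: almost periodicity of every function $t\mapsto \langle x^*,T(t)x\rangle$ gives boundedness of the orbit but not weak relative compactness in $X$ --- if it did, much of the difficulty of Kadets' theorem would evaporate. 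The paper avoids this by working with $\semig{T^{\sun\sun}}{\re}$ on $X^{\sun\sun}$, where the bounded orbit has a $w^*$-compact closure for free, and by using Proposition \ref{weak-almost-periodicity-implies-X-a} (with the separating set $X^*$) to conclude $j_Xx\in X^{\sun\sun}_a$.

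Second, the step you yourself flag as the crux --- that every point of $\overline{O(j_Xx)}^{\sigma(X^{**},X^*)}$ is a $w^*$-\emph{sequential} limit from $j_XY$ --- is not established by your sketch. A countable norming set for the separable subspace $Y$ induces on bounded sets a metrizable topology that is strictly weaker than $\sigma(X^{**},X^*)$ unless $X^*$ itself is separable; convergence along those countably many functionals does not identify the $w^*$-limit of the net $j_XT(t_{\alpha})x$, so the proposed diagonal extraction does not produce a $w^*$-convergent sequence with the same limit. (Bounded sets in a bidual need not have their $w^*$-closures exhausted by sequential limits; this is exactly why the theorem carries a hypothesis on $X$.) The paper's route through this point is Corollary \ref{metric-orbits-T-sun-of-t}, resting on Lemma \ref{general-metric-in-X-star}: once the $w^*$-compact orbit closure is known to be norm separable, it is $w^*$-metrizable, so closure and sequential closure coincide and the hypothesis applied to $Z=\overline{span}\bk{T(\re)x}$ finishes the argument. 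You would need to supply that metrizability step (or otherwise show the full $w^*$-closure of the orbit lies inside the norm-separable set $\overline{j_XY}^{seq-w^*}$ provided by the hypothesis); almost periodicity of the individual scalar orbits does not by itself yield sequential extraction in the $w^*$-topology of $X^{**}$.
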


\begin{proof}
Consider the dual semigroup $\bk{T^{\sun\sun}(t)}_{t\ge 0}$ with $jT(t)x=T^{\sun\sun}(t)jx,$ which is an extension to a dual semigroup. By Prop. \ref{weak-almost-periodicity-implies-X-a}, $jx\in X^{\sun\sun}_a,$ and we are in the situation of Theorem 3.12. It remains to verify that the $w^*-$closure of the orbit is sequentially separable in $X^{\sun\sun}.$ Lemma \ref{metric-orbits-T-sun-of-t} implies that the closure and the sequential closure of the orbit coincide. The proof concludes using the assumption that $\overline{O(x)}^{w^*}=\overline{O(x)}^{seq-w^*}\subset \overline{jZ}^{seq-w^*}$ is separable when $Z$ is separable. Choose $Z=\overline{span}\bk{T(\re)x}.$
\end{proof}

\begin{cor}
Let $X$ be a Banach space and $\semig{T}{\rep}$ be a bounded $C_0-$semigroup. If $X^{\sun}$ is separable, then for every
$X-$weakly almost periodic vector $x^{\sun},$ the function  $\bk{t\mapsto T^{\sun}(t)x^{\sun}}=g_{|\rep}(t),$ with $g\in AP(\re,X^{\sun}).$
\end{cor}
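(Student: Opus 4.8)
The plan is to reduce the claim to Theorem~\ref{separable-theorem} applied to the sun-dual semigroup $\semig{T^{\sun}}{\rep}$, after first bringing the hypotheses into the form required there. The starting point is that an $X$-weakly almost periodic vector $x^{\sun}\in X^{\sun}$ satisfies $\bk{t\mapsto <x,T^{\sun}(t)x^{\sun}>}\in AP(\re)_{|\rep}$ for all $x\in X$; since $X$ separates $X^{\sun}$, Proposition~\ref{weak-almost-periodicity-implies-X-a}(2) yields $x^{\sun}\in X_a^{\sun}$. In particular, $AP(\re)\subset W(\re)$, so the weak-almost-periodicity hypothesis $\bk{S\ni t\mapsto <T^{\sun}(t)x^{\sun},x>}\in W(\rep)$ needed to invoke the harmonic-analysis machinery is automatically satisfied for every such $x^{\sun}$.

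First I would observe that under the separability hypothesis on $X^{\sun}$, the whole of $X_a^{\sun}$ consists of vectors with norm-separable orbit closures: indeed $\overline{O(x^{\sun})}^{\sigma(X^{\sun},X)}$ is a subset of the norm-separable space $X^{\sun}$, hence is itself norm separable. Second, I would need the compactification $\overline{\semig{T^{\sun}}{\rep}}^{w^*}$ to be Abelian, which is the hypothesis of Theorem~\ref{separable-theorem}. This follows from Theorem~\ref{T-abelian-condition}: since the scalar functions $\bk{t\mapsto <x,T^{\sun}(t)x^{\sun}>}$ lie in $AP(\re)_{|\rep}\subset W(\rep)$ for a separating set of $x\in X$ (in fact all of $X$), the operators of the compactification commute on each $x^{\sun}\in X^{\sun}$, so $\overline{\semig{T^{\sun}}{\rep}}^{w^*}$ is Abelian. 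With these two facts in hand, condition~(1) of Theorem~\ref{separable-theorem} holds, and that theorem gives $x^{\sun}\in X_{uds}^{\sun}$ and that $O(x^{\sun})$ is relatively norm compact.

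Finally I would translate relative norm compactness of the orbit into the desired statement $\bk{t\mapsto T^{\sun}(t)x^{\sun}}=g_{|\rep}$ with $g\in AP(\re,X^{\sun})$. The function $h(t):=T^{\sun}(t)x^{\sun}$ is norm continuous on $\rep$ (because $x^{\sun}\in X^{\sun}$, i.e.\ $\nrm{\cdot}-\lim_{t\to 0}T^{\sun}(t)x^{\sun}=x^{\sun}$, combined with the semigroup law and boundedness) and bounded, and its orbit (range) is relatively norm compact; by the classical Bochner characterization of vector-valued almost periodic functions, a bounded uniformly continuous function on $\rep$ with relatively compact range extends to an almost periodic function on $\re$ precisely when it is asymptotically almost periodic, and for a bounded $C_0$-semigroup orbit the semigroup structure forces the function to be almost periodic on $\rep$ (one can see this directly from $x^{\sun}\in X_{uds}^{\sun}$, which exhibits $x^{\sun}$ as a norm limit of finite linear combinations of unimodular eigenvectors, whose orbits $t\mapsto \la(T(t))x_i^{\sun}$ are trigonometric monomials and hence restrictions of almost periodic functions on $\re$; the uniform limit of such is again of that form). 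This produces the required $g\in AP(\re,X^{\sun})$.

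The main obstacle I anticipate is the clean bookkeeping at the reduction step: one must make sure that the $X$-weak almost periodicity of $x^{\sun}$ really does supply both the $W(\rep)$-hypothesis (for Theorem~\ref{T-abelian-condition} to give the Abelian structure) and membership in $X_a^{\sun}$ (via Proposition~\ref{weak-almost-periodicity-implies-X-a}), and that the orbit in question is taken in $X^{\sun}$ rather than in $X^{\sun\sun}$ or $X^*$, so that the separability of $X^{\sun}$ applies verbatim. Once the hypotheses of Theorem~\ref{separable-theorem} are verified, the remaining passage from $X_{uds}^{\sun}$-membership to an honest $AP(\re,X^{\sun})$-function is essentially the classical almost-periodicity argument and carries no real difficulty.
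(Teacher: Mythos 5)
Your overall route is the one the paper intends for this (unproved) corollary: Proposition \ref{weak-almost-periodicity-implies-X-a}(2) places $x^{\sun}$ in $X_a^{\sun}$; separability of $X^{\sun}$ makes the $\sigma(X^{\sun},X)$-closure of the orbit norm separable; Theorem \ref{separable-theorem} (equivalently Theorem \ref{separable-X_uds}) then yields $x^{\sun}\in X_{uds}^{\sun}$ and relative norm compactness of the orbit; and the expansion of $x^{\sun}$ into unimodular eigenvectors, whose orbit maps are $t\mapsto e^{i\theta t}x_i^{\sun}$ because a continuous unimodular multiplicative functional on $\semig{T^{\sun}}{\rep}$ has this form, produces the extension $g\in AP(\re,X^{\sun})$. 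That last step is exactly the remark the paper makes just before Theorem \ref{separable-theorem}.

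There is, however, one genuine gap: your verification of the Abelian hypothesis. You assert that, since $\bk{t\mapsto <x,T^{\sun}(t)x^{\sun}>}\in AP(\re)_{|\rep}\subset W(\rep)$ for all $x\in X$, ``the operators of the compactification commute on each $x^{\sun}\in X^{\sun}$,'' hence $\overline{\semig{T^{\sun}}{\rep}}^{w^*}$ is Abelian. Theorem \ref{T-abelian-condition} is a pointwise statement, and the corollary's hypothesis supplies the $W(\rep)$-condition only for the particular $X$-weakly almost periodic vector under consideration, not for arbitrary elements of $X^{\sun}$; separability of $X^{\sun}$ contributes nothing towards commutativity. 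So $UVy=VUy$ is only obtained for the given $y=x^{\sun}$, while Theorem \ref{separable-theorem} and the harmonic-analysis machinery behind it (the compact Abelian group $G=P\scST P$, its Haar measure and characters) require the Abelian structure of the whole restricted compactification. The repair is the standard restriction: let $Y\subset X^{\sun}$ be the closed linear span of the $X$-weakly almost periodic vectors. $Y$ is norm closed, translation invariant, and the $w^*$-closure of each orbit stays in $Y$ (pointwise limits of translates of an $AP(\re)$ function are, after passing to a subnet, uniform limits and hence again in $AP(\re)$, by Bochner's criterion), so $\semig{T^{\sun}_{|Y}}{\rep}$ is again a dual representation. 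On $Y$ the $W$-condition holds for every vector, Theorem \ref{T-abelian-condition} makes the restricted compactification Abelian, $Y$ inherits norm separability from $X^{\sun}$, and Theorem \ref{separable-X_uds} applies verbatim. With this adjustment the remainder of your argument goes through.
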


\subsection{Vector-valued functions and their orbits}
We consider the space of bounded uniformly continuous functions, which is the space of the translation semigroup or group. A special type of function is for bounded $C_0-$semigroups, $f(\cdot):=S(\cdot)x.$ This view moves behavior from the translation semigroup to general $C_0-$semigroups.

We start by verifying that the translation semigroup on the bounded uniformly continuous functions is in a sense a restriction semigroup, which is a notion introduced in a previous section. In doing so, topologies come into play. We start with the-$w^*$-compact open topology.

\begin{lem} \label{pointwise-weak-star-stronger-than-L1}
On bounded sets $A\subset BUC(\re,X^*)$, the $w^*$-compact-open topology is stronger than $\sigma(BUC(\re,X^*),L^1(\re,X)).$
\end{lem}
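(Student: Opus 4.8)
The plan is to compare the two topologies directly on a fixed bounded set $A\subset BUC(\re,X^*)$ by showing that every basic $\sigma(BUC(\re,X^*),L^1(\re,X))$-neighbourhood of a point $f_0\in A$ contains a $w^*$-compact-open neighbourhood of $f_0$ relative to $A$. A basic neighbourhood of the first kind is of the form $\{f\in A:\btr{\int_\re \dual{g(r),f(r)-f_0(r)}\,dr}<\ep\}$ for some $g\in L^1(\re,X)$ and $\ep>0$; so the whole task reduces to estimating the functional $f\mapsto\int_\re\dual{g(r),f(r)}\,dr$ by finitely many seminorms of the form $\sup_{t\in K}\btr{\dual{x,f(t)}}$ with $K\subset\re$ compact and $x\in X$.

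First I would reduce to the case where $g$ is a simple function: since $A$ is bounded, say $\nri{f}\le L$ for all $f\in A$, the map $f\mapsto\int_\re\dual{g(r),f(r)}\,dr$ is Lipschitz in $g$ with constant $L$ uniformly on $A$, so approximating $g$ in $L^1(\re,X)$ by a function $\sum_{i=1}^n \mathbf 1_{E_i}\otimes x_i$ with $E_i$ bounded Borel sets changes the functional by at most $L\|g-\sum_i \mathbf 1_{E_i}\otimes x_i\|_{L^1}$, which we can make $<\ep/2$. Next, for each $i$ I would approximate $\mathbf 1_{E_i}$ in $L^1(\re)$ by a continuous compactly supported function, or more concretely replace $\int_{E_i}\dual{x_i,f(r)}\,dr$ by a Riemann sum $\sum_j c_{ij}\dual{x_i,f(t_{ij})}$ over a fine enough partition of a compact interval containing $E_i$; here uniform continuity of the elements of $A$ (they lie in $BUC$) together with the uniform bound $L$ guarantees that a single mesh works simultaneously for all $f\in A$ up to error $\ep/(4n)$, because $\btr{\dual{x_i,f(r)-f(t_{ij})}}\le\|x_i\|\,\omega_f(\text{mesh})$ and one can even bound this using that on the $w^*$-compact-open topology we only need control on one function at a time — but to stay uniform on $A$ the cleaner route is to use that $A$ restricted to a compact interval is equicontinuous, which is not automatic, so instead I would argue pointwise: fix $f_0$, and note we only need the neighbourhood comparison at $f_0$, using $\omega_{f_0}$ and the bound $L$ for general $f\in A$ on the difference $f-f_0$ via telescoping through the Riemann sum.

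Collecting these approximations, $\btr{\int_\re\dual{g(r),f(r)-f_0(r)}\,dr}$ is bounded by $\sum_{i,j}\btr{c_{ij}}\,\btr{\dual{x_i,(f-f_0)(t_{ij})}}$ plus an error term controlled by $\ep/2$ uniformly for $f\in A$; hence the set where every $\btr{\dual{x_i,(f-f_0)(t_{ij})}}$ is small (a $w^*$-compact-open neighbourhood of $f_0$, taking $K=\{t_{ij}\}$) is contained in the given $\sigma$-neighbourhood. This shows the identity map from $(A,w^*\text{-compact-open})$ to $(A,\sigma(BUC(\re,X^*),L^1(\re,X)))$ is continuous, i.e. the compact-open topology is stronger, as claimed.

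The main obstacle I anticipate is making the Riemann-sum approximation \emph{uniform over all $f\in A$}: equicontinuity of $A$ on compact sets is not part of the hypothesis, only boundedness. I expect this is handled exactly as indicated — one does not need uniformity over $A$, only the comparison of neighbourhood \emph{bases at each fixed point} $f_0$, and for a fixed $f_0\in BUC$ the modulus of continuity $\omega_{f_0}$ is available; the contribution of $f$ enters only through the linear expression $\dual{x_i,(f-f_0)(t_{ij})}$, whose smallness is precisely what a compact-open neighbourhood of $f_0$ controls, while the boundedness of $A$ absorbs the tail of $g$ outside a large compact set. So the delicate bookkeeping is in choosing, in the right order, the compact set (to truncate $g$), then the partition mesh (using $\omega_{f_0}$ and $L$), then the finitely many points and vectors defining the compact-open neighbourhood.
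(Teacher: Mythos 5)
Your overall strategy---approximate $g$ in $L^1(\re,X)$ by a simple function $\sum_i\mathbf{1}_{E_i}\otimes x_i$ with $E_i$ bounded, and absorb the approximation error $\btr{\int\dual{(g-\vp)(r),(f-f_0)(r)}dr}\le 2L\nrm{g-\vp}_1$ using only the boundedness of $A$---is exactly the paper's first step. The gap is in how you then handle the simple-function part. You replace $\int_{E_i}\dual{x_i,(f-f_0)(r)}dr$ by a Riemann sum and need the discretization error to be small \emph{uniformly over $f\in A$}; but that error is governed by the oscillation of $\dual{x_i,(f-f_0)(\cdot)}$ on the partition intervals, i.e.\ by the modulus of continuity of the \emph{varying} $f$, not of the fixed $f_0$. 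Since $A$ is only bounded and not equicontinuous, no mesh works for all $f\in A$, and the remedy you sketch (``using $\omega_{f_0}$ and the bound $L$ on the difference'') only yields an oscillation bound of order $L\nrm{x_i}$, which does not shrink with the mesh. Controlling $\dual{x_i,(f-f_0)(t_{ij})}$ at the finitely many sample points---which is all a compact-open neighbourhood of $f_0$ gives you---says nothing about the values of $f-f_0$ between them, so the estimate claimed in your third paragraph (``plus an error term controlled by $\ep/2$ uniformly for $f\in A$'') is not justified and is in fact false for a general bounded $A$.

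The repair is to drop the discretization altogether: for $h=f-f_0$ and a bounded Borel set $E_i$ one has directly
$$
\left|\int_{E_i}\dual{x_i,h(r)}\,dr\right|\;\le\;|E_i|\cdot\sup_{t\in\overline{E_i}}\left|\dual{x_i,h(t)}\right|,
$$
and the right-hand side is $|E_i|$ times a $w^*$-compact-open seminorm of $h$, since $\overline{E_i}$ is compact and $\dual{x_i,h(\cdot)}$ is continuous. Summing over $i$ finishes the argument with no reference to any modulus of continuity. This is in substance what the paper does: it reduces the simple-function part to the scalar case via the continuous map $T_x:f\mapsto\dual{x,f}$ into $(BUC(\re),\tau_{co})$ and then uses that, on bounded sets, scalar compact-open convergence implies $\sigma(C_b(\re),L^1(\re))$-convergence, which again comes down to the sup-estimate above together with a truncation of $g$ to a compact interval.
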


\begin{proof} Let $\tau_{w^*-co}^{\mathcal{A}}$ be the bounded topology due to $weak^*$-compact-open convergence that comes with $A_n:=2^nB_{BUC(\re,X^*)}$ $\mathcal{A}:=\seq{A}{n}.$ Furthermore, let $\net{f}{\gamma}{\Gamma}\subset BUC(\jz,X^*)$ be a bounded and $\tau_{w^*-co}^{\mathcal{A}}$-convergent net with the limit $f.$
Because $g \in L^1(\jz,X),$ and $\ep >0,$ we find that $\bk{K_i}_{i=1}^n \subset \mathcal{P}(\re)$
and $\bk{x_i}_{i=1}^n \subset X,$ so that for $\vp_:=\chi_{K_i},$ we have $\nrm{ g - \sum_{i=1}^n \vp_i x_i}_1 < \ep.$ Define $\vp:=\sum_{i=1}^n \vp_i x_i.$

Then, $\bk{f_{\gamma}-f}_{\gamma\in\Gamma}$ is bounded and
$$
\btr{<f_{\gamma}-f,g>}\le\btr{<f_{\gamma}-f,\vp>}+\btr{<f_{\gamma}-f,g-\vp>}.
$$
Hence, it remains to prove convergence on $L^1(\jz)\otimes_{\pi} X$. However, for $x\in X,$
$$
\Funk{T_x}{(BUC(\jz,X^*),\tau_{w^*-co}^{\mathcal{A}})}{(BUC(\jz),\tau_{co})}{f}{<x,f>}
$$
is continuous, and
$$
\Funk{i_T}{(BUC(\jz),\tau_{co})}{(BUC(\jz),\sigma(BUC(\jz),L^1(\jz))}{f}{f.}
$$

Hence, we have the mapping
$$
\funk{i_A \circ T_x}{(BUC(\jz,X^*),\tau_{w^*-co}^{\mathcal{A}})}{(BUC(\jz),\sigma(BUC(\jz),L^1(\jz)),}
$$
and therefore,
$$
\funk{id}{(BUC(\jz,X^*),\tau_{w^*-co}^{\mathcal{A}})}{(BUC(\jz,X^*),\sigma(BUC(\jz,X^*),L^1(\jz,X))}.
$$
\end{proof}

After this comparison, we are ready to verify the first compactness result,
which introduces the provided theory.

\begin{lem} \label{BUC-X-star-is-dual-representation}
Let $X$ be a Banach space and $f\in BUC(\rep,X^*)$; then,
$$
\overline{ac}^{\sigma(BUC(\re,X^*),L^1(\rep,X))}\bk{f_t:t\in \rep} \subset BUC(\rep,X^*)
$$
is compact, and the translations are dual representations on $BUC(\re,X).$
Therefore, independent of the a.p., we have (a not necessarily unique splitting)
$$ BUC(\rep,X^*)=BUC(\rep,X^*)_a\oplus BUC(\rep,X^*)_0,$$
and this splitting is nontrivial because
$$
AP(\re,X^*)\subset BUC(\re,X^*)_a, \mbox{ and } \ W_0(\re,X^*)\subset BUC(\re,X^*)_0.
$$
\end{lem}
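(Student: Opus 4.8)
The statement splits into three assertions: (i) the absolutely convex $\sigma(BUC(\re,X^*),L^1(\rep,X))$-closed hull of an orbit $\{f_t : t \in \rep\}$ is compact and contained in $BUC(\rep,X^*)$; (ii) consequently the translation semigroup is a dual representation on $BUC(\re,X)$, so Prop.~\ref{first-results}(5) yields the splitting; (iii) the splitting is nontrivial, which amounts to checking the two inclusions $AP(\re,X^*) \subset BUC(\re,X^*)_a$ and $W_0(\re,X^*) \subset BUC(\re,X^*)_0$. The plan is to treat the topological core first, then read off the algebraic consequences.

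For (i) the strategy is to identify the right dual pairing. By the preceding proposition (with $S = \re$ or $\rep$ and the Lebesgue measure, which is translation invariant and for which $\sup_{g \in B_{L^1}}\int |fg| = \|f\|_\infty$ holds), we have $L^1(\rep,X)^* = (L^1(\rep)\otimes_\pi X)^* = L(L^1(\rep),X^*)$, and $BUC(\rep,X^*)$ embeds isometrically into this dual space via $\iota$. Thus the unit ball of $BUC(\rep,X^*)$ sits inside a ball of a dual space, and by Alaoglu the $\sigma(BUC(\rep,X^*),L^1(\rep,X))$-closure of any bounded set is $w^*$-compact in $L(L^1(\rep),X^*)$. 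The one genuine point to verify is that this closure stays inside $BUC(\rep,X^*)$ rather than escaping to a merely measurable limit: here one uses that the orbit $\{f_t\}$ is equi-uniformly-continuous (all translates of a single $f \in BUC$ share the modulus of continuity of $f$) and equibounded, so that $w^*$-limits of absolutely convex combinations remain uniformly continuous with the same modulus; Lemma~\ref{pointwise-weak-star-stronger-than-L1} lets one compare the $\sigma(BUC,L^1)$-convergence with pointwise $w^*$-convergence on the equicontinuous set, where the Arzelà--Ascoli-type argument applies. This gives both the compactness and the containment claimed.

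For (ii), once $\overline{ac}^{\sigma(BUC,L^1)}\{f_t : t \in \rep\} \subset BUC(\rep,X^*)$ for every $f$, the four conditions of Definition~\ref{defi-dual-representation} are checked directly: (1) $T(s+t) = T(s)T(t)$ is the semigroup law for translations; (2) $s \mapsto \langle g, T(s)f\rangle$ is continuous because $f$ is uniformly continuous and $g \in L^1$ (a standard $\varepsilon/3$ argument using density of step functions, as in Lemma~\ref{pointwise-weak-star-stronger-than-L1}); (3) is precisely the containment just established; (4) the translation operators are $w^*$-$w^*$-continuous on the orbit closure because translation is an isometry of $L^1(\rep,X)$ (the map $V(t)$ in the preceding proposition), hence $T(t)$ is the restriction of a genuine dual operator and is automatically $w^*$-$w^*$-continuous on all of $BUC(\rep,X^*)$. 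Applying Prop.~\ref{first-results}(5) with $Y = BUC(\rep,X^*)$ then gives the (generally non-unique) splitting $BUC(\rep,X^*) = BUC(\rep,X^*)_a \oplus BUC(\rep,X^*)_0$.

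For (iii), the inclusion $AP(\re,X^*) \subset BUC(\re,X^*)_a$ follows because for an almost periodic $f$ the orbit $\{f_t\}$ is relatively norm-compact, hence relatively $\sigma(BUC,L^1)$-compact, so every translation net has a further subnet returning $f$ to within $\varepsilon$; concretely, almost periodicity gives relatively dense return times $\tau$ with $\|f_\tau - f\|_\infty$ small, which lets one construct, for any net $(s_\alpha)$, a net $(t_\gamma)$ with $w^*\text{-}\lim_\gamma w^*\text{-}\lim_\alpha T(t_\gamma)T(s_\alpha)f = f$, i.e. $f$ is reversible, and reversibility places it in the reversible summand (by Prop.~\ref{rev-equals-flight-equals-zero} and the construction of the splitting in Prop.~\ref{first-results}). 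For $W_0(\re,X^*) \subset BUC(\re,X^*)_0$: by definition an element of $W_0$ admits a sequence $t_n$ with $f_{t_n} \to 0$ weakly in $C_b$, in particular $\sigma(BUC,L^1)$, so $f$ is a flight vector and lands in the flight summand. I expect the main obstacle to be step (i), specifically the verification that the abstract $w^*$-closure does not leave $BUC$ — this is where equicontinuity of the orbit must be combined carefully with the identification of the $\sigma(BUC,L^1)$-topology via Lemma~\ref{pointwise-weak-star-stronger-than-L1}, rather than any formal manipulation; everything afterward is bookkeeping against Definitions~\ref{defi-dual-representation} and the already-proved Prop.~\ref{first-results}.
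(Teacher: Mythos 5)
Your proposal is correct and follows essentially the same route as the paper: the core of both arguments is to extract a pointwise $w^*$-convergent subnet of absolutely convex combinations of translates (Tychonov/Alaoglu), use the shared modulus of continuity plus $w^*$-lower semicontinuity of the norm to see the limit stays in $BUC(\rep,X^*)$, upgrade pointwise convergence to $w^*$-compact-open convergence by equicontinuity, and then invoke Lemma \ref{pointwise-weak-star-stronger-than-L1} to land in the $\sigma(BUC(\re,X^*),L^1(\re,X))$-topology before applying Prop.~\ref{first-results}. Your parts (ii) and (iii) spell out verifications the paper leaves implicit, and they are consistent with its intent.
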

\begin{proof}
Let $\bk{t^i_{\gamma}}_{\gamma\in\Gamma,i\in\za}\subset \re,$  $\bk{\la^i_{\gamma}}_{\gamma\in\Gamma,i\in\za}\subset \ce,$ and $\net{n}{\gamma}{\Gamma}\subset \za,$ with $\sum_{i=1}^{n_{\gamma}}\btr{\la^i_{\gamma}}=1.$

Using
$$
\bk{\bk{\sum_{1}^{n_{\gamma}}\la^i_{\gamma}f(t+t_i^{\gamma})}_{t\in\re}}_{\gamma\in\Gamma}\subset \Pi_{t\in\re} \
\fk{\overline{ac}^{w^*}\bk{f(t):t\in\re},w^*},
$$
by Tychonov's theorem, we find a subnet converging pointwise $w^*$ to a bounded function element $\tilde{g},$
and the lower semicontinuity of the $w^*$-topology of $X^*$ gives us that $\tilde{g}$ is uniformly continuous with the modulus of $f.$ Hence, by equicontinuity, this net is $\tau_{w^*-co}$-convergent.
The previous lemma \ref{pointwise-weak-star-stronger-than-L1} gives
$$
\sigma(BUC(\re,X^*),L^1(\re,X))-\netlim{\gamma}{\Gamma}\sum_{1}^{n_{\gamma}}\la^i_{\gamma}f(t+t_i^{\gamma})=\tilde{g}\in \overline{acO(f)}^{w^*}.
$$
Consequently, $\tilde{g}\in L^1(\re,X)^{\sun}\cap BUC(\re,X^*),$ and Prop. \ref{first-results}	concludes the proof.

\end{proof}

\subsection{The setting for vector-valued functions}

\begin{lem} \label{X-sep-implies-BUC-weak-star-metrizable}
If $X$ is separable, then $(BUC(\re,X),\sigma(BUC(\re,X),L^1(\re,X^*)),$ and every \\ $\sigma(BUC(\re,X),L^1(\re,X^*)-$compact subset is metric.
\end{lem}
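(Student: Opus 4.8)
The plan is to reduce metrizability of the weak topology $\sigma(BUC(\re,X),L^1(\re,X^*))$ on bounded sets to the classical fact that the weak* topology on a bounded subset of a dual Banach space is metrizable when the predual is separable. The key observation is that $BUC(\re,X)$ sits inside the dual space $L^1(\re,X^*)^* = L(L^1(\re),X^{**})$ (or, more precisely, pairs with $L^1(\re,X^*)$ via $\langle g,f\rangle = \int_\re \langle g(r),f(r)\rangle\,dr$), so the topology in question is the restriction to $BUC(\re,X)$ of a weak* topology on $L^1(\re,X^*)^*$. A bounded set $K\subset BUC(\re,X)$ is therefore a bounded subset of that dual. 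Hence it suffices to show that the predual $L^1(\re,X^*)$ — or at least a subspace of it whose induced topology on $K$ agrees with $\sigma(BUC(\re,X),L^1(\re,X^*))$ — is norm separable.

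First I would argue that $L^1(\re,X^*)$ fails to be separable in general (since $X^*$ need not be), so one cannot invoke separability of the full predual directly; instead the plan is to find a \emph{countable separating subset}. Concretely, since $X$ is separable we may pick a countable norm-dense set $\{x_n\}\subset X$ and, by the Hahn--Banach theorem, a countable set $\{x_n^*\}\subset X^*$ that is weak* dense in $B_{X^*}$ and norming for $X$; then the family of simple functions $\sum_i \chi_{K_i}\otimes x_{n_i}^*$ with $K_i$ ranging over a countable generating ring of Borel sets (e.g.\ finite unions of rational intervals) and $n_i\in\za$ forms a countable subset $D\subset L^1(\re,X^*)$. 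The heart of the argument is then: (i) $D$ separates points of $BUC(\re,X)$ — if $\langle \varphi\otimes x_n^*, f\rangle = 0$ for all rational intervals and all $n$, then $\langle x_n^*, f(t)\rangle = 0$ for a.e.\ $t$, hence (by continuity of $f$ and density of rationals) for every $t$, hence $f(t)=0$ since $\{x_n^*\}$ is norming; and (ii) on the \emph{bounded} set $K$, the topology $\sigma(BUC(\re,X),D)$ coincides with $\sigma(BUC(\re,X),L^1(\re,X^*))$, because $\overline{\mathrm{span}}^{\,\|\cdot\|}D$ is weak*-dense (being separating) in a norming manner and, $K$ being bounded, testing against a norm-dense subset of a norming subspace suffices. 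Then $(K,\sigma)$ is a bounded subset of a dual space with a countable separating family of functionals, hence metrizable; an explicit metric is
$$
d(f,g) = \sum_{k=1}^{\infty} 2^{-k}\,\frac{\bigl|\langle h_k, f-g\rangle\bigr|}{1+\bigl|\langle h_k,f-g\rangle\bigr|},
$$
where $\{h_k\}$ is an enumeration of $D$, exactly in the style of Lemma \ref{general-metric-in-X-star}.

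The main obstacle I anticipate is step (ii): verifying that on a bounded set the coarser topology generated by the countable family $D$ agrees with the full $\sigma(BUC(\re,X),L^1(\re,X^*))$. One must check that an arbitrary $g\in L^1(\re,X^*)$ can be approximated, \emph{uniformly over the bounded set $K$}, in the pairing by elements of $\mathrm{span}\,D$; this is where one uses that simple $X^*$-valued functions are $L^1$-dense, that $\|f\|_\infty\le M$ on $K$ bounds $|\langle g-\psi,f\rangle|\le M\|g-\psi\|_1$, and that each simple function $\sum\chi_{K_i}\otimes x_i^*$ can in turn be handled by replacing $x_i^*$ with nearby $x_n^*$ — but "nearby" here is only in the weak* sense, so one actually needs $\langle x_i^*-x_n^*,\, f(t)\rangle$ small uniformly for $f(t)$ ranging over a bounded (hence weak*-precompact-tested) set, which requires a little care and is essentially the same maneuver carried out in Lemma \ref{general-metric-in-X-star} and Lemma \ref{pointwise-weak-star-stronger-than-L1}. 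Modulo that point, the remaining verifications — separation, the metric formula, and the fact that a $\sigma$-compact subset is then automatically compact metrizable since the two topologies coincide on it — are routine. As a shortcut, one may alternatively observe that $L^1(\re,X^*)$-compact subsets of $BUC(\re,X)$ are norm-bounded, embed $BUC(\re,X)\hookrightarrow C_b(\re,X)\hookrightarrow (L^1(\re)\widehat{\otimes}_\pi X^*)^*$-type duals, and invoke Lemma \ref{general-metric-in-X-star} directly with the roles of $X$ and $X^*$ interchanged after noting separability of $X$ propagates to separability of the relevant predual piece.
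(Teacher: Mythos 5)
Your kernel is the same as the paper's: build a countable separating family in $L^1(\re,X^*)$ from a sequence $\{x_m^*\}$ that is $w^*$-dense in $B_{X^*}$ (available because $X$ is separable) tensored with a dense sequence in $L^1(\re)$, check that it separates points of $BUC(\re,X)$ via continuity of $f$ and totality of $\{x_m^*\}$, and metrize with the usual $\sum 2^{-k}|\langle h_k,\cdot\rangle|/(1+|\langle h_k,\cdot\rangle|)$ formula. That part is fine and matches the paper.

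The problem is what you designate as the ``heart of the argument'': step (ii), the claim that $\sigma(BUC(\re,X),D)$ coincides with $\sigma(BUC(\re,X),L^1(\re,X^*))$ on \emph{bounded} sets. This is both unnecessary and, by the route you indicate, not attainable. To approximate $\langle \chi_{K}\otimes x^*, f\rangle$ uniformly over a norm-bounded set of $f$ you would need $\sup_{\|y\|\le M}|\langle x^*-x_n^*,y\rangle| = M\|x^*-x_n^*\|$ to be small, i.e.\ you would need $\{x_n^*\}$ to be \emph{norm}-dense in $B_{X^*}$; but separability of $X$ only gives $w^*$-density, and $X^*$ need not be norm separable. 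So $\overline{\mathrm{span}}^{\|\cdot\|}D$ is in general a proper closed subspace of $L^1(\re,X^*)$, and the two topologies need not agree on bounded sets. The maneuver of Lemma \ref{pointwise-weak-star-stronger-than-L1} does not rescue this: that lemma only compares topologies in one direction. Fortunately the lemma you are proving only concerns \emph{compact} subsets, and there the argument is automatic and is exactly what the paper does: the countable separating family generates a coarser Hausdorff, metrizable topology $\tau_{pw-d}$; the identity from $(K,\sigma(BUC(\re,X),L^1(\re,X^*)))$ onto $(K,\tau_{pw-d})$ is a continuous bijection from a compact space to a Hausdorff space, hence a homeomorphism, so $K$ is compact metrizable. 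You mention this at the very end as ``routine,'' and it is -- but it is the whole proof, not an afterthought; step (ii) on bounded sets should be dropped.
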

\begin{proof}
Due to the compact metrizability of $B_{X^*},$ let $\seq{x^*}{m}$ be a dense sequence. Additionally, choose $\seq{vp}{n}$ dense in $L^1(\re).$
If $f\in BUC(\re,X)$ is such that $\int_{\re}x_m^*(f(t))\vp_n(t)dt=0$ for all $n,m\in \za,$ then, by the denseness of $\seq{\vp}{n},$ we have $x_m(f(t))=0$ for $m\in \za,$ and therefore $f=0.$ Let $\tau_{pw-d}$ be the topology induced by this countable set of elements; then, every compact subset becomes metric with respect to this coarser topology.
\end{proof}

\begin{pro} \label{BUC_closed_in_Dual}
Let $\bk{T(t)}_{t\ge 0}$ be the translation semigroup on $L^1(\re,X^*)$ and $L^1(\re,X)$. Then,
$$
\Funk{i_1}{BUC(\re,X)}{L^1(\re,X^*)^{\sun}}{f}{\bk{g\mapsto \int_{\re}<g(\tau),f(\tau)>d\tau}}
$$
and
$$
\Funk{i_2}{BUC(\re,X^*)}{L^1(\re,X)^{\sun}}{f}{\bk{g\mapsto \int_{\re}<g(\tau),f(\tau)>d\tau}}
$$

are isometries ($\nri{f}=\sup_{g\in B_{L^1(\re,X^*)}}
\btr{\int_{\re}<g(\tau),f(\tau)>d\tau})$. Consequently, every uniformly closed subspace of $BUC(\re,X)$ is a closed subspace of $(L^1(\re,X^*)^*,\nrm{\cdot})$ and $ BUC(\re,X^*)\subset L^1(\re,X^*)^*$.
\end{pro}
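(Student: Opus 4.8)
The statement to prove is Proposition \ref{BUC_closed_in_Dual}: the two maps $i_1, i_2$ into the respective sun-duals are isometries, and consequently uniformly closed subspaces of $BUC(\re,X)$ (resp.\ $BUC(\re,X^*)$) sit isometrically inside $L^1(\re,X^*)^*$ (resp.\ are contained in $L^1(\re,X)^*$). The plan is to first establish the isometry claim, since everything else is a formal consequence of it together with the duality $L^1(\re,Z)^* \cong L(L^1(\re),Z^*) \cong L^\infty_{w^*}(\re,Z^*)$ (the space of $w^*$-measurable essentially bounded functions), valid for any Banach space $Z$. So the core is the identity
$$
\nri{f} \;=\; \sup_{g\in B_{L^1(\re,X^*)}}\Bigl|\int_{\re}\langle g(\tau),f(\tau)\rangle\,d\tau\Bigr|
$$
for $f\in BUC(\re,X)$, and its analogue for $f\in BUC(\re,X^*)$ with test functions $g\in B_{L^1(\re,X)}$.

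\textbf{Key steps.} First I would prove the inequality $"\ge"$, which is immediate: for $g\in B_{L^1(\re,X^*)}$ one has $|\int\langle g(\tau),f(\tau)\rangle\,d\tau| \le \int \nrm{g(\tau)}\,\nrm{f(\tau)}\,d\tau \le \nri{f}\,\nrm{g}_1 \le \nri{f}$, so $i_1$ is norm-nonincreasing. Second, for the reverse inequality, fix $\ep>0$ and pick $t_0\in\re$ with $\nrm{f(t_0)}_X > \nri{f}-\ep$; by Hahn--Banach choose $x^*\in X^*$ with $\nrm{x^*}=1$ and $\langle x^*, f(t_0)\rangle > \nri{f}-\ep$. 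Now use uniform continuity of $f$: there is $\delta>0$ with $\nrm{f(t)-f(t_0)}_X < \ep$ for $|t-t_0|<\delta$, hence $\langle x^*, f(t)\rangle > \nri{f}-2\ep$ on that interval. Take $g(\tau) := \frac{1}{2\delta}\chi_{[t_0-\delta,t_0+\delta]}(\tau)\,x^*$, which lies in $B_{L^1(\re,X^*)}$ since $\nrm{g}_1 = \frac{1}{2\delta}\cdot 2\delta\cdot\nrm{x^*} = 1$; then $\int\langle g(\tau),f(\tau)\rangle\,d\tau = \frac{1}{2\delta}\int_{t_0-\delta}^{t_0+\delta}\langle x^*,f(\tau)\rangle\,d\tau > \nri{f}-2\ep$. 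Letting $\ep\to 0$ gives $"\le"$, so $i_1$ is an isometry. Third, the argument for $i_2$ is the same in spirit but one must be slightly careful: given $f\in BUC(\re,X^*)$ and $t_0$ with $\nrm{f(t_0)}_{X^*}$ near $\nri{f}$, one chooses $x\in B_X$ (not a functional) with $\langle x, f(t_0)\rangle$ near $\nrm{f(t_0)}_{X^*}$ — this is possible by the very definition of the norm on $X^*$ — and sets $g(\tau) := \frac{1}{2\delta}\chi_{[t_0-\delta,t_0+\delta]}(\tau)\,x \in B_{L^1(\re,X)}$; uniform continuity of $f$ in the $X^*$-norm finishes it exactly as before. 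Fourth, the ``consequently'' part: $L^1(\re,X^*)^* \cong L(L^1(\re),X^{**})$ and $L^1(\re,X)^* \cong L(L^1(\re),X^*)$, so the images of $i_1, i_2$ land in these dual spaces; a uniformly closed subspace $M\subset BUC(\re,X)$ maps under $i_1$ isometrically onto a subspace of $L^1(\re,X^*)^*$, and an isometric image of a complete space is complete, hence norm-closed. The inclusion $BUC(\re,X^*)\subset L^1(\re,X^*)^*$ follows because $i_2$ already realizes $BUC(\re,X^*)$ inside $L^1(\re,X)^{\sun}\subset L^1(\re,X)^*$, and one identifies $L^1(\re,X)^* \cong L^1(\re,X^*)^*$ only when $X$ is reflexive — so more precisely the cleanest reading is that $BUC(\re,X^*)$ embeds isometrically in $L^1(\re,X)^*$ via $i_2$ (which is what the displayed inclusion should assert).

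\textbf{Main obstacle.} The genuine work is entirely in the lower bound step, and the one subtlety worth care is that $f$ need not attain its sup-norm (the domain $\re$ is noncompact), so one argues with the $\ep$-approximation $\nrm{f(t_0)} > \nri{f}-\ep$ rather than an exact maximizer; uniform continuity — not merely continuity — is exactly what lets a single point $t_0$ control an interval of fixed positive length $2\delta$, making the normalized indicator $g$ a legitimate unit vector in $L^1$. I expect no measurability difficulties since $g$ is a simple function in both cases. The reflexivity caveat in the final sentence is the only place where the statement as literally typeset ($L^1(\re,X)^* \cong L^1(\re,X^*)^*$ implicitly) is slightly loose; I would phrase the conclusion as ``$i_2$ embeds $BUC(\re,X^*)$ isometrically into $L^1(\re,X)^*$,'' which is what the proof actually delivers.
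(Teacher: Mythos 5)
Your proposal is correct and follows essentially the same route as the paper: an approximate maximizer $t_0$ (the paper uses a sequence $t_m$ with $\nrm{f(t_m)}\to\nri{f}$), a Hahn--Banach norming functional (resp.\ a norming vector $x_m\in B_X$ for the $i_2$ case), and a unit-mass $L^1$ test function concentrated near the maximizer whose pairing with $f$ is controlled by uniform continuity. The only cosmetic difference is that you use a normalized characteristic function where the paper uses a smooth mollifier $\vp_{\ep}$; your remark that the final inclusion is most cleanly read as $i_2$ embedding $BUC(\re,X^*)$ isometrically into $L^1(\re,X)^*$ is a fair clarification of the statement as typeset.
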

\begin{proof}
Let $\seq{t}{m}\subset \re,$ such that $\ilm{m}\nrm{f(t_m)}=\nri{f}.$ By the Hahn-Banach
theorem, we find that $\seq{x^*}{m}\subset B_X^*$ with $x_m^*(f(t_m))=\nrm{f(t_m)}$
(or choose $\ep_1$ such that $x_m(f(t_m))=\nrm{f(t_m)} -\ep_1$). Let $\vp\in
C^{\infty}_0(\re)$ with $\int_{\re}\vp(t)dt=1.$ Then define $\vp_{\ep}(t)=\frac{1}{\ep}
\vp(\frac{t}{\ep}).$ If $g_{\ep,m}(s):=\vp_{\ep}(s-t_m)x_m^*,$ then all $g_{m,\ep} \in L^1(\re,X^*)$ have
norm one, and the claim is proved using regularization arguments for the bounded and uniformly continuous functions having the same modulus of continuity ($\bk{t\mapsto x_m^*f(t)}$).
Hence,    $\int_{\re} g_{m,\ep}fd\mu\to \nrm{f(t_m)},$ and $\int_{\re}g_{m,\ep}fd\mu\to \nrm{f(t_m)-\ep_1}.$
The second claim uses the same methods as those we used to find $\seq{x}{m}\subset B_X$ such that $\nrm{f(t_m)}-1/m\ge f(t_m)(x_m).$
\end{proof}

\begin{remk}
Considering the translation semigroup and using the topology on $BUC(\re,X)$ that comes with $L^1(\re,X^*),$ in view of Prop. \ref{BUC_closed_in_Dual},  it becomes straightforward to consider the embedding
$$
\Funk{\iota}{BUC(\re,X)}{BUC(\re,X^{**})}{f}{j_Xf.}
$$
This leads to a dual semigroup representation on $BUC(\re,X^{**})$ that brings a restriction of the sun-dual of the translation semigroup defined on $L^1(\re,X^*).$
\end{remk}

With the above view we obtain for general Banach spaces and
$$BUC_{wrc}(\re,X):=\bk{f\in BUC(\re,X):f(\re) \mbox{ weak relatively compact }}$$
\begin{cor} 
The translation semigroup on $BUC_{wrc}(\re,X)$ is a dual representation, the $weak-compact-open-$topology ($x^*f_{\gamma}$ compact open convergent) is representation compatible, and there are splittings 
$$BUC_{wrc}(\re,X)=BUC_{wrc}(\re,X)_a\oplus BUC_{wrc}(\re,X)_0.$$
\end{cor}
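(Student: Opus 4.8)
The plan is to deduce the corollary directly from the machinery already assembled, in particular Lemma \ref{BUC-X-star-is-dual-representation}, Lemma \ref{pointwise-weak-star-stronger-than-L1}, and Proposition \ref{first-results}. First I would note that for $f\in BUC_{wrc}(\re,X)$ the set $f(\re)$ is weakly relatively compact, hence bounded, so $f\in BUC(\re,X)$. Applying the embedding $\iota\colon BUC(\re,X)\to BUC(\re,X^{**})$, $f\mapsto j_Xf$, and using that $j_X$ maps the weakly relatively compact set $\overline{f(\re)}^{\,w}$ to a $\sigma(X^{**},X^{*})$-compact subset of $j_XX$, we get that the $w^*$-closure of the orbit stays inside $j_XX$; this is exactly the situation of Proposition \ref{embed-DeGli-theory} applied fibrewise, or alternatively a direct check that condition (3) of Definition \ref{defi-dual-representation} holds. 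Combined with Lemma \ref{BUC-X-star-is-dual-representation} (which gives the dual-representation structure of translations on $BUC(\re,X^{**})$ coming from $L^1(\re,X^*)$), this shows the translation semigroup on $BUC_{wrc}(\re,X)$ is a dual representation.

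Next I would establish representation compatibility of the weak-compact-open topology $\tau$, i.e. the topology for which $f_\gamma\to f$ iff $x^*f_\gamma\to x^*f$ compact-uniformly for all $x^*\in X^*$. The key point is that on the relevant orbit closures this $\tau$ agrees with $w^*$. For this I would invoke Lemma \ref{pointwise-weak-star-stronger-than-L1}: on bounded sets in $BUC(\re,X^{**})$ the $w^*$-compact-open topology is finer than $\sigma(BUC(\re,X^{**}),L^1(\re,X^*))$, while conversely, since each $f(\re)$ (and hence the whole orbit pointwise) lies in a fixed weakly compact set of $X$, on that set the weak topology of $X^{**}$ restricted to $j_XX$ is $\sigma(X,X^*)$, so the $w^*$-pointwise convergence is governed by $X^*$; together with equicontinuity (uniform modulus of continuity inherited from $f$, as in the proof of Lemma \ref{BUC-X-star-is-dual-representation}) pointwise convergence upgrades to compact-open convergence. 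Thus the identity map from $(\overline{ac\,O(f)}^{w^*},w^*)$ to the same set with $\tau$ is a homeomorphism, which is precisely the definition of representation compatibility.

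With these two facts in hand, the splitting $BUC_{wrc}(\re,X)=BUC_{wrc}(\re,X)_a\oplus BUC_{wrc}(\re,X)_0$ follows from Proposition \ref{first-results}(5) applied to this dual representation, once one checks the splitting respects the subspace $BUC_{wrc}(\re,X)$: the reversible and flight summands of an $f$ with weakly relatively compact range again have weakly relatively compact range, because they are $\tau$-limits (equivalently $w^*$-limits on bounded sets) of absolutely convex combinations of translates of $f$, all of whose ranges lie in the fixed weakly compact absolutely convex hull $\overline{aco}\,f(\re)$, and this hull is weakly compact by Krein's theorem; $w^*$-closedness of that hull inside $X^{**}$ then keeps the limit fibres inside it.

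The main obstacle I expect is the last point — verifying that the abstract splitting $Y=Y_a\oplus Y_0$ coming from Proposition \ref{first-results} actually restricts to the subspace $BUC_{wrc}(\re,X)$ rather than merely to all of $BUC(\re,X^{**})$. This requires controlling the ranges of the projected functions, which hinges on Krein's theorem (weak compactness of the closed absolutely convex hull of a weakly compact set) together with the fibrewise stability of that hull under the $w^*$-limits defining the idempotent $P$; one must be careful that $P$ acts by a genuine $w^*$-$w^*$ operation on the orbit closure, which is guaranteed here precisely because the orbit closure sits inside $j_XX$ where $w^*$ and weak coincide on the relevant bounded set. Everything else is a routine assembly of the cited lemmas.
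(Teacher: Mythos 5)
Your proposal is correct and takes essentially the route the paper intends: the paper states this corollary without an explicit proof, letting it follow from the preceding remark (the embedding $f\mapsto j_Xf$ of $BUC(\re,X)$ into $BUC(\re,X^{**})\subset L^1(\re,X^*)^*$), Lemma \ref{BUC-X-star-is-dual-representation}, Lemma \ref{pointwise-weak-star-stronger-than-L1}, and Proposition \ref{first-results}. You supply exactly the details the paper leaves implicit --- in particular the use of Krein's theorem to keep the $w^*$-closed absolutely convex orbit hull, and hence the reversible and flight summands, inside $BUC_{wrc}(\re,X)$, which is precisely the point the paper's subsequent example shows to fail without weak relative compactness of the range.
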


The next example shows that weak compactness is essential.

\begin{exa} 
In general $\overline{\bk{f_t:t\in \rep}}^{\sigma(BUC(\rep,X),L^1(\rep,X^*))}\subset BUC(\rep,X) $ is not given. Therefore let $X^*$ separable,
$x^{**}\in X^{**}\backslash X$ and $\seq{x}{n}\subset X$ with $w^*-\ilm{n}x_n=x^{**}.$ With this setting we define
$$
\begin{array}{l|l}
\Funk{\gamma}{\rep}{\rep}{t}
{ \left\{\begin{array}{rcl} 
-\btr{4(t-\frac{1}{4})}+1     &:& t\in [0,\frac{1}{2}] \\
0     &:& \mbox{otherwise},
\end{array}\right .} 
&
\Funk{f}{\rep}{X}{t}{\sum_{l=1}^{\infty}\gamma(t-l+\frac{1}{4})x_l }
\end{array}
$$
Then $w^*-\ilm{t}f_t =g_{\tau},$ for all $s\in\rep,$ and $\tau\in[0,1),$ with
$$
\Funk{g}{\rep}{X^{**}}{t}{x^{**}\sum_{l=1}^{\infty}\gamma(t-l+\frac{1}{4})}
$$
which is periodic with the period $1,$ and therefore an element in $BUC(\rep,X^{**})_a,$ and fails to be an element of $BUC(\rep,X).$ As for some net $\net{t}{\gamma}{\Gamma}\subset \rep,$ we have  $Qf=\netlim{\gamma}{\Gamma}T(t_{\gamma})f=g_{\tau}$ for some $\tau\in [0,1],$ we complete the example.
\end{exa}

\begin{proof}

We have for $h\in L^1(\rep,X),$
\begin{eqnarray*}
\btr{\int_{\re}<(f_n-g)(s),h(s)>ds} &\le& \int_{n}^{n+\frac{1}{2}}\btr{<x_{n}-x^{**},h(s)>}ds \\
&\le& \int_{\rep}\btr{<x_{n}-x^{**},h(s)>} ds \to 0,
\end{eqnarray*}
by Lebesgue Domintaed Convergence theorem.
As $X^*$ is assumed to be separable, $X$ is, and therefore $L^1(\rep,X).$ Consequently, the $w^*-$topology coming with $L^1(\re,X)$ is metrizable on bounded sets. 
For a given sequence in $\rep,$  $t_{l}=n_{l}+\tau_{l},$ with $\tau_{l}\in [0,1].$ Note that, from the method of Proposition (\ref{BUC_closed_in_Dual}), $\bk{T(t)}_{t\ge0}$ is dual to a strongly continuous semigroup $\bk{T_*(t)}_{t\ge 0}$ on $L^1(\rep,X).$ Assuming $\tau_{l}\to \tau,$  we have
\begin{eqnarray*}
<T(t_{l})f,h>&=&<T(n_{l}+\tau_{l})f,h>=<T(n_l)f,T_*(\tau_l)h> \\
&&\to <g,T_*(\tau)h>=<T(\tau)g,h>.
\end{eqnarray*}
Hence $Qf=g_{\tau}$ for some $\tau\in[0,1),$ which fails to be an element of $BUC(\rep,X).$
\end{proof}

\begin{remk}
The missing weak compactness may get even worse, defining the function $f$ in the above setting with $\bk{x_k}_{k=1}^n\subset f([2^n,2^{n+1})),$ and $\seq{x}{n}$ dense in $B_X.$
\end{remk}

\section{Separability of Orbits}

Next, we present the main lemma providing the norm separability of the $w^*$-closure of the orbit.

\begin{lem} \label{Norm-separability-of-w-star-closure}
Let $f\in BUC(\re,X^{**}),$ $\semig{T}{\re}$ be the translation group, and $\overline{f(\re)}^{w^*}$ be norm-separable in $X^{**}$; for some $s\in \re,$ let
$$
\Funk{F}{\fk{(B_{X^*},w^*)\times (\scST,w^*)}}{\ce}{(x^*,S)}{<x^*,(Sf)(s)>}
$$
be continuous. Then, $\overline{O(f)}^{w^*}$ is separable in $BUC(\re,X^{**}).$
\end{lem}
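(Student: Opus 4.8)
The plan is to realize $\overline{O(f)}^{w^*}$ as a continuous image of a compact metrizable space and then invoke the classical fact that the continuous image of a separable metric space is separable. First I would topologize the parameter space: by Lemma \ref{general-metric-in-X-star} applied to the $w^*$-compact norm-separable set $\overline{f(\re)}^{w^*}\subset X^{**}$, the orbit closure of $f$ in the pointwise-$w^*$ sense sits inside a product $\Pi_{t\in\re}\overline{f(\re)}^{w^*}$, each factor compact metrizable. More directly, $(\scST,w^*)=\overline{\semig{T}{\re}}^{w^*}$ is $w^*$-compact by Prop. \ref{first-results}, and the orbit closure $\overline{O(f)}^{w^*}$ is the image of $\scST$ under the evaluation $S\mapsto Sf$; so it suffices to show this evaluation map, landing in $BUC(\re,X^{**})$ with its $w^*$-pointwise topology, has separable range, and for that it is enough to produce a compact metrizable space mapping continuously onto $\scST$ (equivalently, onto the range).

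The key step is to use the hypothesis that $F:(x^*,S)\mapsto <x^*,(Sf)(s)>$ is jointly continuous on $(B_{X^*},w^*)\times(\scST,w^*)$. Here $(B_{X^*},w^*)$ is compact, and once we know $\scST$ is metrizable this $F$ being continuous will let us transport metrizability to the orbit. So I would first argue $(\scST,w^*)$ is compact metrizable: the translations $T(t)$ for $t$ ranging over a countable dense subset of $\re$ give a countable subsemigroup whose $w^*$-closure is all of $\scST$ by separate $w^*$-continuity (part of the dual-representation axioms / Prop. \ref{first-results}), and using $F$ together with a countable norming subset of $B_{X^*}$ coming from the norm-separability of $\overline{f(\re)}^{w^*}$, one builds a countable family of $w^*$-continuous functions on $\scST$ that separates points — hence $(\scST,w^*)$ embeds into a countable product of intervals and is compact metrizable. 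Then $\overline{O(f)}^{w^*}=\{Sf:S\in\scST\}$; I must check that $S\mapsto Sf$ is continuous from $(\scST,w^*)$ into $BUC(\re,X^{**})$ equipped with the $w^*$-pointwise (product) topology, which follows because for each fixed $t$ and each $x^*$, $<x^*,(Sf)(t)>=<x^*,(T(t)S)f)(s')>$ for an appropriate shift — reducing evaluation at $t$ to evaluation at the fixed point $s$ via translation, where continuity is exactly the hypothesis $F$. Being a continuous image of a compact metric space, $\overline{O(f)}^{w^*}$ is then compact metrizable, in particular separable.

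The main obstacle I anticipate is bookkeeping the topology on $BUC(\re,X^{**})$: the statement asserts separability of $\overline{O(f)}^{w^*}$ as a subset of $BUC(\re,X^{**})$, and one has to be careful which topology this closure is taken in and that the resulting space is second countable (or at least separable). The delicate point is passing from "$F$ continuous at the single parameter value $s$" to control of the whole orbit function $Sf\in BUC(\re,X^{**})$: this works because $(T(t)S)f$ evaluated at $s$ equals $Sf$ evaluated at $s+t$ (translation), so joint continuity of $(x^*,S)\mapsto<x^*,(Sf)(s)>$ upgrades, via composing with the $w^*$-continuous left multiplications $S\mapsto T(t)S$, to joint continuity of $(x^*,S)\mapsto<x^*,(Sf)(t)>$ for every $t$; combined with uniform boundedness and the common modulus of continuity of all functions in $\overline{O(f)}^{w^*}$ (inherited from $f$, as in Lemma \ref{BUC-X-star-is-dual-representation}), one gets that the $w^*$-pointwise topology on $\overline{O(f)}^{w^*}$ is determined by countably many evaluations, hence metrizable, hence separable. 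A secondary technical nuisance is ensuring the countable norming set extracted from $\overline{f(\re)}^{w^*}$ (via Lemma \ref{general-metric-in-X-star}) genuinely separates points of all the relevant evaluations; this is handled exactly as in the proof of Lemma \ref{general-metric-in-X-star}.
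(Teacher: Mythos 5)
There is a genuine gap: your argument delivers separability of $\overline{O(f)}^{w^*}$ only for the $w^*$-pointwise topology, but the lemma needs \emph{norm} (sup-norm) separability in $BUC(\re,X^{**})$ --- that is how it is used in Theorem \ref{main-separable-AP-Thm} to verify hypothesis (1) of Theorem \ref{separable-X_uds}, which explicitly asks for norm separability of the orbit closure, and the paper's own proof ends with the sup-norm estimate $\nri{f_i-g}\le\ep$. Realizing the orbit closure as a continuous image of a compact metrizable space in the $w^*$ sense cannot bridge this: a $w^*$-compact, $w^*$-metrizable set need not be norm separable (the unit ball of $\ell^{\infty}=(\ell^1)^*$ in its $w^*$-topology is the standard example), and the norm-separable range $\overline{f(\re)}^{w^*}$ plus a common modulus of continuity still does not force sup-norm separability of an equicontinuous family of functions on the noncompact line. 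So the final sentence of your proposal, ``compact metrizable, in particular separable,'' proves the wrong kind of separability.

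The paper's route supplies exactly the missing upgrade. It defines the isometry
$j:\overline{O(f)}^{w^*}\to C\bigl((B_{X^*},w^*)\times\scST\bigr)$, $g\mapsto\bigl((x^*,S)\mapsto\dual{x^*,(Sg)(s)}\bigr)$, with $\nri{jg}=\nri{g}$ (the continuity hypothesis on $F$ is what puts $jg$ into $C(K)$ for the compact Hausdorff $K=(B_{X^*},w^*)\times\scST$, and the semigroup identity $\dual{x^*,(T(t)g)(s)}=\dual{x^*,g(s+t)}$ recovers the full sup norm from evaluation at the single point $s$). It then shows $j(\overline{O(f)}^{w^*})$ is separable for the pointwise topology, using the norm-separability of $\overline{f(\re)}^{w^*}$, and invokes Namioka's Lemma \ref{separability-lemma}: a closed bounded pointwise-separable subset of $C(K)$, $K$ compact Hausdorff, is norm separable. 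That lemma is the essential ingredient your proposal lacks; without it (or an equivalent fragmentability-type argument) the passage from $w^*$-separability to norm separability fails. A secondary point: you do not actually need $(\scST,w^*)$ to be metrizable --- and it need not be, since your separating family only sees the action on $f$ at $s$ --- the paper only uses that it is compact Hausdorff.
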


\begin{proof}
By Lemma \ref{BUC-X-star-is-dual-representation}, the semigroup of translations on $BUC(\re,X^{**})$ is a dual representation; therefore, let $\scST$ be the $w^*-$closure of the translations in $L(BUC(\re,X)).$ For $f\in BUC(\re,X^{**})$,
$$
\Funk{j}{\overline{O(f)}^{w^*}}{(C((B_{X^*},w^*)\times \scST))}{f}{\bk{(x^*,S)\mapsto <x^*,(Sf)(s)>}}
$$
fulfills $\nri{jf}=\nri{f}.$
By assumption, let $\overline{\seq{x}{n}}^{\nrm{\cdot}}= \overline{f(\re)}^{w^*},$ $x^*\in B_{X^*}, $ and $S\in \scST.$ Then for $\ep >0$,
$\btr{<x^*,(Sf)(s)-x_n>}\le \nrm{x^*}\nrm{(Sf)(s)-x_n}\le \ep$ for a suitable $x_n.$
Hence, $j(\overline{O(f)}^{\tau_{wco}})$ is pointwise separable, and application of Lemma \ref{separability-lemma} gives
$$
\overline{O(f)}^{w^*} \subset C((B_{X^*},w^*)\times \scST),\nri{\cdot}) \mbox{ is norm separable.}
$$
That is, there is $\seq{f}{n}\subset \overline{O(f)}^{w^*}$ such that for every $g \in \overline{O(f)}^{w^*},$
$$
\sup_{x^*\in B_{X^*},S\in\scST}<x^*,(Sf_i)(s)-(Sg)(s)>=\sup_{S\in\scSTOS}\nrm{(Sf_i)(s)-(Sg)(s)}\le \epsilon
$$
for some $i\in\za.$
Hence,
$$
\nri{f_i-g}=\sup_{t\in\re}\nrm{T(t)f_i(s)-T(t)g(s)}\le \sup_{S\in\scSTOS}\nrm{(Sf_i)(s)-(Sg)(s)}\le \epsilon,
$$
and the proof is complete.

\end{proof}

\begin{pro} \label{WW-is-a-dual-representation}
Let $\jz\in\bk{\re,\rep},$ and let there be a closed subspace $M$ with $X\subset M\subset X^{**}$.
\begin{eqnarray*}
WAP_M(\re,X^*)&:=&\bk{f\in BUC(\re,X^*): \bk{t\to <x^{**},f(t)>}\in AP(\re) \mbox{ for all } x^{**}\in M},\\
WW_M(\jz,X^*)&:=&\bk{f\in BUC(\jz,X^*): \bk{t\to <x^{**},f(t)>}\in W(\jz) \mbox{ for all } x^{**} \in M}.
\end{eqnarray*}
Define by the seminorms the topology
$$
\tau_{M-\infty}:=\bk{p_{x_1^{**},..x_n^{**}}(f):=\sup_{t\in\re, 1\le i\le n}\btr{<x^*_i,f(t)>}: \bk{x_i^{**}}_{i=1}^n\subset M}.
$$
Then, the translation semigroup on $WW_M(\jz,X^*)$ is a dual semigroup representation with a representation-compatible topology $\tau_{M-\infty},$ and  $WW(\jz,X^*)_a=WAP(\re,X^*)_{|\jz}.$
\end{pro}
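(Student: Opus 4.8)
The plan is to verify the four requirements of Definition \ref{defi-dual-representation} for the translation semigroup on $WW_M(\jz,X^*)$, then identify the representation-compatible topology, and finally transfer the deLeeuw--Glicksberg splitting of $W(\jz)$ pointwise along the separating family $M$.

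First I would set up the dual-pair structure. By Prop. \ref{BUC_closed_in_Dual}, $BUC(\re,X^*)$ embeds isometrically into $L^1(\re,X)^*$, and the translation semigroup on $L^1(\re,X)$ is strongly continuous, so its adjoint restricts to translations on $BUC(\re,X^*)$; this handles the algebraic law (1) and the scalar continuity (2) — for fixed $g\in L^1(\re,X)$ the map $s\mapsto \langle g, f_s\rangle$ is continuous, and testing against $g$ of the form $\varphi\otimes x$ recovers continuity of $s\mapsto\langle x,f(\cdot+s)\rangle$ as required. For (3) I would argue that if $f\in WW_M(\jz,X^*)$ then every $w^*$-limit of translates $f_{t_\gamma}$ still lies in $WW_M(\jz,X^*)$: the pointwise $w^*$-limit is bounded, has the same modulus of continuity as $f$ by lower semicontinuity of the $w^*$-norm (as in the proof of Lemma \ref{BUC-X-star-is-dual-representation}), and for each fixed $x^{**}\in M$ the scalar function $t\mapsto\langle x^{**},f_{t_\gamma}(t)\rangle$ is a translate of an element of $W(\jz)$, whose orbit is weakly relatively compact, so the limit again lies in $W(\jz)$; hence the limit lies in $WW_M(\jz,X^*)$. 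Requirement (4) follows because the translation operators commute and are continuous for the $w^*$-compact-open topology on the (equicontinuous, pointwise-bounded) orbit closure, exactly as in Lemma \ref{BUC-X-star-is-dual-representation}.

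Next I would show $\tau_{M-\infty}$ is representation-compatible: on $\overline{ac\{f_t:t\in S\}}^{w^*}$ — an equicontinuous, uniformly bounded family of functions — pointwise $w^*$-convergence, $w^*$-compact-open convergence, and $\sigma$-uniform convergence tested against the vectors of $M$ all coincide, by the standard equicontinuity argument (a finite $\varepsilon/3$ net in $S$ controls the sup over $t$). Thus the identity map between the $w^*$ and $\tau_{M-\infty}$ closures of the absolutely convex orbit hull is a homeomorphism.

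Finally, for the splitting $WW_M(\jz,X^*)_a = WAP(\re,X^*)_{|\jz}$, I would run the argument of Prop. \ref{weak-almost-periodicity-implies-X-a}: for $f\in WW_M$ and each fixed $x^{**}\in M$, the scalar function $g_{x^{**}}:=\langle x^{**},f(\cdot)\rangle\in W(\jz)$ splits as $g_{x^{**}}=(g_{x^{**}})_a+(g_{x^{**}})_0$ under the deLeeuw--Glicksberg decomposition of $W(\jz)$, where by \cite{DeGli1} the reversible part of $W(\jz)$ is $AP(\re)_{|\jz}$ and the flight part is $W_0(\jz)$; one checks this scalar splitting is compatible with the vector splitting $f=f_a+f_0$ coming from Prop. \ref{first-results} applied to the dual representation, using that $M$ separates $X^*$ and Prop. \ref{rev-equals-flight-equals-zero} to pin down uniqueness. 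Consequently $f\in WW_M(\jz,X^*)_a$ iff $\langle x^{**},f(\cdot)\rangle\in AP(\re)_{|\jz}$ for all $x^{**}\in M$, which is the definition of $WAP_M(\re,X^*)_{|\jz}$. The main obstacle I expect is part (3) — ensuring the $w^*$-closure of the orbit stays inside $WW_M(\jz,X^*)$ rather than escaping into $BUC(\jz,X^{**})$ — since this is precisely the phenomenon the counterexample preceding Lemma \ref{Norm-separability-of-w-star-closure} warns about; here it works because the defining condition is stated against the fixed separating family $M$ and is preserved under the weakly-compact-orbit closure in $W(\jz)$.
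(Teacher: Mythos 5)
Your overall strategy --- weak relative compactness of the scalar orbits $\bk{t\mapsto <x^{**},f_t>}\subset W(\jz)$, a Tychonov-type compactness argument over the separating family $M$, and the transfer of the scalar deLeeuw--Glicksberg splitting $W(\jz)=AP(\re)_{|\jz}\oplus W_0(\jz)$ to the vector-valued splitting along the lines of Prop. \ref{weak-almost-periodicity-implies-X-a} --- is the same as the paper's, and your clause-by-clause verification of Definition \ref{defi-dual-representation} via Prop. \ref{BUC_closed_in_Dual} and Lemma \ref{BUC-X-star-is-dual-representation} fills in steps that the paper's very terse proof leaves implicit.

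There is, however, a genuine gap in your argument for the representation compatibility of $\tau_{M-\infty}$. You claim that on the bounded, equicontinuous orbit hull, pointwise $w^*$-convergence upgrades to convergence in the seminorms $p_{x^{**}}(f)=\sup_{t\in\re}\btr{<x^{**},f(t)>}$ by ``the standard equicontinuity argument (a finite $\ep/3$ net in $S$ controls the sup over $t$).'' This fails: $S=\re$ or $\rep$ is not compact, and the orbit $\bk{<x^{**},f_t>:t\in S}$ of a merely Eberlein-w.a.p. scalar function is weakly relatively compact in $C_b(\jz)$ but in general not totally bounded in sup-norm (take any nonzero $g\in W_0(\jz)$: some net of translates converges to $0$ weakly but not uniformly). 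So the finite $\ep/3$-net you invoke does not exist, and equicontinuity plus pointwise convergence only yields uniform convergence on compact subsets of $\re$, which does not control $\sup_{t\in\re}$. The mechanism the paper actually uses at this point is different: for each $x^{**}\in M$ the set $\overline{ac}^{w}\bk{<x^{**},f_t>:t\in S}$ is weakly compact in $C_b(\jz)$ (Krein--\v{S}mulian), Tychonov over the index set $M$ extracts a subnet of absolute convex combinations of translates converging coordinate-wise in these weak topologies, and the comparison of topologies on bounded sets, $\sigma(WW(\jz,X^*),L^1(\re,X))\subset\tau_{w^*-co}\subset\tau_{X-\infty}\subset\tau_{M-\infty}$ from Lemma \ref{pointwise-weak-star-stronger-than-L1}, identifies the limit and yields the compatibility statement. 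Your compatibility step needs to be replaced by (or reduced to) this weak-compactness argument; as written it is unproved. The remaining parts of your proposal, in particular the identification $WW_M(\jz,X^*)_a=WAP_M(\re,X^*)_{|\jz}$ via the pointwise scalar splitting and the separating family $M$, are consistent with the paper's intent.
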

\begin{proof}
Let $\bk{t^i_{\gamma}}_{\gamma\in\Gamma,i\in\za}\subset \re,$  $\bk{\la^i_{\gamma}}_{\gamma\in\Gamma,i\in\za}\subset \ce,$ and $\net{n}{\gamma}{\Gamma}\subset \za,$ with $\sum_{i=1}^{n_{\gamma}}\btr{\la^i_{\gamma}}=1.$

Using the weak topology on $W(\jz)$ and the fact that for weakly compact sets, the absolute convex hull is weakly compact, an application of Tychonov's theorem gives
$$
\bk{\bk{\sum_{1}^{n_{\gamma}}\la^i_{\gamma}f_{t+t_i^{\gamma}}}_{t\in\re}}_{\gamma\in\Gamma}\subset \Pi_{x\in M} \
\fk{\overline{ac}^{w}\bk{<x,f_t>:t\in\re},w},
$$
which leads to $\tau_{M-\infty}$ being a convergent subnet. Applying Lemma \ref{pointwise-weak-star-stronger-than-L1}, the compatibility of the topologies on bounded sets
$$\sigma(WW(\jz,X^*),L^1(\re,X))\subset \tau_{w^*-co}\subset \tau_{X-\infty}\subset \tau_{M-\infty}$$
concludes the proof.
\end{proof}

\begin{pro} Let $X$ be a Banach space and $\semig{T}{\re}$ be the translation group. Then,
$$WAP_X(\re,X^*)\subset BUC(\re,X^*)_a.$$
\end{pro}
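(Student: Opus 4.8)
The plan is to show that every $f \in WAP_X(\re,X^*)$ lies in the reversible part of $BUC(\re,X^*)$ under the translation group, and since by Proposition \ref{WW-is-a-dual-representation} the reversible part $WW(\re,X^*)_a$ coincides with $WAP(\re,X^*)$, it suffices to verify that the orbit conditions placing $f$ into the $a$-part are met. Concretely, I would first note that $WAP_X(\re,X^*) \subset WW_X(\re,X^*) \subset BUC(\re,X^*)$, the first inclusion because $AP(\re) \subset W(\re)$ (almost periodic scalar functions have relatively compact, hence weakly relatively compact, orbits). So $f$ sits inside the dual representation space from Proposition \ref{WW-is-a-dual-representation} with $M = X$, which carries the representation-compatible topology $\tau_{X-\infty}$.

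Next I would invoke the splitting from Proposition \ref{first-results}(5), applied to the dual representation of the translation group on $WW_X(\re,X^*)$: we have $WW_X(\re,X^*) = WW_X(\re,X^*)_a \oplus WW_X(\re,X^*)_0$, and by the final assertion of Proposition \ref{WW-is-a-dual-representation}, $WW_X(\re,X^*)_a = WAP_X(\re,X^*)$. Thus $f \in WAP_X(\re,X^*)$ means precisely $f \in WW_X(\re,X^*)_a$. To conclude $f \in BUC(\re,X^*)_a$, I would use Lemma \ref{BUC-X-star-is-dual-representation}, which gives the (possibly non-unique) splitting $BUC(\re,X^*) = BUC(\re,X^*)_a \oplus BUC(\re,X^*)_0$ for the translation representation on $BUC(\re,X^*)$ with the topology $\sigma(BUC(\re,X^*),L^1(\re,X))$. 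The key point is that the $a$-part and $0$-part are characterized through reversibility and flight-vector properties of the orbit, and these properties are preserved when passing between the coarser topology $\sigma(BUC(\re,X^*),L^1(\re,X))$ on $BUC(\re,X^*)$ and the topology $\tau_{X-\infty}$ on $WW_X(\re,X^*)$, since on the relevant orbit closures these topologies agree (the chain $\sigma(WW(\re,X^*),L^1(\re,X)) \subset \tau_{w^*-co} \subset \tau_{X-\infty}$ from the proof of Proposition \ref{WW-is-a-dual-representation} collapses on compact sets). So if $f$ is reversible in the $WW_X$-sense, it is reversible in the $BUC$-sense; a reversible vector cannot have a nonzero flight component by Proposition \ref{rev-equals-flight-equals-zero}, forcing $f \in BUC(\re,X^*)_a$.

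The main obstacle will be bookkeeping about which topology the reversibility is being tested against, and verifying that membership of $f$ in $WW_X(\re,X^*)_a$ — defined via nets in the $\tau_{X-\infty}$-closure of $WW_X(\re,X^*)$ — genuinely transfers to membership in $BUC(\re,X^*)_a$, whose defining nets range over the larger space $BUC(\re,X^*)$ with a weaker topology. One has to check that the net realizing reversibility of $f$ inside $WW_X$ still realizes reversibility when the ambient space and topology are enlarged/weakened; this follows because $\tau_{X-\infty}$ restricted to the orbit closure is stronger than $\sigma(BUC(\re,X^*),L^1(\re,X))$ restricted there, so $\tau_{X-\infty}$-convergence of the iterated limit to $f$ implies $\sigma(BUC(\re,X^*),L^1(\re,X))$-convergence to $f$. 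Hence the same net witnesses $f \in (BUC(\re,X^*))_{rev}$, and since $BUC(\re,X^*)_0$ consists of flight vectors, Proposition \ref{rev-equals-flight-equals-zero} forces the flight component of $f$ in the $BUC$-splitting to vanish, giving $f \in BUC(\re,X^*)_a$. I would close by remarking that this is exactly the inclusion claimed, and no uniqueness of the idempotent is needed.
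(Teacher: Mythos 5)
Your route is genuinely different from the paper's: you argue at the level of the vector-valued orbit (reversibility of $f$ inside $WW_X(\re,X^*)$, transferred to $BUC(\re,X^*)$ along the weaker topology), whereas the paper scalarizes. It takes the splitting $f=f_a+f_0$ from Lemma \ref{BUC-X-star-is-dual-representation}, pairs with each $x\in X$ to get $\langle x,f\rangle=\langle x,f_a\rangle+\langle x,f_0\rangle$ as a decomposition into a reversible and a flight vector for the \emph{scalar} translation semigroup, uses that $\langle x,f\rangle\in AP(\re)$ is reversible, concludes $\langle x,f_0\rangle=0$ for every $x$ via Proposition \ref{rev-equals-flight-equals-zero}, and then uses that $X$ separates $X^*$ to get $f_0=0$. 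Your transfer step is fine as far as it goes: a net witnessing the iterated limit in $\tau_{X-\infty}$ also witnesses it in $\sigma(BUC(\re,X^*),L^1(\re,X))$, so $f$ is indeed reversible in the $BUC$-sense.

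The gap is in your final step. From ``$f$ is reversible in $BUC(\re,X^*)$'' you conclude $f\in BUC(\re,X^*)_a$ on the grounds that a reversible vector cannot have a nonzero flight component, citing Proposition \ref{rev-equals-flight-equals-zero}. But that proposition applies to a vector that is \emph{itself} simultaneously reversible and a flight vector; the vector in question here is $f_0=f-f_a$, and you have not shown that $f_0$ is reversible. Reversibility is not a linear property: the nets witnessing reversibility of $f$ and of $f_a$ may be different, so their difference need not be reversible. Proposition \ref{first-results}(5) only gives $BUC(\re,X^*)_a\subset BUC(\re,X^*)_{rev}$, not the converse, and in this setting the compactification of the translation semigroup on $BUC(\re,X^*)$ is not known to be Abelian and the minimal idempotent is not unique (the lemma explicitly calls the splitting ``not necessarily unique''), so you cannot identify the reversible vectors with $P\,BUC(\re,X^*)$. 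This is precisely the difficulty the paper's scalarization avoids: for the scalar translation semigroup, deLeeuw--Glicksberg identifies the reversible E.-wap functions with $AP(\re)$, a linear subspace, so the difference $\langle x,f\rangle-\langle x,f_a\rangle$ \emph{is} reversible and Proposition \ref{rev-equals-flight-equals-zero} applies legitimately. To repair your argument you would need to show that $f_0=(I-P)f$ inherits reversibility from $f$ (for instance by proving that $P$ commutes with the compactification and acts continuously on the orbit closure, so the same witnessing net works for $Pf$), or else fall back on the scalar argument.
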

\begin{proof}
From the vector-valued splitting, we learn that $g(t)=<x,f(t)>=<x,f_a(t)>+<x,f_0(t)>,$ with $<x,f_0>$ a flight vector and $<x,f_a>$ reversible with respect to the scalar translation semigroup. Using the fact that almost-periodic functions are reversible, we find that $<x,f>-<x,f_a>=<x,f_0>$ for all $x\in X,$; therefore, $f=f_a,$ by Prop. \ref{rev-equals-flight-equals-zero}.
\end{proof}

\begin{theo} \label{main-separable-AP-Thm}
  Let $f\in WW_{X^*}(\re,X^{**})_a$ and $\overline{f(\re)}^{w^*}$ be norm-separable,
and for some $s\in \re,$ let
$$
\Funk{F}{\fk{(B_{X^*},w^*)\times (\scST,w^*)}}{\ce}{(x^*,S)}{<x^*,(Sf)(s)>}
$$
be continuous. Then, $f\in AP(\re,X^{**}).$
\end{theo}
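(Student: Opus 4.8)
The plan is to combine the harmonic-analysis machinery of Section 4 with the separability lemma of Section 7. First I would recognize that the hypotheses place us in the setting of Theorem \ref{separable-X_uds}: by Proposition \ref{WW-is-a-dual-representation}, the translation group on $WW_{X^*}(\re,X^{**})$ is a dual representation with representation-compatible topology $\tau_{X^*-\infty}$, and the weak-almost-periodicity condition $\bk{t\mapsto <x^*,f(t)>}\in W(\re)$ for all $x^*\in X^*$ is exactly the hypothesis $\bk{S\ni t\mapsto <T(t)y,x>}\in W(S)$ needed there (with $Y=WW_{X^*}(\re,X^{**})$ and $X$ replaced by $X^*$). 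Since $f\in WW_{X^*}(\re,X^{**})_a$, we are testing membership in $Y_a$, so by Theorem \ref{separable-X_uds} it suffices to establish condition (1) of that theorem, namely that $\overline{O(f)}^{\tau_{X^*-\infty}}$ is norm separable in $WW_{X^*}(\re,X^{**})$.

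Next I would invoke Lemma \ref{Norm-separability-of-w-star-closure}. The hypotheses of the present theorem --- $\overline{f(\re)}^{w^*}$ norm separable and the joint continuity of $F:(B_{X^*},w^*)\times(\scST,w^*)\to\ce$, $(x^*,S)\mapsto <x^*,(Sf)(s)>$ --- are precisely the hypotheses of that lemma. Hence $\overline{O(f)}^{w^*}$ is separable in $BUC(\re,X^{**})$. One then has to pass from separability of the $w^*$-closure in $BUC(\re,X^{**})$ to separability of the $\tau_{X^*-\infty}$-closure inside $WW_{X^*}(\re,X^{**})$; since $f\in WW_{X^*}(\re,X^{**})$ and the translations preserve this space, $O(f)\subset WW_{X^*}(\re,X^{**})$, and on bounded sets $\sigma(WW,L^1(\re,X))\subset\tau_{w^*-co}\subset\tau_{X^*-\infty}$ by the comparison established in Proposition \ref{WW-is-a-dual-representation}; a norm-dense sequence for the larger ambient space restricts to one for the orbit closure in the finer topology. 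This gives condition (1).

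With norm separability of $\overline{O(f)}^{\tau_{X^*-\infty}}$ in hand, Theorem \ref{separable-X_uds} yields $f\in WW_{X^*}(\re,X^{**})_{uds}$, i.e.\ $f$ lies in the closed span of unimodular eigenvectors of the translation group. Finally I would identify this with almost periodicity: for the scalar (or Banach-valued) translation group, an eigenvector with unimodular eigenvalue is an exponential $t\mapsto e^{i\la t}c$, so the closed span of such vectors inside $BUC(\re,X^{**})$ is exactly $AP(\re,X^{**})$ --- this is the classical Bohr/de~Leeuw--Glicksberg description, also reflected in the identity $WW(\jz,X^*)_a=WAP(\re,X^*)_{|\jz}$ from Proposition \ref{WW-is-a-dual-representation} and the relation $W(\jz)_{rev}=AP(\re)_{|\jz}$ used in the proof of Proposition \ref{weak-almost-periodicity-implies-X-a}. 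Therefore $f\in AP(\re,X^{**})$.

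\textbf{Main obstacle.} The routine part is checking that the two cited results apply verbatim; the delicate point I expect to spend the most care on is the topological bookkeeping in the middle step --- confirming that ``separable in $BUC(\re,X^{**})$ for the $w^*$-compact-open topology'' upgrades to ``norm separable relative to the representation-compatible topology $\tau_{X^*-\infty}$ on the orbit closure inside $WW_{X^*}$'' --- because Theorem \ref{separable-X_uds} requires the $\tau$-closure of the orbit to be norm separable, and one must make sure the restriction $\tau_{X^*-\infty}$ is genuinely representation-compatible here (so the $w^*$-closure and the $\tau$-closure of the bounded orbit coincide) and that the norm-dense countable set survives the passage. Once the orbit is pinned down as a norm-separable, relatively norm-compact subset of a reversible piece, the identification with $AP(\re,X^{**})$ is standard.
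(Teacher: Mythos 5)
Your proposal follows exactly the paper's own (very terse) argument: Proposition \ref{WW-is-a-dual-representation} places the translation group in the setting of Theorem \ref{separable-X_uds}, Lemma \ref{Norm-separability-of-w-star-closure} supplies the separability hypothesis, and the equivalence in Theorem \ref{separable-X_uds} (together with the identification of relatively norm-compact orbits with almost periodicity) gives the conclusion. The extra care you take with the topological bookkeeping between the $w^*$- and $\tau_{X^*-\infty}$-closures is a welcome expansion of details the paper leaves implicit, but the route is the same.
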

\begin{proof}
By Prop. \ref{WW-is-a-dual-representation},	we are in the situation of Thm. \ref{separable-X_uds}. Lemma \ref{Norm-separability-of-w-star-closure} verifies the separability condition.
\end{proof}

The next corollary seems to be in contradiction with the example given in \cite[4. p. 75]{LevitanZhikov}, but the constructed counterexample fails to be uniformly continuous. It seems that they construct on an interval $I$ a sequence of scalar-valued norm-one functions with disjoint supports and extend them to all of the reals. Then, these functions have a common period, which is exactly the length of the interval. Then, they define
$$
\Funk{f}{\re}{l^2(\za)}{t}{\bk{\phi_k(t)}_{k\in\za}.}
$$
The function is weakly almost periodic in their sense, applying \cite[XII, p.51]{AmerioProuse}, but it is not an element of $WW_{l^2(\za)}(\re,l^2(\za)).$
Let $\phi_k(t_k)=1.$ Choose $s_k$ in the boundary of $\bk{t:\phi_k(t)\not= 0},$ with
$ [s_k,t_k]\subset  \supp{\phi_k}.$ By the boundedness of $I,$ we have $t_k-s_k \to 0,$ but
$$
\nrm{f(t_k)-f(s_k)}_2=\btr{\phi_k(t_k)-\phi_k(s_k)}=\btr{1-0}=1.
$$
Hence, $f$ fails to be uniformly continuous, and therefore, as they claim, it fails to be almost periodic.
\begin{cor}
Let $f\in WW_{X^*}(\re,X)_a$, let ${f(\re)}$ be weak relatively compact,
and for some $s\in \re,$ let
$$
\Funk{F}{\fk{(B_{X^*},w^*)\times (\scST,w^*)}}{\ce}{(x^*,S)}{<x^*,(Sf)(s)>}
$$
be continuous. Then, $f\in AP(\re,X).$
\end{cor}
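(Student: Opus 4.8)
The plan is to reduce this corollary to Theorem~\ref{main-separable-AP-Thm} by verifying that all of its hypotheses are met once we know $f\in WW_{X^*}(\re,X)_a$ and $f(\re)$ is weakly relatively compact. First I would observe that, since $f(\re)\subset X$ is weakly relatively compact, its weak closure $\overline{f(\re)}^{w}$ is a weakly compact subset of $X$, and via the canonical embedding $j_X:X\to X^{**}$ we have $j_X\bigl(\overline{f(\re)}^{w}\bigr)=\overline{j_Xf(\re)}^{w^*}$, because on weakly compact subsets of $X$ the weak topology of $X$ and the $w^*$-topology of $X^{**}$ coincide (Eberlein--\v Smulian together with the fact that $j_X$ is a weak--$w^*$ homeomorphism onto its image on bounded sets). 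In particular $\overline{j_Xf(\re)}^{w^*}\subset j_XX$, so it is a weakly compact metrizable set, hence norm separable in $X^{**}$; this supplies the norm-separability hypothesis of Theorem~\ref{main-separable-AP-Thm} for the function $j_Xf\in BUC(\re,X^{**})$.

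Next I would check that $j_Xf\in WW_{X^*}(\re,X^{**})_a$. The assumption $f\in WW_{X^*}(\re,X)_a$ means, by Proposition~\ref{WW-is-a-dual-representation}, that $f$ is a reversible vector for the translation representation on $WW_{X^*}(\re,X)$, equivalently $t\mapsto\langle x^*,f(t)\rangle\in W(\re)$ for all $x^*\in X^*$ and $f=f_a$. Composing with $j_X$ changes nothing at the level of these scalar functions, since $\langle x^*,j_Xf(t)\rangle=\langle x^*,f(t)\rangle$ for $x^*\in X^*$; so $j_Xf\in WW_{X^*}(\re,X^{**})$, and because the reversible part is characterized by Proposition~\ref{weak-almost-periodicity-implies-X-a}(2) through separating functionals (here $X^*$ separates $X^{**}$ on $j_XX$ by Hahn--Banach, and the scalar functions are almost periodic by the splitting in $W(\re)$), we get $j_Xf\in WW_{X^*}(\re,X^{**})_a$.

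The remaining hypothesis is the joint continuity of
$$
F:\bigl(B_{X^*},w^*\bigr)\times\bigl(\scST,w^*\bigr)\to\ce,\qquad (x^*,S)\mapsto\langle x^*,(Sj_Xf)(s)\rangle,
$$
for some fixed $s\in\re$. Here $\scST$ is the $w^*$-closure of the translation semigroup in $L(BUC(\re,X^{**}))$. I expect this to be the main obstacle, and the point where weak relative compactness of $f(\re)$ is really used. The idea is that every $S\in\scST$ acts on $j_Xf$ as a pointwise $w^*$-limit of translates, so $(Sj_Xf)(s)\in\overline{\bk{j_Xf(s+t):t\in\re}}^{w^*}\subset j_X\bigl(\overline{f(\re)}^{w}\bigr)$, a weakly compact, hence $w^*$-compact and norm-bounded subset of $j_XX$; on this set the $w^*$-topology is metrizable and, crucially, the map $S\mapsto (Sj_Xf)(s)$ from $(\scST,w^*)$ into this compact metric set is $w^*$-continuous (it is the evaluation $\delta_s$ composed with the inclusion $\scST\hookrightarrow BUC(\re,X^{**})$, which is continuous for the $w^*$-compact-open topology, and that topology is representation-compatible by Proposition~\ref{WW-is-a-dual-representation} / Corollary after it). Then $F$ is the composition of $(x^*,S)\mapsto (x^*,(Sj_Xf)(s))$ with the evaluation pairing $\langle\cdot,\cdot\rangle:(B_{X^*},w^*)\times(K,w^*)\to\ce$ on the weakly compact bounded set $K=j_X\overline{f(\re)}^w$, and this pairing is jointly continuous precisely because $K$ is weakly (equivalently here $w^*$-)compact — one uses that on a $w^*$-compact bounded subset of a dual space the $w^*$-topology is jointly continuous with the $w^*$-topology of the predual unit ball, a standard consequence of boundedness plus an $\ep/2$ argument. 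Once $F$ is shown continuous, Theorem~\ref{main-separable-AP-Thm} applies and yields $j_Xf\in AP(\re,X^{**})$; since $j_Xf$ takes values in the closed subspace $j_XX$ and $j_X$ is an isometric embedding, we conclude $f\in AP(\re,X)$, completing the proof.
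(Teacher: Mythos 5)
Your overall strategy coincides with the paper's: the corollary is stated there without a written proof and is intended as a direct application of Theorem \ref{main-separable-AP-Thm} via the canonical embedding $j_X$, and your reduction --- $w^*$-compactness of $\overline{j_Xf(\re)}^{w^*}\subset j_XX$, norm separability, transfer of membership in the reversible part, and the final descent from $AP(\re,X^{**})$ to $AP(\re,X)$ because $j_X$ is an isometry onto a closed subspace --- is the intended route and is essentially sound.

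Two caveats. First, a minor one: norm separability of $\overline{j_Xf(\re)}^{w^*}$ does not follow from weak compactness alone (the unit ball of a nonseparable reflexive space is weakly compact yet neither norm separable nor weakly metrizable). What you actually need is that $f(\re)$ is norm separable because $f$ is norm continuous on the separable set $\re$; hence the weak closure lies in the separable closed subspace $\overline{span}\,f(\re)$, and only then do metrizability and separability follow. Second, and more seriously: your last paragraph sets out to \emph{prove} the joint continuity of $F$, and the key claim you invoke --- that the duality pairing is jointly continuous on $(B_{X^*},w^*)\times(K,w^*)$ for $K$ bounded and weakly compact --- is false. Take $X=\ell^2$ and $K=\bk{e_n:n\in\za}\cup\bk{0}$: the diagonal net $(e_n,e_n)$ converges to $(0,0)$ in $w^*\times w^*$ while $\langle e_n,e_n\rangle\equiv 1$. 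The failure of precisely this joint continuity is why the paper must work with right (left) \emph{semi}topological rather than topological semigroups throughout. Fortunately the step is not needed: the continuity of $F$ is a \emph{hypothesis} of the corollary, and since $(Sj_Xf)(s)=j_X\bigl((Sf)(s)\bigr)$ on the weakly compact orbit closure, it is literally the continuity hypothesis of Theorem \ref{main-separable-AP-Thm} for $j_Xf$. With that paragraph replaced by this one-line observation, your proof is complete.
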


\end{document}